\documentclass{article}
\usepackage{amssymb}
\usepackage{amsfonts}
\usepackage{amsmath}
\usepackage{graphicx}

\setcounter{MaxMatrixCols}{10}

\newtheorem{theorem}{Theorem}[section]
\newtheorem{corollary}[theorem]{Corollary}
\newtheorem{definition}[theorem]{Definition}

\newtheorem{lemma}[theorem]{Lemma}

\newenvironment{proof}[1][Proof]{\noindent\textbf{#1.} }{\ \hfill \rule{0.5em}{0.5em}}
\newenvironment{oldtext}[1][]{\noindent{\bf (Old text\textellipsis }}{\ \hfill  \bf{\textellipsis )}}
\newenvironment{cmtpar}[1][] {\noindent \bfseries ((}{\hfill ))\normalfont}
\newcommand{\nc}{\newcommand}
\nc{\bd}{\mathrm{bd}}
\nc{\intr}{\mathrm{int}}
\nc{\cl}{\mathrm{cl}}
\nc{\rmcite}[1]{{\rm\cite{#1}}}
\nc{\rmb}[1]{{\rm \bf #1}}
\nc{\spset}{\stackrel{\circ}{\supset}}
\nc{\sbset}{\stackrel{\circ}{\subset}}
\nc{\BOT}{\begin{oldtext}}
\nc{\EOT}{\end{oldtext}}
\nc{\BCP}{\begin{cmtpar}}
\nc{\ECP}{\end{cmtpar}}

\begin{document}

\title{Sets of tetrahedra, \\
defined by maxima of distance functions}
\author{Jo\"{e}l Rouyer \and Costin V\^{\i}lcu}
\maketitle

\begin{abstract}
We study tetrahedra and the space of tetrahedra from the viewpoint of local
and global maxima for intrinsic distance functions. \newline
\textbf{Mathematics Subject Classification (2000):} 52\textsc{A}15, 53%
\textsc{C}45. \newline
\textbf{Key words:} local maximum, intrinsic distance function.
\end{abstract}


\section{Introduction and main result}

One direction in present differential geometry is the study of
differentiable spaces with singularities, of which polyhedral convex
surfaces are simple examples. Surprisingly, even the geometry of tetrahedra
is not completely known, see for example the recent results of V. A.
Zalgaller \cite{Zalgaller}. On the other hand, the geometry of convex
polyhedra is an important part of computational geometry, see for example
the book by E. D. Demaine and J. O'Rourke \cite{Demaine_O'Rourke}.

\medskip

Consider a tetrahedron $T$ in the Euclidean space $\mathbb{R}^{3}$, endowed
with the intrinsic distance $\rho$ induced by the distance in $\mathbb{R}%
^{3} $. The metric $\rho$ is defined, for any points $x,y$ in $T$, as the
length of a \emph{segment} (\textit{i.e.}, shortest path on $T$) from $x$ to 
$y$. For a given point $x\in T$, let $\rho_{x}$ be the \textit{distance
function from} $x$, defined by $\rho_{x}(y)=\rho(x,y)$. When necessary, we
shall write $\rho^S$ to emphasize the surface $S$.

Denote by $M_{x}$ the set of local maxima of $\rho_{x}$, and by $F_{x}$ the
set of global maxima of $\rho_{x}$ (or \emph{farthest points} from $x$, or 
\emph{antipodes} of $x$).

The study of farthest points and, more generally, of local maxima for
distance functions on convex surfaces, has its origin in several questions
of H. Steinhaus, see $\S $A35 in \cite{cfg}. The paper \cite{v2} surveys
fundamental results on this subject; more recent work is due to T.
Zamfirescu, the authors and their collaborators.

\medskip

Define the following subsets of the space $\mathcal{T}$, of all the
tetrahedra in $\mathbb{R}^{3}$ up to isometry and homothety: 
\begin{equation*}
\mathcal{M}_{n}=\left\{ T\in\mathcal{T}|\exists x\in T~\#M_{x}\geq n\right\},
\end{equation*}
\begin{equation*}
\mathcal{F}_{n}=\left\{ T\in\mathcal{T}|\exists x\in T~\#F_{x}\geq n\right\},
\end{equation*}
where $\#S$ denotes the cardinality of the set $S$.

We have $\mathcal{M}_{k} \supset\mathcal{M}_{k+1} \cup\mathcal{F}_{k}$,
directly from the definitions, wherefrom the following diagram. 
\begin{equation*}
\begin{array}{rrcccccccccrr}
{\mathcal{T}} & {\supset} & {\mathcal{M}_{2}} & {\supset} & {\mathcal{M}_{3}}
& {\supset} & {\mathcal{M}_{4}} & {\supset} & {\mathcal{M}_{5}} & {\supset}
& {\mathcal{M}_{6}} & {\supset} & {...} \\ 
{} & {} & {\cup} & {} & {\cup} & {} & {\cup} & {} & {\cup} & {} & {\cup} & {}
& {} \\ 
{} & {} & {\mathcal{F}_{2}} & {\supset} & {\mathcal{F}_{3}} & {\supset} & {%
\mathcal{F}_{4}} & {\supset} & {\mathcal{F}_{5}} & {\supset} & {...} &  & 
\end{array}%
\end{equation*}

The equality $\mathcal{M}_{2}=\mathcal{F}_{2}$ holds for polyhedral convex
surfaces, see \cite{vz2}. Moreover, for polyhedral convex surfaces with $n$
vertices, we have $\mathcal{F}_{n}\neq\varnothing$ \cite{MonMien5} and $%
\mathcal{F}_{n+1}=\varnothing$ \cite{RS1}. In \cite{MonMien5} it is also
proven that $\mathcal{M}_{n+2}= \varnothing$, though this is not emphasized
as a theorem. For tetrahedra, it has been proven that $\mathcal{F}_{2}=%
\mathcal{T}$ \cite{MonMien4}.

We examine each inclusion in the above diagram and make precise the
\textquotedblleft re\-la\-tive size\textquotedblright of the subset. We
write $A\spset B$ whenever $A\supset B$ and both $B$ and $A\setminus B$ have
interior points. If $B$ is nowhere dense in $A$ (\textit{i.e.}, $\mathrm{int}%
\left( \mathrm{cl}\left(B\right) \right) =\varnothing$) we write $A\overset{%
n.d.}{\supset}B$.

\bigskip

\noindent\textbf{Theorem } \textsl{All sets $\mathcal{M}_{k}$ are open in $%
\mathcal{T}$ and we have the following diagram} 
\begin{equation*}
\begin{array}{rrcccccccccrr}
{\mathcal{T}} & {=} & {\mathcal{M}_{2}} & {\supsetneq} & {\mathcal{M}_{3}} & 
{\spset} & {\mathcal{M}_{4}} & {\spset} & {\mathcal{M}_{5}} & \overset{n.d.}{%
{\supset}} & {\mathcal{M}_{6}} & {=} & {\varnothing} \\ 
&  & {\parallel} &  & {\cup\circ} &  & {\cup}^{{n.d}} &  & {\cup\circ} &  & 
&  &  \\ 
&  & {\mathcal{F}_{2}} & {\spset} & {\mathcal{F}_{3}} & \overset{n.d.}{{%
\supset}} & {\mathcal{F}_{4}} & \overset{n.d.}{{\supset}} & {\mathcal{F}_{5}}
& = & {\varnothing.} &  & 
\end{array}%
\end{equation*}

\bigskip

We do not know if the set $\mathcal{M}_{2} \setminus\mathcal{M}_{3}$ has
interior points in $\mathcal{T}$.

Parts of this result hold as well for general polyhedral convex surfaces. We
choose to give them for tetrahedra, for the sake of a unified presentation.

The proof of our theorem is split into several lemmas, treating each
inclusion separately. In Section \ref{Sect_lmax} we study the chain of
inclusions concerning the sets $\mathcal{M}_{k}$, in Section \ref{Sect_glmax}
we treat the inclusions $\mathcal{F}_{k}\subset\mathcal{M}_{k}$, while in
Section \ref{Sect_gmax} we consider the chain of inclusions concerning the
sets $\mathcal{F}_{k}$.

Our framework is the space of tetrahedra, and our main tools are properties
of cut loci, gluings and unfoldings; they are all presented in Section \ref%
{Preliminaries}.

\bigskip

For the reader's convenience we recall below a few definitions and give
additional notation. A \emph{very acute vertex} of a polyhedral surface is a
vertex $v$ the total angle $\theta_{v}$ of which is less than $\pi$. The 
\emph{curvature} $\omega_{v}$ at the vertex $v$ of a polyhedral surface is
defined as $\omega_{v} = 2\pi - \theta_v$. A tetrahedron is called \emph{%
isosceles} if any opposite edges are equal or, equivalently, if the total
angle at every vertex is equal to $\pi$. The closure, interior, and boundary
of any subset $N$ of a topological space are respectively denoted by $%
\mathrm{cl}(N)$, $\mathrm{int}(N)$, and $\mathrm{bd}(N)$. In any metric
space, $B(x,r)$ stands for the closed ball centered at $x$ of radius $r>0$,
and $S(x,r)$ for its boundary. As usual, $||\cdot||$ denotes the standard
norm in $\mathbb{R}^{3}$.


\section{Preliminaries}

\label{Preliminaries}

In this section we briefly present some basic tools employed in the
intrinsic geometry of polyhedra. We will use them most often implicitly.


\subsection{Space of tetrahedra}

By a \textit{tetrahedron}, we mean the boundary of the convex hull of four
non-coplanar points. Let $\mathfrak{T}$ denote the space of all tetrahedra.
We endow it, as usual in convex geometry, with the Pompeiu-Hausdorff
distance $d_{PH}$. Since a tetrahedron is determined by its vertex set, $%
\mathfrak{T}$ is a quotient of an open set of $\mathbb{R}^{12}$ by the free
and properly discontinuous action of the group $S_{4}$ of permutations.

By $\mathcal{T}$ we denote the space of all tetrahedra, up to isometry and
homothety; it is endowed with the induced topology from $\mathfrak{T}$.

\begin{lemma}
\label{dim_T} There exists an open and dense set $\mathcal{O}$ in $\mathcal{T%
}$, which carries the structure of a $5$-dimensional differentiable manifold.
\end{lemma}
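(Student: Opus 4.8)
The plan is to count degrees of freedom.The plan is to count degrees of freedom and realize $\mathcal{T}$ as the quotient of a manifold by a proper action, away from a thin set of symmetric tetrahedra. The configuration space of four ordered, non-coplanar points is an open set $C\subset\mathbb{R}^{12}$, and the group producing $\mathcal{T}$ is $G=\mathrm{Sim}(\mathbb{R}^3)\times S_4$, where $\mathrm{Sim}(\mathbb{R}^3)$ consists of the maps $x\mapsto\lambda Ax+b$ with $\lambda>0$, $A\in O(3)$, $b\in\mathbb{R}^3$, and $S_4$ relabels vertices. Thus $\mathrm{Sim}(\mathbb{R}^3)$ is a Lie group of dimension $1+3+3=7$, while $S_4$ is finite, so $\dim G=7$ and the expected dimension of $\mathcal{T}=C/G$ is $12-7=5$. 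The whole point is to equip a good open dense part with an honest manifold structure.

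The first reduction is to a \emph{compact} group action, which makes properness automatic. Normalizing translation (place the centroid at the origin, $3$ conditions) and scale (require, say, the sum of the six squared edge lengths to equal $1$, one smooth condition whose differential is nonzero on non-degenerate configurations) kills the four non-compact directions of $\mathrm{Sim}(\mathbb{R}^3)$ and leaves a smooth $8$-dimensional slice $\Sigma\subset C$ on which only the compact group $O(3)\times S_4$ still acts. A key preliminary remark is that a similarity carrying a non-degenerate tetrahedron to itself must be an isometry: it permutes the four vertices, so some power of it fixes at least two of them, forcing $\lambda=1$. Consequently the stabilizer of a tetrahedron inside $G$ is exactly its group of self-isometries, and on $\Sigma$ this group sits inside $O(3)\times S_4$.

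Next I would isolate the free locus. Let $\Sigma_0\subset\Sigma$ be the set of tetrahedra whose only self-isometry is the identity. On $\Sigma_0$ the compact group $O(3)\times S_4$ acts freely, hence freely and properly, so by the quotient manifold theorem $\Sigma_0/(O(3)\times S_4)$ is a smooth manifold of dimension $8-3=5$; this is the desired set $\mathcal{O}$. Concrete local coordinates are furnished by the six edge lengths (in the locally determined ordering), since a tetrahedron with pairwise distinct edge lengths is locally reconstructible up to isometry from those lengths, and the chart-overlap maps, which merely permute the roles of the edges, are smooth. It then remains to prove that $\mathcal{O}$ is open and dense in $\mathcal{T}$, equivalently that the symmetric tetrahedra are nowhere dense: every nontrivial self-isometry forces, through the vertex permutation it induces, a nontrivial equality among the six edge lengths, so the symmetric locus is a finite union of proper real-analytic subsets and is therefore closed with empty interior.

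The main obstacle is the final step, controlling the symmetric tetrahedra while simultaneously securing smoothness of the quotient. The geometric heart is the remark that self-similarities of a non-degenerate tetrahedron are automatically isometries: it both secures properness (by letting us pass to the compact group $O(3)$) and identifies the bad set with an explicit, lower-dimensional family cut out by edge-length coincidences. Once this is in hand, the quotient manifold theorem supplies the $5$-dimensional structure and the normalized edge-length map supplies the charts.
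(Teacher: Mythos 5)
Your proposal is correct, and at its core it is the same argument as the paper's: isolate the locus of tetrahedra with no nontrivial self-isometry, observe that nontrivial symmetries force metric coincidences (so that locus's complement is closed and nowhere dense), and apply the quotient manifold theorem to a smooth, free, proper action to get a $5$-dimensional manifold structure on the quotient. The difference is in how the group action is set up. The paper lets the full $7$-dimensional, non-compact group $G$ generated by homotheties and isometries act directly on the open dense subset $\mathfrak{O}\subset\mathfrak{T}$ of tetrahedra having no two congruent faces, and simply asserts that this action is ``clearly smooth, proper, and free.'' You instead normalize centroid and scale to pass to an $8$-dimensional slice $\Sigma$ acted on by the compact group $O(3)\times S_4$; since smooth actions of compact groups are automatically proper, your route supplies a proof of exactly the point the paper waves through. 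Conversely, the paper's choice of bad set (two congruent faces, rather than your nontrivial-symmetry locus) has the advantage that it is manifestly closed and that freeness of the action on its complement is immediate from the definition. Your key observation that a self-similarity of a nondegenerate tetrahedron must be an isometry (ratio $1$) is also implicit but unstated in the paper, so making it explicit is a genuine improvement.

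One small imprecision to fix: the symmetric locus is \emph{contained in}, not equal to, the finite union of edge-equality hypersurfaces --- a tetrahedron can have two equal edges and still admit only the trivial self-isometry. Containment is all you need for nowhere-density, but you also need $\Sigma_0$ to be open (so that it is a manifold to which the quotient theorem applies), i.e., the symmetric locus to be closed, and that does not follow from being a subset of a closed set. It does follow from a one-line compactness argument: if configurations $x_n\to x$ admit nontrivial stabilizing pairs $(A_n,\sigma_n)\in O(3)\times S_4$, then after passing to a subsequence $\sigma_n=\sigma$ is a constant nontrivial permutation and $A_n\to A$ in $O(3)$, so $(A,\sigma)$ is a nontrivial stabilizer of $x$ (note that a stabilizing pair with trivial permutation must be the identity, since an orthogonal map fixing four affinely spanning points is the identity). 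With that patch, the argument is complete.
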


\begin{proof}
Notice that a tetrahedron $T$ in $\mathfrak{T}$ which is invariant under an
isometry of $\mathbb{R}^3$ has two congruent faces.

Denote by $\mathfrak{I}$ the subset of $\mathfrak{T}$ consisting of all
tetrahedra with two congruent faces, and put $\mathfrak{O}=\mathfrak{T}%
\setminus \mathfrak{I}$. Clearly, $\mathfrak{I}$ is closed and $\mathfrak{O}$
is open in $\mathfrak{T}$. Moreover, each tetrahedron $T\in \mathfrak{I}$
can easily be approximated with tetrahedra $T_{n}$, each of which has
distinct edge lengths; therefore, $\mathfrak{O}$ is dense in $\mathfrak{T}$.

The group $G$ generated by affine homotheties and affine isometries of $%
\mathbb{R}^{3}$ acts on $\mathfrak{O}$ in a natural way. The action is
clearly smooth, proper, and free by the definition of $\mathfrak{O}$. Hence $%
\mathcal{O}=\mathfrak{O}/G$ is a smooth $5$-dimensional manifold (see, 
\textit{e.g.}, Theorem 5.119 in \cite{JL}). The density of $\mathcal{O}$ in $%
\mathcal{T}$ follows from the density of $\mathfrak{O}$ in $\mathfrak{T}$.
\end{proof}

\bigskip

With some abuse of notation, we say that the dimension of $\mathcal{T}$ is $%
5 $.

\bigskip

The completions $\overline{\mathfrak{T}}$ of $\mathfrak{T}$ and $\overline {%
\mathcal{T}}$ of $\mathcal{T}$ include, beside tetrahedra, doubles of planar
convex quadrilaterals or triangles. Notice that a doubly covered triangle
appears in these complete spaces infinitely many times, once for each
position of its fourth, flat vertex.

We introduce $\overline{\mathfrak{T}}$ and $\overline{\mathcal{T}}$ for
practical reasons: we aim to construct suitable doubly covered
quadrilaterals or triangles, and afterwards to perturb their vertices to get
non-degenerate tetrahedra. Clearly, the convergence in $\overline{\mathfrak{T%
}}$ implies convergence in $\overline{\mathcal{T}}$.

\medskip

Define the following subsets of the space $\overline{\mathcal{T}}$: 
\begin{equation*}
\tilde{\mathcal{M}}_{n}=\left\{ T \in\overline{\mathcal{T}} |\exists x\in
T~\#M_{x}\geq n\right\} ,
\end{equation*}
\begin{equation*}
\tilde{\mathcal{F}}_{n}=\left\{ T \in\overline{\mathcal{T}} |\exists x\in
T~\#F_{x}\geq n\right\} ,
\end{equation*}
and notice that $\tilde{\mathcal{M}}_{k} \supset\tilde{\mathcal{M}}_{k+1}
\cup\tilde{\mathcal{F}}_{k}$ holds in $\overline{\mathcal{T}}$.

\medskip

The next classical lemma will implicitly be used for the convergence of
tetrahedra.

\begin{lemma}
\rmcite{{BCS}} \label{LIDCv} Let $S_{n}$ be a sequence of convex
surfaces, converging to $S$ with respect to the Pompeiu-Hausdorff metric.
Let $x_{n},y_{n}$ be points in $S_{n}$, converging to $x,y\in S$, and $%
\gamma_{n}$ segments on $S_{n}$ joining $x_{n}$ to $y_{n}$, converging to $%
\gamma\subset S$. Then $\gamma$ is a segment on $S$ joining $x$ to $y$, and $%
\rho^S\left( x,y\right) =\lim_{n}\rho^{S_n}\left( x_{n},y_{n}\right)$.
\end{lemma}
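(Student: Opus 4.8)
The plan is to sandwich the numbers $\rho^{S_{n}}(x_{n},y_{n})$ between $\rho^{S}(x,y)$ from both sides, thereby proving simultaneously that the limit exists and that $\gamma$ realizes $\rho^{S}(x,y)$. First I would record that the hypothesis $\gamma_{n}\to\gamma$ is Hausdorff convergence of compact subsets of $\mathbb{R}^{3}$; parametrizing each $\gamma_{n}$ proportionally to arc length, one may assume the parametrizations converge uniformly, so $\gamma$ is a rectifiable curve lying on $S$ and joining the limit endpoints $x$ and $y$. Two elementary facts then drive everything: (i) the length of a curve contained in a surface equals its ordinary Euclidean length, so lengths on $S$ and on every $S_{n}$ are measured by one and the same functional; and (ii) length is lower semicontinuous under uniform convergence of curves, whence $L(\gamma)\le\liminf_{n}L(\gamma_{n})=\liminf_{n}\rho^{S_{n}}(x_{n},y_{n})$, the middle equality holding because each $\gamma_{n}$ is a segment. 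Since $\gamma$ joins $x$ to $y$ on $S$, we also have the trivial bound $\rho^{S}(x,y)\le L(\gamma)$.

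The substantive point is the reverse (upper) estimate $\limsup_{n}\rho^{S_{n}}(x_{n},y_{n})\le\rho^{S}(x,y)$, obtained by transporting a minimizing curve of $S$ onto $S_{n}$. Fix a segment $\sigma$ on $S$ from $x$ to $y$, so $L(\sigma)=\rho^{S}(x,y)$, and fix a point $o$ in the interior of the convex body bounded by $S$; for $n$ large, $o$ also lies inside the body bounded by $S_{n}$. Radial projection $r_{n}$ from $o$ maps $S$ homeomorphically onto $S_{n}$, and because $S_{n}\to S$ in the Pompeiu--Hausdorff metric, $r_{n}$ converges uniformly to the identity and its ambient Lipschitz constant tends to $1$. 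Hence $r_{n}(\sigma)$ is a curve on $S_{n}$, joining $r_{n}(x)$ to $r_{n}(y)$, whose length (intrinsic $=$ Euclidean) is at most $(1+\varepsilon_{n})\rho^{S}(x,y)$ with $\varepsilon_{n}\to0$. Inserting the short $S_{n}$-arcs from $x_{n}$ to $r_{n}(x)$ and from $r_{n}(y)$ to $y_{n}$ and using the triangle inequality yields $\rho^{S_{n}}(x_{n},y_{n})\le(1+\varepsilon_{n})\rho^{S}(x,y)+\rho^{S_{n}}(x_{n},r_{n}(x))+\rho^{S_{n}}(r_{n}(y),y_{n})$.

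It remains to see that the two correction terms vanish, which is where I expect the main technical care: both $x_{n}$ and $r_{n}(x)$ converge to $x$ in $\mathbb{R}^{3}$, so I would invoke a uniform modulus of continuity for the intrinsic metric of convex surfaces confined to a fixed ball --- near any point a convex surface is the graph of a convex function, so ambient-close points are intrinsically close, uniformly in $n$ --- to conclude $\rho^{S_{n}}(x_{n},r_{n}(x))\to0$, and likewise for $y$. Combining the three bounds gives the chain
\begin{equation*}
\rho^{S}(x,y)\le L(\gamma)\le\liminf_{n}\rho^{S_{n}}(x_{n},y_{n})\le\limsup_{n}\rho^{S_{n}}(x_{n},y_{n})\le\rho^{S}(x,y),
\end{equation*}
so every inequality is an equality: $\rho^{S_{n}}(x_{n},y_{n})\to\rho^{S}(x,y)$ and $L(\gamma)=\rho^{S}(x,y)$, meaning $\gamma$ is a segment on $S$ from $x$ to $y$. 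The genuinely hard part is the uniform control in the second paragraph, namely that radial projection between uniformly close convex surfaces is asymptotically non-expanding in the ambient metric and that the intrinsic distances on the $S_{n}$ are equicontinuous near each point; once those uniformities are secured, the remainder is bookkeeping with the triangle inequality and the semicontinuity of length.
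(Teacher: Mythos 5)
You are attempting a statement the paper never proves: Lemma \ref{LIDCv} is simply quoted from Busemann's book \cite{BCS}, so the only question is whether your reconstruction is sound. Its first half is: equi-Lipschitz parametrizations plus lower semicontinuity of length and the coincidence of intrinsic and Euclidean length of curves give $\rho^{S}(x,y)\le L(\gamma)\le\liminf_{n}\rho^{S_{n}}(x_{n},y_{n})$ correctly. The genuine gap is the key claim of your second half, namely that the radial projection $r_{n}\colon S\to S_{n}$ has ambient Lipschitz constant tending to $1$ when $d_{PH}(S_{n},S)\to0$. Uniform convergence of $r_{n}$ to the identity is true, but the Lipschitz claim is false: the local expansion factor of a radial map is essentially the ratio of the distances from $o$ to the supporting planes at corresponding points, and Hausdorff closeness of the surfaces does not force these ratios near $1$. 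Concretely (in a planar cross-section, which you can thicken to a prism to get a genuine example in $\mathbb{R}^{3}$): let $K$ be a triangle $ABC$ with $o$ placed so that the line $AB$ is at distance $1$ from $o$ while the line $BC$ is at distance $d<1$, and let $K_{n}$ be the image of $K$ under the homothety of center $A$ and ratio $1-\frac{1}{n}$. Then $\partial K_{n}\to\partial K$, but $r_{n}$ carries the piece of $AB$ beyond the new vertex onto the edge of $K_{n}$ parallel to $BC$, expanding lengths there by a factor close to $1/d>1$ for every $n$; the Lipschitz constants stay bounded away from $1$. With only a bounded Lipschitz constant your sandwich yields $\limsup_{n}\rho^{S_{n}}(x_{n},y_{n})\le C\,\rho^{S}(x,y)$, which is not enough.

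The step can be repaired without changing your architecture. Either (a) replace the radial map by $f_{n}=\pi_{n}\circ H_{n}$, where $H_{n}$ is the homothety of center $o$ and ratio $1+\varepsilon_{n}/r$ (with $\varepsilon_{n}=d_{PH}(S_{n},S)$ and $B(o,r)$ contained in the body $K$ bounded by $S$) and $\pi_{n}$ is the nearest-point projection onto the body $K_{n}$ bounded by $S_{n}$: one checks $K_{n}\subset K+\varepsilon_{n}B\subset H_{n}(K)$, so $f_{n}$ maps $S$ into $S_{n}$, and it is honestly $(1+\varepsilon_{n}/r)$-Lipschitz because a homothety scales lengths exactly and nearest-point projection onto a convex body is $1$-Lipschitz; or (b) keep $r_{n}$ but prove only the weaker, true statement that for the fixed minimizing curve $\sigma$ one has $L(r_{n}(\sigma))\to L(\sigma)$, using that the expansion factor is bounded by a constant $C(r,R)$ and tends to $1$ at every point where $S$ is differentiable (almost every point), and then applying dominated convergence. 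Finally, your last ingredient --- the uniform modulus of continuity for $\rho^{S_{n}}$ --- is true but also needs the inradius in an essential way (for instance, cut $S_{n}$ by the plane through $o$, $p$, $q$ and bound the shorter arc of the resulting convex curve by $C(r,R)\,\Vert p-q\Vert$); ``locally a graph of a convex function'' by itself gives no uniformity in $n$, since degenerating surfaces (doubly covered domains) show the estimate genuinely fails without a lower bound on the inradius. With either repair in place, the rest of your argument is a valid proof of the lemma.
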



\subsection{Gluing and unfolding}

\label{Unfold}

One of our main tools is the Alexandrov's gluing theorem, given next (see 
\cite{code2}, p.100).

\begin{lemma}
\label{Alexandrov} \rmb{{(Alexandrov's gluing theorem)}} Consider
a topological sphere $S$ obtained by gluing planar polygons (i.e., naturally
identifying pairs of sides of the same length) such that at most $2\pi$
angle is glued at each point. Then $S$, endowed with the intrinsic metric
induced by the distance in $\mathbb{R}^{2}$, is isometric to a polyhedral
convex surface $P\subset\mathbb{R}^{3}$, possibly degenerated. Moreover, $P$
is unique up to rigid motion and reflection in $\mathbb{R}^{3}$.
\end{lemma}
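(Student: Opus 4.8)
The plan is to split the statement into an \emph{existence} assertion (the glued sphere $S$ is isometric to some, possibly degenerate, convex polyhedron $P$) and a \emph{uniqueness} assertion (rigidity up to congruence and reflection), and to treat the two by rather different machinery. For existence I would set up two moduli spaces of equal dimension and compare them through the natural ``realization'' map; for uniqueness I would invoke a Cauchy-type rigidity argument.

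First, to organize the existence proof, I would fix the number $n$ of cone points (vertices carrying positive angle deficit) produced by the gluing and introduce: (i) the space $\mathcal{P}_n$ of convex polyhedra in $\mathbb{R}^3$ with at most $n$ vertices, modulo rigid motions; and (ii) the space $\mathcal{A}_n$ of abstract convex polyhedral metrics on the sphere with at most $n$ cone points, that is, sphere metrics assembled from Euclidean pieces with at most $2\pi$ total angle glued at each point. A dimension count shows $\dim \mathcal{P}_n = \dim \mathcal{A}_n = 3n-6$: for $\mathcal{P}_n$ one has $3n$ vertex coordinates minus $6$ for the rigid motions, while for $\mathcal{A}_n$ a geodesic triangulation with the $n$ cone points as vertices has, by Euler's formula, exactly $3n-6$ edges, whose lengths (subject to triangle inequalities) locally parametrize the metric. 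The metric induced on the boundary of a polyhedron defines a continuous map $\Phi : \mathcal{P}_n \to \mathcal{A}_n$, and the given gluing $S$ is by construction a point of $\mathcal{A}_n$; existence then amounts to showing that $\Phi$ is surjective.

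I would establish surjectivity by the continuity (invariance-of-domain) method. Granting the uniqueness statement, $\Phi$ is injective; being a continuous injection between manifolds of the same dimension, it is open by invariance of domain, hence a local homeomorphism, so $\Phi(\mathcal{P}_n)$ is open in $\mathcal{A}_n$. Next I would verify that $\Phi$ is proper: a convergent sequence of metrics keeps diameters, and hence the realizing polyhedra, bounded, and a nondegenerate convex polyhedron cannot escape to infinity or collapse without its metric degenerating; properness makes $\Phi(\mathcal{P}_n)$ closed. Since $\mathcal{A}_n$ is connected and $\Phi$ attains at least one value (a regular-simplex-type metric is obviously realizable), openness together with closedness force surjectivity. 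The degenerate case, where $S$ turns out to be a doubly covered convex polygon, I would handle by approximating the flat gluing with nondegenerate ones and passing to the limit, using Lemma \ref{LIDCv} for the convergence of the metrics.

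The uniqueness (rigidity) part is where I expect the real difficulty, and I would reduce it to Cauchy's rigidity theorem in the form extended by Alexandrov. Suppose two convex polyhedra $P,P'$ have isometric induced metrics. The subtlety absent from Cauchy's original theorem is that the isometry need not send vertices to vertices or faces to faces; so one first argues that an intrinsic isometry between convex polyhedral metrics must carry cone points to cone points (curvature being an isometric invariant), and then transports a geodesic triangulation, obtaining two combinatorially isomorphic convex surfaces with equal corresponding edge lengths and congruent corresponding faces. On these the classical scheme applies: label each edge by the sign of the difference of dihedral angles, use Cauchy's arm lemma to bound the number of sign changes around each vertex, and combine this with Euler's formula to force all the differences to vanish, whence congruence up to reflection. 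The genuinely hard points are this arm-lemma sign count and the verification that the a priori unstructured isometry is compatible with a shared triangulation (allowing flat dihedral angles along triangulation edges); the properness estimate in the existence half is the secondary technical hurdle.
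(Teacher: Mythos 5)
This lemma is not actually proved in the paper: it is quoted as a known result with a citation to Alexandrov's monograph \cite{code2} (p.~100), and what you outline is essentially Alexandrov's own proof from that source (rigidity first, then existence by the continuity / invariance-of-domain method). So the only question is whether your sketch would close up into a proof, and there it has one genuinely broken step: the properness--closedness claim. You assert that a nondegenerate convex polyhedron ``cannot collapse without its metric degenerating,'' and conclude that $\Phi(\mathcal{P}_n)$ is closed in $\mathcal{A}_n$. This is false. A sequence of nondegenerate polyhedra can flatten onto a doubly covered convex polygon (a tetrahedron collapsing onto a doubly covered square, say) while the induced metrics converge in $\mathcal{A}_n$ to a perfectly legitimate limit: the metric of the doubly covered polygon still has $n$ cone points, each of angle less than $2\pi$, bounded diameter, and no metric degeneration whatsoever. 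Worse, by the uniqueness assertion itself that limit metric is realized \emph{only} by the degenerate polyhedron, so it lies in $\mathcal{A}_n \setminus \Phi(\mathcal{P}_n)$ whenever $\mathcal{P}_n$ contains only nondegenerate polyhedra. Hence $\Phi(\mathcal{P}_n)$ is open but not closed, and the open-closed-connected argument fails; were it valid, it would ``prove'' the false statement that every metric in $\mathcal{A}_n$ has a nondegenerate realization. Your plan to handle doubly covered polygons afterwards by approximation cannot repair this, because the degenerate realizations are needed \emph{inside} the continuity argument: the correct fix (and Alexandrov's) is to include doubly covered polygons in $\mathcal{P}_n$ from the start, prove rigidity for them too, and control the structure of $\mathcal{P}_n$ along this degenerate stratum, where after quotienting by reflections one gets boundary-type behaviour.

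Beyond that, several asserted steps carry real content and cannot be waved through: the connectedness of $\mathcal{A}_n$ and its manifold structure of dimension $3n-6$ (edge lengths of a geodesic triangulation are neither globally defined nor injective coordinates) form a separate, substantial part of Alexandrov's argument; injectivity of $\Phi$ requires quotienting $\mathcal{P}_n$ by \emph{all} isometries of $\mathbb{R}^3$, reflections included, since rigidity holds only up to motion and reflection --- modulo rigid motions alone $\Phi$ is generically two-to-one; and reducing uniqueness to ``Cauchy plus the arm lemma'' understates the difficulty you yourself flag, since the intrinsic isometry need not match faces, so the sign count must be run on a common geodesic triangulation in which dihedral angles equal to $\pi$ occur, which is exactly where Cauchy's counting breaks down and why Alexandrov's uniqueness theorem is an extension of Cauchy's rather than a corollary. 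In short: right classical strategy, but the closedness step as written is wrong, and the deferred items are precisely where the length of Alexandrov's proof lives.
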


The \textit{double} $D$ of the planar convex body $K$ is obtained by gluing
two isome\-tric copies of $K$ along their boundary, by identifying the
naturally corresponding points. The two copies of $K$ are called the \textit{%
faces} of $D$. With some abuse of notation, we shall identify $K$ with one
face of $D$. $D$ can be seen as limit of convex surfaces in $\mathbb{R}^{3}$.

Opposite to Alexandrov's gluing theorem are the results on unfoldings. For
this subject, see the excellent book \cite{Demaine_O'Rourke} by E. Demaine
and J. O'Rourke.

\begin{definition}
Let $P$ be a convex polyhedron. The \emph{cut locus} $C(x)$ of the point $x$
on the polyhedral convex surface $P$ is defined as the set of endpoints,
different to $x$, of all nonextendable shortest paths (on the surface $P$)
starting at $x$.
\end{definition}

We postpone the presentation of the properties of cut loci to the next
subsection.

\begin{lemma}
\label{Source_unf} \rmcite{{ss-spps-86}} \rmb{{(Source
unfolding)}} Consider a polyhedral convex surface $P$ and a point $x$ in $P$%
. Cutting along $C(x)$ produces a surface $U$ with boundary which can be
unfolded in the plane without overlappings. Moreover, $U$ is star-shaped
with respect to $x$.
\end{lemma}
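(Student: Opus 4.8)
The plan is to develop the cut surface using geodesic polar coordinates centered at $x$, so that the unfolding appears as the region swept out by the family of segments issuing from $x$, bounded by the distance-to-cut-locus function over the space of initial directions. First I would recall the basic property of the cut locus that makes this work: every point $p\in P\setminus C(x)$ is joined to $x$ by a \emph{unique} segment, and the endpoint of a segment from $x$ lands on $C(x)$ precisely when that segment ceases to be shortest. Thus $P\setminus C(x)$ is swept out bijectively by the segments emanating from $x$, and cutting along $C(x)$ opens the surface exactly along the locus where this sweeping fails to be injective.

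Next I would set up the exponential map. The total angle $\theta_x$ at $x$ satisfies $\theta_x\le 2\pi$ by convexity of $P$, and a neighbourhood of $x$ is isometric to a neighbourhood of the apex of the Euclidean cone $\mathcal{C}_x$ of total angle $\theta_x$. Each segment from $x$ corresponds to a ray in $\mathcal{C}_x$, parametrized by its initial direction $\phi\in[0,\theta_x)$ and its arc length $r$; write $\exp_x(\phi,r)$ for its endpoint. Since a segment on a convex polyhedron never passes through a vertex except possibly at an endpoint, $\exp_x$ meets no vertex in the interior of such a ray, so away from the apex and the vertices $P$ is locally flat and $\exp_x$ is a \emph{local isometry} that unrolls each segment to a straight Euclidean segment.

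I would then consider the domain $\Omega=\{(\phi,r):0\le r\le r(\phi)\}$, where $r(\phi)=\rho(x,c(\phi))$ is the distance from $x$ to the first cut point $c(\phi)$ met in the direction $\phi$. Because every initial subsegment of a segment is again a segment, $\Omega$ is \emph{star-shaped} with respect to the origin: if $(\phi,r)\in\Omega$ then $(\phi,r')\in\Omega$ for all $0\le r'\le r$. Flattening $\mathcal{C}_x$ by cutting it along a single ray yields a planar wedge of angle $\theta_x\le 2\pi$, which embeds isometrically in the plane; placing $\Omega$ in this wedge realizes the planar development of $U$, and the star-shapedness of $\Omega$ is exactly the star-shapedness of $U$ with respect to the image of $x$.

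The main obstacle is to show that this development has \emph{no overlaps}, that is, that $\exp_x$ is injective on the interior of $\Omega$. Suppose $\exp_x(\phi,r)=\exp_x(\phi',r')=p$ with both parameters interior to $\Omega$. Then $p\in P\setminus C(x)$, so $p$ is joined to $x$ by a unique segment; hence the two rays coincide, forcing $\phi=\phi'$ and $r=r'=\rho(x,p)$. Thus $\exp_x$ is a bijection from $\mathrm{int}\,\Omega$ onto $P\setminus C(x)$, and being also a local isometry it is an isometric embedding, so the development lays $U$ out without overlapping. The remaining care is at the boundary $\partial\Omega$, where distinct parameters may represent the same cut point via its several segments from $x$; these become the distinct boundary arcs of the unfolded region, whose closure is the desired planar image of $U$.
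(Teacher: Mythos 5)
The paper offers no proof of this lemma at all: it is imported from Sharir and Schorr \cite{ss-spps-86} (see also \cite{Demaine_O'Rourke}), so there is no internal argument to compare yours against; the right benchmark is the proof in the cited literature, and what you wrote is essentially that standard proof. Its key ideas are correctly in place: a point lying strictly inside a segment issuing from $x$ cannot belong to $C(x)$, and points off $C(x)$ have a unique segment to $x$, so the polar parametrization $(\phi,r)\mapsto\exp_x(\phi,r)$ is a bijection from $\{(\phi,r):0\le r<r(\phi)\}$ onto $P\setminus C(x)$; since this parameter region sits inside the tangent cone at $x$ --- which is the Euclidean plane when $x$ is not a vertex --- injectivity of the development (non-overlap) comes for free rather than by a pairwise check; and star-shapedness is exactly the fact that initial subarcs of segments are segments. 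This is the same mechanism as in \cite{ss-spps-86}.

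Two loose ends remain. First, your argument implicitly requires the cut-distance function $\phi\mapsto r(\phi)$ to be continuous: without it, your ``interior of $\Omega$'' need not be open (so the local-isometry claim is not yet meaningful), $\Omega$ need not be closed, and --- most importantly --- the identification of the cut surface $U$ (by definition a completion of $P\setminus C(x)$ along the cut) with $\mathrm{cl}(\Omega)$, boundary arcs included, is precisely where the remaining content lies. Both halves are standard: upper semicontinuity of $r$ follows from convergence of segments (cf.\ Lemma \ref{LIDCv}), lower semicontinuity from the closedness of $C(x)$; this should be stated. Second, if $x$ is a vertex then $\theta_x<2\pi$ and $U$ has a genuine interior cone point at $x$, so it admits no isometric immersion into the plane whatsoever; your fix of cutting the cone along an extra ray proves a modified statement, with one more cut than the lemma allows. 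So the lemma, as stated and as used later in the paper (e.g.\ in the proof of Lemma \ref{F3}, where $x$ is a flat point), tacitly assumes $x$ is not a vertex; your write-up should make that hypothesis explicit instead of blurring the two cases under ``$\theta_x\le 2\pi$''.
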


\begin{lemma}
\label{Star_unf} \rmcite{{unfold1}} \rmb{{(Star unfolding)}}
Consider a polyhedral convex surface $P$ and a point $x$ in $P$ which is
joined to each vertex of $P$ by a unique segment. Cutting along the union of
those segments produces a surface $U$ with boundary which can be unfolded in
the plane without overlappings. Assume, with some abuse of notation, that $%
U\subset\mathbb{R}^{2}$; then the image of $C(x)$ in $U$ after unfolding is
precisely the restriction to $U$ of the Voronoi diagram of the images of $x$.
\end{lemma}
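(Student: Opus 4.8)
The plan is to cut $P$ open along the segments to its vertices, develop the resulting flat disk isometrically into the plane, and then read off both the non-overlapping and the Voronoi structure from the behaviour of segments issuing from $x$. Throughout I assume $x$ is not a vertex, so the total angle at $x$ is $2\pi$; write $v_1,\dots,v_k$ for the vertices and let $\sigma_i$ be the unique segment from $x$ to $v_i$. First I would settle the topology and flatness. Distinct segments issuing from $x$ can meet only at $x$: a transversal crossing at an interior point $q$ would make both initial arcs segments from $x$ to $q$ of equal length, and interchanging them would yield a segment from $x$ with a shortenable corner at $q$, a contradiction. Hence $\Sigma=\bigcup_i\sigma_i$ is a star with centre $x$ embedded in the sphere $P$, and cutting $P$ along $\Sigma$ produces a surface $U$ homeomorphic to a closed disk. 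Since every vertex of $P$ now lies on $\partial U$ and $x$ is regular, the interior of $U$ carries a flat metric, and a simply connected flat polyhedral surface develops isometrically into $\mathbb{R}^2$; fix one such development and identify $U$ with its image. Cutting opens $x$ into $k$ boundary points $x_1,\dots,x_k$, one per angular wedge between consecutive $\sigma_i$, and each $v_i$ into a single point, so $\partial U$ is a $2k$-gon whose corners alternate between the $x_i$ and the $v_i$, every edge being a developed copy of some $\sigma_i$.

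Next I would record how segments develop. For $p$ in the interior of $U$ and off the cut locus, its unique segment to $x$ meets no $\sigma_j$ (again, distinct segments from $x$ meet only at $x$), hence lies in $U$ and develops to a straight segment from $\tilde p$ to exactly one image $x_i$, so that $\rho(x,p)=\|\tilde p-x_i\|$; more generally each segment from $p$ to $x$ develops to a straight segment to some $x_j$, and one shows $\rho(x,p)=\min_j\|\tilde p-x_j\|$, the minimizing indices being exactly those realized by genuine segments. The \textbf{main obstacle} is proving that the development of the first step is injective, i.e.\ that $U$ unfolds to a \emph{simple}, non-self-overlapping polygon; this is the real content of the lemma and does not follow from flatness alone. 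Here I would follow the treatment in \cite{Demaine_O'Rourke}: a monodromy and continuity argument along the radial geodesics from $x$, using the positivity of the curvatures $\omega_{v_i}$ to control the turning of $\partial U$, shows that the source images $x_1,\dots,x_k$ are in convex position and that $p\mapsto\tilde p$ is one-to-one. Reproving this simplicity from scratch is where essentially all the difficulty lies.

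Granting injectivity, the Voronoi identification is then formal. A point $p$ lies on the cut locus $C(x)$ exactly when it has two distinct segments to $x$; by the previous step this happens precisely when $\tilde p$ is equidistant from two images $x_i,x_j$ that both realize $\min_j\|\tilde p-x_j\|$, i.e.\ when $\tilde p$ lies on a bisector separating its two nearest sites, so the image of $C(x)$ is contained in the Voronoi diagram of $\{x_1,\dots,x_k\}$. For the reverse inclusion, a point of $U$ whose image lies on such a bisector has two developed segments to $x$ of equal minimal length, each a genuine segment on $P$, hence belongs to $C(x)$. Intersecting with $U$ shows that the image of $C(x)$ is precisely the restriction to $U$ of the Voronoi diagram of the images of $x$, as claimed.
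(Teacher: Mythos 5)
The paper offers no proof of this lemma at all: it is quoted as a known theorem of Aronov and O'Rourke, and the only ``proof'' in the paper is the citation \cite{unfold1}. Read as an independent proof, your proposal has a genuine gap, and you in fact say so yourself: the non-overlap of the star unfolding is precisely the main theorem of \cite{unfold1}, and your text replaces it by an appeal to ``a monodromy and continuity argument'' from \cite{Demaine_O'Rourke}. Deferring the central claim to the literature is legitimate --- it is exactly what the paper does --- but then your argument does not constitute a proof; everything of substance is still being imported.

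Moreover, the part you claim is easy --- ``granting injectivity, the Voronoi identification is then formal'' --- is not correct as stated. Your derivation rests on the equality $\rho(x,p)=\min_j\|\tilde p-x_j\|$, which you introduce with ``one shows'' and never establish, and which does \emph{not} follow from injectivity of the development: the unfolded region $U$ is in general non-convex, so the straight segment from $\tilde p$ to an arbitrary image $x_j$ may leave $U$, in which case it corresponds to no path on $P$ of that length, and the inequality $\rho(x,p)\le\|\tilde p-x_j\|$ for non-realized indices $j$ is unjustified. Without it, a point of $C(x)$ equidistant from its two realized images could a priori lie strictly inside the Voronoi cell of some third, closer image, and the inclusion of the image of $C(x)$ into the Voronoi diagram would fail; the reverse inclusion suffers from the same problem. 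In Aronov--O'Rourke's work this distance property is proved together with non-overlap by one and the same structural analysis, so it cannot be peeled off as a formal afterthought. In short, your outline correctly locates the difficulty, but both halves of the statement are left resting on the very theorem being proved; the honest treatment is the paper's own, namely to cite \cite{unfold1}.
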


Roughly speaking, the following lemma shows that close unfoldings must fold
to close tetrahedra.

\begin{lemma}
\label{Close_unf} Let $\left\{ P^{n}\right\} _{n\in\mathbb{N}}=\left\{
a_{1}^{n}a_{2}^{n}\ldots a_{p}^{n}\right\} _{n\in\mathbb{N}}$ be a sequence
of $p$-gons in $\mathbb{R}^{2}$ converging to the $p$-gon $P=a_{1}\ldots
a_{p}$, such that $a_{j}^{n}\rightarrow a_{j}$ for $j=1,...,p$. Endow each $%
P_{n}$ and $P$ with compatible sets $R_{n}$ and $R$ of gluing rules, meaning
that $a_{i}^{n}a_{j}^{n}$ is glued (i.e., identified preserving the length)
to $a_{k}^{n}a_{l}^{n}$ according to $R_{n}$ if and only if $a_{i}a_{j}$ is
glued to $a_{k}a_{l}$ according to $R$. Assume that, for each $n$, the
gluing of $P_{n}$ according to $R_{n}$ yields a tetrahedron $T_{n}$. Then
the gluing of $P$ according to $R$ yields a tetrahedron $T$ (possibly
degenerated), and $T_{n}\rightarrow T$.
\end{lemma}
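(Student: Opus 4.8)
The plan is to verify that the limit gluing satisfies the hypotheses of Alexandrov's gluing theorem, to count the resulting cone points, and finally to establish Pompeiu--Hausdorff convergence through a compactness argument combined with the uniqueness clause of that theorem.

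First I would fix a common combinatorial model. Since $R_{n}$ and $R$ are compatible, all the gluings are carried out on one and the same topological sphere $\Sigma$, obtained by identifying the prescribed pairs of sides; only the metric varies. Because $a_{j}^{n}\to a_{j}$, corresponding glued sides have matching lengths in the limit (side lengths are continuous in the vertices), so $R$ is a consistent gluing rule for $P$. The total angle glued at any point of $\Sigma$ is either the polygon angle at an interior boundary point, a sum of two straight angles at a side-interior identification, or a sum of polygon angles over a group of identified polygon vertices; in every case it is a continuous function of the $a_{j}^{n}$ and does not exceed $2\pi$ for any $n$. Passing to the limit, the glued angle is $\le 2\pi$ everywhere, so by Lemma \ref{Alexandrov} the gluing of $P$ according to $R$ is isometric to a convex polyhedral surface $T$, possibly degenerate.

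Next I would argue that $T$ is a tetrahedron, possibly degenerated. The vertices (cone points) of a glued surface are precisely those groups of identified polygon vertices whose total glued angle is strictly less than $2\pi$. For each $n$ the surface $T_{n}$ is a genuine tetrahedron, hence has exactly four such groups. Writing $\theta_{i}^{n}$ for the total angle of the $i$-th group in $P_{n}$ and $\theta_{i}=\lim_{n}\theta_{i}^{n}$ for its value in $P$ (the limit exists by the continuity noted above), any group with $\theta_{i}<2\pi$ satisfies $\theta_{i}^{n}<2\pi$ for all large $n$; since at most four indices can have $\theta_{i}^{n}<2\pi$ for a given $n$, at most four groups have $\theta_{i}<2\pi$. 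Thus $T$ has at most four vertices, and being a convex polyhedral sphere it is a tetrahedron, degenerating at worst to a doubly covered convex quadrilateral or triangle.

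It remains to prove $T_{n}\to T$, which is the delicate point. I would first check that the intrinsic metrics converge: on the fixed model $\Sigma$ the distance between two points is the infimum of lengths of boundary-crossing broken paths, a quantity depending continuously and uniformly on the vertex positions, so the glued metrics $d_{n}$ converge uniformly to the glued metric $d$ of $T$. The tetrahedra $T_{n}$ have uniformly bounded diameters (their intrinsic, hence extrinsic, diameters are controlled by the converging perimeters), so after suitable rigid motions Blaschke's selection theorem extracts from any subsequence a Pompeiu--Hausdorff convergent sub-subsequence, with convex limit $T'$. By Lemma \ref{LIDCv} the intrinsic metric of $T'$ equals $\lim_{n}d_{n}=d$, so $T'$ is isometric to $T$; by the uniqueness clause of Lemma \ref{Alexandrov}, $T'$ coincides with $T$ up to a rigid motion. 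Since every subsequential limit equals $T$, the whole sequence converges, $T_{n}\to T$. The main obstacle is exactly this last step: one must make the uniform convergence $d_{n}\to d$ rigorous, including at a degenerate limit where geodesics may run along the doubling, and correctly combine the compactness of convex surfaces with Lemma \ref{LIDCv} and the uniqueness of Alexandrov's realization to upgrade subsequential convergence to convergence of the full sequence.
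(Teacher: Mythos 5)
Your argument is sound in outline but takes a genuinely different route from the paper's. The first half (common combinatorial model, angle bound $\leq 2\pi$ in the limit by continuity, and the cone-point count showing $T$ has at most four vertices) matches the paper's first half. For the convergence $T_{n}\rightarrow T$, however, the paper avoids compactness and rigidity altogether: it notes that the intrinsic distances $\rho^{T_n}\left( v_{i}^{n},v_{j}^{n}\right)$ between the cone points are continuous functions of $\left( a_{1}^{n},\ldots ,a_{p}^{n}\right)$ depending only on the gluing rules, that for a tetrahedron these six numbers are exactly the Euclidean edge lengths (every pair of vertices spans an edge, and a straight edge is a shortest path on the surface), and that the relative positions of a tetrahedron's vertices in $\mathbb{R}^{3}$ depend continuously on its six edge lengths; hence convergence of the unfoldings directly forces convergence of the tetrahedra. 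Your route --- uniform convergence of the glued metrics on $\Sigma$, Blaschke selection, Lemma \ref{LIDCv}, and the uniqueness clause of Lemma \ref{Alexandrov} --- buys generality (it would work verbatim for polyhedra with any number of vertices), at the price of invoking the deep uniqueness part of Alexandrov's theorem, and it needs two small repairs to be complete: first, a subsequential Pompeiu--Hausdorff limit $T^{\prime}$ could a priori collapse to a segment or a point, in which case Lemma \ref{LIDCv} does not apply; this is ruled out because the surface area of $T_{n}$ equals the area of $P_{n}$, which tends to the positive area of $P$. Second, the uniqueness clause of Lemma \ref{Alexandrov} compares convex \emph{polyhedral} surfaces, so before invoking it you must know that $T^{\prime}$ is polyhedral; the easiest fix is to pass to a further subsequence along which the four vertices of $T_{n}$ converge, so that $T^{\prime}$ is the boundary of the convex hull of the four limit points. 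Finally, note that both proofs rest on an analogous, not fully detailed, continuity assertion about glued intrinsic distances: the paper needs it only for the finitely many vertex pairs, whereas your uniform version for all pairs of points is a stronger statement that would require, for instance, a uniform bound on the number of boundary crossings of the relevant shortest paths.
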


\begin{proof}
By hypothesis, the gluing of $P_{n}$ according to $R_{n}$ yields a
polyhedron $T_{n}$ with only four vertices of positive curvature, say $%
v_{1}^n,v_{2}^n,v_{3}^n,v_{4}^n$, corresponding to the points $%
a_{i_{1}}^{n},a_{i_{2}}^{n},a_{i_{3}}^{n},a_{i_{4}}^{n}$ respectively.

By continuity, the limits of the zero curvature vertices of $T_{n}$ also
have zero curvature, whence $T$ has at most $4$ vertices, $v_i =\lim_n v_i^n$%
, $i=1,...,4$.

Notice now that the distance in $T_{n}$ between the vertices $v_{i}^{n}$, $%
v_{j}^{n}$, depends continuously on the position of the vertices $\left(
a_{1}^{n},\ldots,a_{p}^{n}\right)$. In other words, there exist continuous
maps $f_{ij}:\mathbb{R}^{2p}\rightarrow\mathbb{R}$, depending only of the
gluing rules, such that $\rho^T \left( v_{i},v_{j}\right)
=f_{ij}\left(a_{1},\ldots,a_{p}\right)$ and 
$\rho^{T_n} \left(v_{i}^{n},v_{j}^{n}\right)=f_{ij}\left( a_{1}^{n},\ldots,a_{p}^{n}\right)$, $%
n\in\mathbb{\mathbb{N}}$. Hence, the distance on $T_{n}$ between two
vertices converges to the distance on $T$ between the limit vertices. Since
the relative positions of a tetrahedron's vertices depend continuously on
its edge lengths, $T_{n}$ converges to $T$ in $\mathcal{T}$ .
\end{proof}


\subsection{Cut loci and local maxima}

\label{cl}

A very important tool for the study of distance functions on convex
polyhedra is the cut locus. See \cite{sa} for some of its applications in
Riemannian geometry.

We have already presented the definition of cut loci in the previous
subsection. Basic properties of cut loci are given in the following lemma.

\begin{lemma}
\label{basic} Let $x$ be a point on a polyhedral convex surface $P$.

(i) $C(x)$ is a tree whose leaves (endpoints) are vertices of $P$, and all
vertices of $P$, excepting $x$ (if the case), are included in $C(x)$.

(ii) The junction points in $C(x)$ are joined to $x$ by as many segments as
their degree in the tree, each leaf of $C(x)$ is joined to $x$ by one
segment, and the other points in $C(x)$ are joined to $x$ by precisely two
segments.

(iii) The edges of $C(x)$ are shortest paths on $P$.

(iv) Assume the shortest paths $\Gamma$ and $\Gamma^{\prime}$ from $x$ to $%
y\in C(x)$ are bounding a domain $D$ of $P$, which intersects no other
shortest path from $x$ to $y$; the case $\Gamma= \Gamma^{\prime}$ is also
possible. Then the arc of $C(x)$ at $y$ towards $D$ bisects the angle of $D$
at $y$.
\end{lemma}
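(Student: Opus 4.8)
The plan is to reduce everything to the local structure of $C(x)$ near one of its points, obtained by developing the surface along segments, and then to read the global structure off the source and star unfoldings of Lemmas \ref{Source_unf} and \ref{Star_unf}. Throughout I use the characterization built into the definition: a point $y\neq x$ lies in $C(x)$ precisely when the segment(s) from $x$ to $y$ cannot be prolonged as shortest paths beyond $y$. On a polyhedral surface this happens in exactly two ways, which I would isolate at the outset: either $y$ carries at least two distinct segments from $x$, or $y$ is a vertex of positive curvature, i.e. a cone point of total angle $<2\pi$ through which no geodesic extends minimally.

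First I would prove the bisection property (iv), the main local building block. Given $y\in C(x)$ and two segments $\Gamma,\Gamma'$ bounding a region $D$ meeting no other segment from $x$ to $y$, I develop $D$ isometrically into $\mathbb{R}^2$: the two copies $x_1,x_2$ of $x$ then lie at the common distance $\rho(x,y)$ from the image of $y$, so the triangle $x_1yx_2$ is isosceles. A point of $D$ near $y$ lies on $C(x)$ exactly when it is equidistant, through the two developments, from $x$; hence the cut arc at $y$ develops onto the perpendicular bisector of $x_1x_2$, which passes through $y$ and bisects the angle $\angle x_1yx_2$, that is, the angle of $D$ at $y$. The degenerate case $\Gamma=\Gamma'$ (a leaf) is the same argument applied to the full angle $\theta_y$ at $y$. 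This simultaneously shows each arc of $C(x)$ develops to a straight segment, so the arcs are geodesics, which is part of (iii). Counting now gives (ii): ordering the $k$ segments arriving at $y$ by their angles and applying the bisection argument to each of the $k$ sectors between cyclically consecutive segments produces exactly one cut arc per sector, so the degree of $y$ in $C(x)$ equals $k$. In particular, points with two segments are ordinary points (degree $2$) and points with a single segment are leaves.

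For (i) the tree property follows globally. By Lemma \ref{Source_unf}, cutting along $C(x)$ yields a region $U$ that is star-shaped with respect to $x$, hence radially contractible; its interior is the image of the set of points joined to $x$ by a unique minimally prolongable segment, namely $P\setminus C(x)$. Pushing each such point outward along its segment to its endpoint in $C(x)$ exhibits $C(x)$ as a deformation retract of the disk $P\setminus\{x\}$, so $C(x)$ is a contractible one-dimensional complex, i.e. a tree. That every vertex $v\neq x$ lies in $C(x)$ is immediate from non-extendability: a minimal geodesic cannot pass through a cone point of angle $<2\pi$, so the segment from $x$ to $v$ is nonextendable and $v\in C(x)$. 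Conversely a leaf carries a single segment by (ii), and if such an endpoint were a flat point the segment could be prolonged minimally past it, contradicting that it is a nonextendable endpoint; hence every leaf is a vertex.

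It remains to upgrade the arcs from geodesics to genuine shortest paths, which I expect to be the main obstacle. Here I would invoke the star unfolding of Lemma \ref{Star_unf}: the image of $C(x)$ is the restriction to the planar region $U$ of the Voronoi diagram of the images of $x$, whose edges are straight segments lying inside $U$. For two points $y,z$ on a common edge, the straight Voronoi segment joining their images has length $\rho(y,z)$ and, being straight in the non-overlapping planar image, cannot be shortened by any competing path on $P$: such a path develops into $U$ (or a further unfolding) as a planar path no shorter than the straight segment. The delicate point is precisely to guarantee that the competing path develops without overlap, so that the planar comparison is legitimate; this is exactly what the no-overlap conclusions of Lemmas \ref{Source_unf} and \ref{Star_unf} provide, and making this comparison rigorous is where the real work of (iii) lies.
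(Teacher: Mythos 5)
You are supplying actual arguments where the paper supplies none: the paper's entire ``proof'' of Lemma \ref{basic} is the remark that (i), (ii), (iv) are well known in a more general framework and that (iii) is Lemma 2.4 of \cite{code1}. Your treatment of (i), (ii) and (iv) follows the standard route the paper alludes to, but (iv) as written contains a misstep. You cannot ``develop $D$ isometrically into $\mathbb{R}^{2}$'': when $\Gamma\neq\Gamma'$, Gauss--Bonnet forces the region bounded by the two segments to contain interior curvature (its total curvature equals the sum of the positive angles of $D$ at $x$ and at $y$), so $D$ always contains cone points and admits no planar development; indeed part of the tree $C(x)$, with its vertex leaves, runs inside $D$. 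The correct version of your argument develops only a thin collar of $\Gamma\cup\{y\}\cup\Gamma'$, which is flat because segments pass through no cone point; in that collar one gets the two images $x_{1},x_{2}$ of $x$ and the isosceles triangle, and the hypothesis that $D$ meets no third segment from $x$ to $y$ is what guarantees (via a limit argument for segments to cut points near $y$) that those segments develop into the collar, legitimizing the perpendicular-bisector identification. This is repairable, and with it your derivation of (ii) and of the leaf statements in (i) is the classical one, though the claim that a flat leaf's unique segment ``could be prolonged minimally past it'' itself deserves a sentence of justification.

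The genuine gap is (iii), which is precisely the part the paper refuses to prove and outsources to \cite{code1}. Your bisector picture does show each edge of $C(x)$ is a geodesic, but minimality does not follow from the non-overlap statements. Non-overlap of the source or star unfolding asserts only that the unfolding map is injective on $U$; it gives no control over a competing path $\sigma$ on $P$ between two points $y,z$ of an edge. Such a $\sigma$ may cross the cut set (the segments from $x$ to the vertices for the star unfolding, or $C(x)$ itself for the source unfolding), and then its image in $U$ is a disjoint union of arcs whose endpoints are pairwise identified along $\mathrm{bd}(U)$; no comparison between the length of $\sigma$ and the straight Voronoi edge follows from injectivity of the unfolding. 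Your fallback, developing $\sigma$ ``into a further unfolding,'' also fails: unfolding along $\sigma$ produces images of $y$ and $z$ whose planar positions have no relation to their positions in the star unfolding, so the straight-segment comparison is vacuous. Since you yourself flag this step as ``where the real work of (iii) lies'' without carrying it out, part (iii) remains unproved in your proposal; closing it requires the actual argument of Lemma 2.4 in \cite{code1} (a genuine analysis of the Voronoi structure of the star unfolding), which is exactly why the authors cite it rather than reprove it.
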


The properties (i)-(ii) and (iv) are well known in a more general framework,
while (iii) is Lemma 2.4 in \cite{code1}.

\bigskip

Lemma \ref{basic} yields, for the particular case of polyhedra with $4$
vertices, the following lemma.

\begin{lemma}
\label{L2} The cut locus of a point $x$ in a (possibly degenerated)
tetrahedron $T$ is homeomorphic to one of the letters $H$, $X$, $Y$, or $I$.
\end{lemma}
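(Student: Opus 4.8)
The plan is to treat the statement as a purely combinatorial consequence of Lemma \ref{basic}, reducing the geometry to the enumeration of finite trees with a bounded number of leaves. First I would record that, by part (i) of Lemma \ref{basic}, the cut locus $C(x)$ is a tree; it is moreover \emph{finite}, being a union of finitely many shortest-path arcs (its edges), and its leaves are all vertices of $T$. Since a (possibly degenerate) tetrahedron has at most four vertices, and at most three of them are distinct from $x$ when $x$ is itself a vertex, the number $L$ of leaves of $C(x)$ satisfies $L \leq 4$. On the other hand $C(x)$ contains every vertex of $T$ other than $x$, hence it has at least one edge and therefore $L \geq 2$.

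The core of the argument is the elementary identity, valid for any finite tree, relating leaves to the higher-degree (junction) vertices. Summing $\deg v - 2$ over all vertices $v$ gives $2E - 2V = -2$ (as $E = V-1$); since each leaf contributes $-1$, each degree-$2$ point contributes $0$, and each junction of degree $d \geq 3$ contributes $d - 2 \geq 1$, one obtains $\sum_{\mathrm{junctions}} (\deg v - 2) = L - 2$. Combined with $2 \leq L \leq 4$, this forces the total excess $\sum (\deg v - 2) \in \{0,1,2\}$ and pins down the homeomorphism type in each case: $L = 2$ yields no junction, so $C(x)$ is an arc, homeomorphic to $I$; $L = 3$ yields a single junction of degree $3$, giving $Y$; and $L = 4$ yields either a single junction of degree $4$ (type $X$) or exactly two junctions of degree $3$ (type $H$), according to the two partitions $2 = 2$ and $2 = 1+1$. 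Degree-$2$ points, including any vertices of $T$ that happen not to be leaves, do not affect the homeomorphism type and may be suppressed. This exhausts the possibilities and identifies the four letters.

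The genuine content is entirely in Lemma \ref{basic}, so the expected obstacle is not the combinatorics but the bookkeeping of the degenerate and special configurations: I would check separately that a vertex of $T$ may occur as a junction or as an interior (degree-$2$) point of $C(x)$ without enlarging $L$, that the doubly covered triangle (with only three curvature-carrying vertices) stays within the bound $L \leq 3$, and that the case where $x$ is a vertex merely lowers $L$. In each of these situations the inequality $2 \leq L \leq 4$ persists, so the enumeration above applies verbatim and no further case produces a fifth topological type.
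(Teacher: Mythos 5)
Your proposal is correct and takes essentially the same route as the paper: the paper's entire proof is the remark that Lemma \ref{basic} yields the result for polyhedra with four vertices, and your argument simply makes explicit the tree enumeration (leaves bounded by the number of vertices, plus the leaf/junction count $\sum_{\mathrm{junctions}}(\deg v-2)=L-2$) that the paper leaves implicit. The degenerate cases you flag (vertices of $T$ appearing as degree-two or junction points, doubly covered triangles, $x$ a vertex) are exactly the right ones to check, and none of them escapes the bound $2\leq L\leq 4$.
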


The case of a Jordan arc cut locus is very particular, as stated by the next
known result (see \cite{INV} for a proof).

\begin{lemma}
\label{deg} If the cut locus $C(x)$ of a point $x$ in a convex (possibly
degenerated) polyhedron $P$ is an arc then $P$ is a doubly covered polygon.
\end{lemma}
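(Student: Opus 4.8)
The plan is to read off the metric of $P$ from the source unfolding at $x$ and to show that this unfolding is a polygon symmetric across a line, so that $P$ is recovered by folding it in two; that is, $P$ is a double.

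First I would fix the combinatorics. Since $C(x)$ is an arc (the letter $I$ of Lemma~\ref{L2}), Lemma~\ref{basic}(i)--(ii) gives two endpoints $A,B$, which are vertices of $P$ reached by a single segment, whereas every interior point of the arc is reached by exactly two segments; the remaining vertices $v_1,\dots,v_k$ of $P$ lie in the relative interior of the arc. Cutting $P$ along $C(x)$ and developing the resulting disk in the plane (Lemma~\ref{Source_unf}) yields a simple, star-shaped region $U$ with centre $O$, the image of $x$; its boundary is the doubled arc, a closed curve formed by two banks $c_1,c_2$ joining the images of $A$ and $B$. Because each edge of $C(x)$ is a geodesic of $P$ (Lemma~\ref{basic}(iii)) with a flat one-sided neighbourhood, it develops to a straight segment, so $c_1$ and $c_2$ are polygonal and $U$ is a polygon; as the two banks carry the same arc length, corresponding edges of $c_1$ and $c_2$ are congruent.

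The heart of the argument is the angle analysis through Lemma~\ref{basic}(iv). At an interior non-vertex point the bisection property makes each bank locally straight, while at $v_j$ it splits the cone angle $2\pi-\omega_{v_j}$ into two equal interior angles $\pi-\omega_{v_j}/2<\pi$ on the two banks; applying (iv) at the leaves (with $\Gamma=\Gamma'$) shows that the radial segments $OA$ and $OB$ bisect the corner angles of $U$ at $A$ and $B$. Since $U$ lies on opposite sides of $c_1$ and $c_2$, these convex corners make $c_1$ turn one way and $c_2$ the other by equal amounts $\omega_{v_j}/2$; together with the equal edge lengths and the matching initial directions coming from the bisection at $A$, this identifies $c_2$ with the mirror image $\sigma(c_1)$, where $\sigma$ is the reflection in the line $OA$. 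As $c_2$ must also terminate at $B$, the point $B$ is fixed by $\sigma$, forcing $O,A,B$ to be collinear; hence $U$ is symmetric across the line $\ell=OA=OB$, and by star-shapedness $\ell\cap U$ is the single chord $[A,B]$ through $O$.

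Finally, the gluing map $c_1(s)\leftrightarrow c_2(s)$ is precisely $\sigma$ restricted to $\partial U$, so $\sigma$ descends to an isometric involution of $P$ whose fixed set is the simple closed curve $C(x)\cup\overline{[A,B]}$ (the latter arc passing through $x$). Writing $U=U^+\cup_\ell U^-$, gluing the banks makes $U^+$ and $U^-$ coincide along their whole boundary, so $P$ is the metric double of the half-polygon $U^+$; its corner angles $\pi-\omega_{v_j}/2$, $\pi-\omega_A/2$, $\pi-\omega_B/2$ are all less than $\pi$, so $U^+$ is convex and $P$ is a doubly covered convex polygon (the degenerate reconstruction being legitimate by Lemma~\ref{Alexandrov}).

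I expect the decisive difficulty to be the middle step: showing that the two banks are genuine mirror images, and not merely isometric polygonal arcs sharing their endpoints and side lengths. This is exactly where property (iv) is indispensable, both to equalise the corner angles on the two banks and to pin the reflection axis through $A$ via the leaf bisection; the resulting collinearity of $O,A,B$ is the subtle output. I would also treat separately the case where $x$ is itself a vertex---then $O$ lies on $\partial U$ and $U$ gains a corner there---and use $\sum_v\omega_v=4\pi$ to check that interior vertices must actually be present.
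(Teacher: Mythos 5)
First, a point of order: the paper does not prove this lemma at all --- it is quoted as a known result, with the proof deferred to \cite{INV} --- so your attempt can only be measured on its own merits, not against an internal argument. On those merits, your proof of the main case (where $x$ is not a vertex of $P$) is essentially correct, and it is the natural argument: the source unfolding $U$ (Lemma \ref{Source_unf}) has boundary equal to the doubled arc; Lemma \ref{basic}(iv) makes each bank straight at flat points of $C(x)$, gives both banks the equal angle $\theta_{v_j}/2=\pi-\omega_{v_j}/2$ at each interior vertex, and makes $OA$ bisect the corner at the leaf $A$; equal edge lengths and equal-but-opposite turning then yield $c_2=\sigma(c_1)$ for the reflection $\sigma$ in the line $OA$, so $\sigma(B)=B$ puts $B$ on that line; and since the regluing of $U$ into $P$ is exactly $p\mapsto\sigma(p)$ on $\partial U$, the surface $P$ is the double of $U^{+}$, all of whose interior angles are less than $\pi$, hence a doubly covered convex polygon. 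One step is glossed over: to get $\partial U\cap\ell=\{A,B\}$, so that $c_1$ really lies in one closed half-plane, bare star-shapedness is not quite enough; you need that no radial segment of $U$ meets $\partial U$ before its endpoint --- equivalently, that a segment from $x$ cannot pass through the vertex $A$ or $B$. This is standard for source unfoldings and easy to insert.

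The genuine gap is the case where $x$ is itself a vertex of $P$, which the statement (and its application in the paper, e.g.\ in the proof of Lemma \ref{dim_F4}) does not exclude, and your parenthetical plan for that case is wrong. If $x$ is a vertex, $O$ does not move to $\partial U$: cutting along $C(x)$ alone leaves a cone point of curvature $\omega_x>0$ in the \emph{interior} of $U$, so $U$ admits no planar development at all, and Lemma \ref{Source_unf} --- the scaffolding of your entire construction --- is simply unavailable there. The repair is not hard, but it must be carried out: cut along $C(x)\cup\gamma$, where $\gamma$ is the unique segment from $x$ to the leaf $A$; this union is again an arc (from $x$ through $A$ to $B$), so cutting along it yields a flat disk that develops onto a polygon with one corner of angle $\theta_x$ at $O$, two corners of equal angle $\theta_A/2$ at the two copies of $A$ (leaf bisection applied to $\gamma$), and the same bank data as before. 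The identical mirror induction, started from the bisector of the angle at $O$, shows this polygon is symmetric with $B$ on the axis, and folding exhibits $P$ as the double of a polygon whose exterior angles are positive and sum to $\frac{1}{2}\sum_{v}\omega_{v}=2\pi$, i.e.\ of a convex polygon. Until this case is written out, your proof covers only non-vertex $x$.
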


We will give in the following some basic properties of local maxima.

\begin{lemma}
\label{loc-max} Let $P$ be a polyhedral convex surface and $x, y$ points in $%
P$.

(i) $M_{x} \subset C \left( x\right) $.

(ii) $y\in M_{x}$ if and only if the angle at $y$ between any two
consecutive segments from $y$ to $x$ is less than $\pi$. Consequently, $%
M_{x} $ contains only strict local maxima.

(iii) Each point in $M_{x}$ is either a leaf or a junction point in $C\left(
x\right) $ (a vertex of degree two in $C\left( x\right)$ is also considered
as a junction point).

(iv) A leave $y$ of $C \left( x\right) $ belongs to $M_{x}$ if and only if $%
\omega_{y} > \pi$.
\end{lemma}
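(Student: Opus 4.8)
The plan is to prove each of the four assertions about $M_x$ by relating local maxima of $\rho_x$ to the structure of the cut locus $C(x)$, using the fact (to be invoked) that $\rho_x$ is smooth away from $C(x)$ and that its behavior on $C(x)$ is governed by the segments joining $x$ to the points of $C(x)$. The central principle throughout is the \emph{first variation formula}: if $y$ is joined to $x$ by segments making angles $\alpha_1,\dots,\alpha_k$ between consecutive ones around $y$, then moving $y$ in a direction $u$ changes $\rho_x$ at first order like $-\cos\beta$, where $\beta$ is the angle between $u$ and the nearest segment direction; the directional derivative of $\rho_x$ in direction $u$ equals $-\max_i \cos(\angle(u,\gamma_i))$, the maximum taken over the incoming segment directions $\gamma_i$.

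For (i), I would argue that if $y\notin C(x)$ then $y$ is joined to $x$ by a unique segment, along which $\rho_x$ strictly increases; hence moving away from $x$ along the prolongation of that segment increases $\rho_x$, so $y$ cannot be a local maximum. Thus $M_x\subset C(x)$. For (ii), the point $y$ is a local maximum if and only if every direction $u$ at $y$ is a non-increasing direction for $\rho_x$, i.e.\ $\max_i\cos(\angle(u,\gamma_i))\ge 0$ for all $u$; by elementary planar geometry (developing the segments into the tangent cone at $y$) this holds for all $u$ precisely when the segment directions $\gamma_1,\dots,\gamma_k$ are not contained in any open half-plane, which is equivalent to every angle between two consecutive segments being strictly less than $\pi$. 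Strictness of the inequality gives strictness of the local maximum, so $M_x$ contains only strict local maxima.

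For (iii), I would combine (ii) with Lemma~\ref{basic}(ii): a point of $C(x)$ that is neither a leaf nor a junction point is joined to $x$ by exactly two segments, and these two segments bound two consecutive angles one of which is at least $\pi$ (indeed the cut locus arc bisects one side, forcing a flat configuration), so by (ii) such a point is not a local maximum; hence every point of $M_x$ is a leaf or junction point. Here a vertex of degree two in $C(x)$ must be counted as a junction point, since at such a point the two cut-locus arcs meet at an angle and the configuration of segments can again satisfy the condition in (ii). For (iv), a leaf $y$ of $C(x)$ is, by Lemma~\ref{basic}(i), a vertex of $P$, and by Lemma~\ref{basic}(ii) it is joined to $x$ by a single segment; developing the surface around $y$, the total angle is $\theta_y=2\pi-\omega_y$, and the two ``sides'' of the single incoming segment subtend an angle $\theta_y$. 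The condition of (ii) — that the angle between consecutive segments (here the one segment with itself, going around $y$) be less than $\pi$ on each side — translates into $\theta_y<\pi$ on the far side, i.e.\ into $2\pi-\omega_y<\pi$, which is exactly $\omega_y>\pi$.

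I expect the main obstacle to be the careful bookkeeping in (iii) and (iv) of how many segments reach $y$ and what the consecutive angles are after developing, since this is where the interplay between the combinatorial tree structure of $C(x)$ (Lemma~\ref{basic}) and the metric first-variation condition of (ii) must be made precise; in particular the degenerate degree-two case and the single-segment leaf case require developing the surface into the plane and reading off the angle condition correctly, which is the delicate point rather than any long computation.
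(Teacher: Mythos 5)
Your overall plan---develop the segments from $y$ to $x$ around $y$ and read off an angle criterion, then combine with Lemma \ref{basic} to obtain (iii) and (iv)---is the same as the paper's (its entire proof of (ii) is ``consider a local unfolding of $P$ along the segments from $x$ to $y$''). However, your proof of (ii), on which everything else rests, has a genuine gap: the claimed biconditional ``$y$ is a local maximum if and only if every direction $u$ at $y$ is non-increasing, i.e.\ $\max_i\cos(\angle(u,\gamma_i))\ge 0$ for all $u$'' is false. Non-positive first variation in every direction is necessary but not sufficient for a local maximum, and the failure occurs precisely in the boundary case where some consecutive angle equals $\pi$: there the bisecting direction $u$ has derivative $-\cos(\pi/2)=0$, so your criterion is met, yet $y$ is \emph{not} a local maximum, because after unfolding the sector of angle $\pi$ the distance to the developed image of $x$ grows like $\sqrt{\rho_x(y)^2+t^2}>\rho_x(y)$ along the bisector---a second-order increase invisible to first variation. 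The same slip appears in your equivalence ``not contained in any open half-plane $\Leftrightarrow$ all consecutive angles $<\pi$'': lying in no \emph{open} half-plane corresponds to all gaps $\le\pi$, and you would need \emph{closed} half-planes to encode the strict inequality. As a result your chain of equivalences assigns contradictory verdicts to the configuration with a gap equal to $\pi$, and neither direction of (ii) is actually established in that case.

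This is not a pedantic corner case: it is exactly the case the paper uses. In Lemma \ref{Lm3}, isosceles tetrahedra have total angle exactly $\pi$ at every vertex, and the argument there needs that a leaf of $C(x)$ with $\theta_v=\pi$ (equivalently $\omega_v=\pi$, not $>\pi$) is \emph{not} a local maximum; the strict inequality $\omega_y>\pi$ in (iv), and the strictness assertion in (ii), depend on it in the same way. The repair is to argue as the paper does, with the local unfolding rather than first variation: for $z$ near $y$ one has $\rho_x(z)=\min_i\Vert \bar{x}_i-z\Vert$, where the $\bar{x}_i$ are the images of $x$ obtained by developing into the plane the sectors between consecutive segment directions; the law of cosines then shows that in a sector of angle $\alpha<\pi$ every nearby $z\neq y$ satisfies $\rho_x(z)<\rho_x(y)$ (giving a strict local maximum), while in a sector of angle $\alpha\ge\pi$ the points on the bisector satisfy $\rho_x(z)\ge\sqrt{\rho_x(y)^2+\Vert z-y\Vert^2}>\rho_x(y)$. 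With (ii) corrected in this way, your deductions of (i), (iii) and (iv) go through essentially as you wrote them.
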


\begin{proof}
The inclusion $M_{x} \subset C \left( x\right) $ is clear from the
definitions.

The second statement is straightforward, if one considers a local unfolding
of $P$ along the segments from $x$ to $y$.

From (ii) in Lemma \ref{basic} and (ii) we obtain (iii) and (iv).
\end{proof}


\section{The inclusions $\mathcal{M}_{k} \supset\mathcal{M}_{k+1}$}

\label{Sect_lmax}

In this section we treat the first chain of inclusions in our Theorem,
concerning the sets $\mathcal{M}_{k}$. To each inclusion one or several
lemmas are devoted.

\begin{lemma}
\label{LMbB} Let $X$ be a compact topological space, and $f:X\rightarrow 
\mathbb{R}$ a continuous function. Let $U$ be an open set in $X$, and $x\in
X $ a point such that $f\left( x\right) >\max_{y\in\bd (U)}f\left( y\right) $%
. Then $f$ admits a local maximum in $U$.
\end{lemma}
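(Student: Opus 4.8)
The plan is to reduce the statement to the extreme value theorem applied on the compact set $\cl(U)$, and then to use the strict inequality in the hypothesis to force the resulting maximizer into the open part $U$ rather than onto $\bd(U)$. Since $U$ is open, $\cl(U)$ is a closed subset of the compact space $X$, hence compact; as $f$ is continuous, it attains a maximum on $\cl(U)$ at some point $p$. I would then exploit the decomposition $\cl(U)=U\cup\bd(U)$ (disjoint, because for open $U$ one has $\intr(U)=U$ and $\bd(U)=\cl(U)\setminus U$), which splits the location of $p$ into exactly two cases; the heart of the argument is to exclude $p\in\bd(U)$.

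To carry this out, I would first note that the hypothesis is only usable when $x\in U$: with merely $x\in X$ the statement fails, as one sees by taking $X=[0,1]$, $f(t)=t$, and $U=(0,\tfrac12)$, where $f(1)>\max_{\bd(U)}f=\tfrac12$ yet $f$ has no local maximum in $U$. Assuming then $x\in U\subset\cl(U)$, the choice of $p$ gives $f(p)=\max_{\cl(U)}f\geq f(x)$, and combining this with the hypothesis yields
\[
f(p)\geq f(x)>\max_{y\in\bd(U)}f(y).
\]
Hence $f(p)>f(y)$ for every $y\in\bd(U)$, so $p\notin\bd(U)$; since $p\in\cl(U)=U\cup\bd(U)$, this forces $p\in U$.

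It then remains to upgrade $p$ from a maximizer over $\cl(U)$ to a genuine local maximum of $f$ on $X$. Here $U$ itself plays the role of the required open neighbourhood of $p$: for every $y\in U\subset\cl(U)$ we have $f(y)\leq f(p)$ by the defining property of $p$, so $p$ is a local maximum of $f$ lying in $U$, which is the conclusion.

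I do not anticipate a serious obstacle, as this is fundamentally an elementary compactness argument; the only point demanding care is the separation of the two possible locations of $p$, and this is precisely what the strict inequality is designed to provide. The degenerate case $\bd(U)=\varnothing$ (when $U$ is clopen) requires no special treatment, since then $\cl(U)=U$ and the maximizer automatically lies in $U$.
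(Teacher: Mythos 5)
Your proof is correct and takes essentially the same route as the paper's: pick a global maximizer of $f$ on the compact set $\cl(U)$, use the strict inequality to rule it out of $\bd(U)$, and conclude that it is a local maximum since $U$ is open. Your remark that the statement implicitly requires $x\in U$ (equivalently $x\in\cl(U)$) is also well taken --- the paper's own proof tacitly assumes this when it infers $z\notin\bd(U)$ from $f(z)<f(x)$, and in every application of the lemma the distinguished point does lie in $U$.
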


\begin{proof}
Let $z$ be a global maximum of $f$ on $\cl(U)$. Clearly, $z\notin\bd(U)$ for 
$f\left( z\right) <f\left( x\right) $, hence it is a global maximum for the
restriction of $f$ to $U$ and, consequently, a local maximum for $f$.
\end{proof}

\begin{lemma}
\label{perturb_max} Let $S_n$ a sequence of convex surfaces, converging to $%
S $ with respect to the Pompeiu-Hausdorff metric. Let $x_{n},y_{n}$ be two
points of $S_n$ converging to $x\in S$ and $y\in S$ respectively. Let $y$ be
a strict local maximum of $\rho^S_x$. Then for each $r>0$, there exists an
index $N$ such that for all $n>N$, the function $\rho^{S_n}_{x_n}$ admits a
local maximum in $B\left( y_n,r\right) $.
\end{lemma}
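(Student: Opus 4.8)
The plan is to localise near $y$ and invoke Lemma~\ref{LMbB}. First I would observe that it suffices to produce a local maximum inside some smaller ball $B(y_n,r')$ with $0<r'\le r$, since $B(y_n,r')\subset B(y_n,r)$. Because $y$ is a \emph{strict} local maximum of $\rho^S_x$, there is $\eta>0$ such that $\rho^S_x(z)<\rho^S_x(y)$ whenever $0<\rho^S(y,z)\le\eta$; I would fix $r'\le\min(r,\eta)$ small enough that the metric sphere $S(y,r')$ on $S$ is nonempty and compact. Then $m:=\max_{z\in S(y,r')}\rho^S_x(z)$ satisfies $m<\rho^S_x(y)$, so $y$ is separated from $S(y,r')$ by a positive ``moat'' of height $\rho^S_x(y)-m>0$. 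The goal is to show that this moat persists on $S_n$ around $y_n$ for all large $n$.

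The core step is to transfer the inequality $\rho^{S_n}_{x_n}(y_n)>\max_{z\in S(y_n,r')}\rho^{S_n}_{x_n}(z)$ to all large $n$. I would argue by contradiction: if it failed along a subsequence, let $z_n\in S(y_n,r')$ be a maximiser of $\rho^{S_n}_{x_n}$ over the compact metric sphere, so that $\rho^{S_n}(y_n,z_n)=r'$ and $\rho^{S_n}_{x_n}(z_n)\ge\rho^{S_n}_{x_n}(y_n)$. As the $S_n$ lie in a fixed bounded region of $\mathbb{R}^3$, I would extract $z_n\to z\in S$. Using the convergence of segments and distances furnished by Lemma~\ref{LIDCv} (take segments from $x_n$, resp.\ $y_n$, to $z_n$ and pass to convergent subsequences), I get $\rho^S(y,z)=\lim_n\rho^{S_n}(y_n,z_n)=r'$, so $z\in S(y,r')$ and in particular $z\neq y$, and likewise $\rho^S_x(z)=\lim_n\rho^{S_n}_{x_n}(z_n)\ge\lim_n\rho^{S_n}_{x_n}(y_n)=\rho^S_x(y)$. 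This contradicts $\rho^S_x(z)\le m<\rho^S_x(y)$. Hence the moat survives: for all large $n$, $\rho^{S_n}_{x_n}(y_n)>\max_{z\in S(y_n,r')}\rho^{S_n}_{x_n}(z)$.

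Finally I would apply Lemma~\ref{LMbB} on the compact space $X=S_n$ with $f=\rho^{S_n}_{x_n}$, the open set $U=\{z\in S_n:\rho^{S_n}(y_n,z)<r'\}$, and the point $y_n\in U$. Since $\bd(U)\subset S(y_n,r')$, the moat inequality gives $f(y_n)>\max_{z\in\bd(U)}f(z)$, so $f$ admits a local maximum in $U\subset B(y_n,r')\subset B(y_n,r)$, as required. The main obstacle is precisely the transfer step in the second paragraph: $y_n$ is only assumed to converge to $y$ and need not itself be a local maximum on $S_n$, and the intrinsic metric spheres $S(y_n,r')$ live on different surfaces, so the comparison cannot be carried out pointwise. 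The subsequence extraction combined with the convergence of segments and distances in Lemma~\ref{LIDCv} is exactly what bridges this gap; along the way one should check the harmless technical points that $S(y_n,r')$ is nonempty and compact for large $n$ and small $r'$, and that $\bd(U)$ indeed sits inside the metric sphere.
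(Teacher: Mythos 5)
Your proposal is correct and follows essentially the same route as the paper's own proof: shrink the radius so the strict maximum gives a positive ``moat'' on $S$, take maximizers of $\rho^{S_n}_{x_n}$ on the intrinsic spheres $S(y_n,r')$, pass to a limit point via Lemma~\ref{LIDCv} to transfer the moat inequality to $S_n$ for large $n$, and conclude with Lemma~\ref{LMbB}. The only difference is presentational: the paper carries out the transfer by direct $\varepsilon/2$ estimates on a limit point of the maximizers, whereas you phrase it as a contradiction along a subsequence, which amounts to the same argument.
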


\begin{proof}
Since $y$ is a local maximum for $\rho^S_{x}$, we can assume (by making $r$
smaller) that 
\begin{equation*}
\varepsilon\overset{def}{=}\rho^S_{x}\left( y\right) -\max_{z\in S\left(
y,r\right) }\rho^S_{x}\left( z\right) >0\text{.}
\end{equation*}
Let $z_{n}$ be a global maximum for $\rho^{S_n}_{x_{n}}$ restricted to $%
S\left( y_{n},r\right) $. For $n$ large enough, we have 
\begin{align}
\left\vert \rho^{S_n}\left( x_{n},y_{n}\right) -\rho^S\left( x,y\right)
\right\vert & <\varepsilon/2  \label{1} \\
\rho^{S_n}\left( x_{n},z_{n}\right) & \leq\lim\sup\rho^{S_n}\left(
x_{n},z_{n}\right) +\varepsilon/2=\rho^S\left( x,z^{\prime}\right)
+\varepsilon /2  \notag
\end{align}
for some limit point $z^{\prime}$ of $\left\{ z_{n}\right\}$. By Lemma \ref%
{LIDCv}, such a point $z^{\prime}$ must belong to $S\left( x,r\right) $,
whence 
\begin{equation*}
\rho^{S_n}\left( x_{n},z_{n}\right) \leq\rho^S_{x}\left( y\right)
-\varepsilon /2\text{,}
\end{equation*}
which, together with (\ref{1}), leads to 
\begin{equation*}
\rho^{S_n}_{x_{n}}\left( z\right) \leq\rho^{S_n}_{x_{n}}\left( z_{n}\right)
<\rho^{S_n}_{x_{n}}\left( y_{n}\right)
\end{equation*}
for any point $z\in S\left( y_{n},r\right) $. Hence, by Lemma \ref{LMbB}, $%
\rho^{S_n}_{x_{n}}$ has a local maximum in $B\left( y_{n},r\right) $.
\end{proof}

\bigskip

Lemmas \ref{perturb_max} and \ref{loc-max} (ii) imply the following

\begin{corollary}
\label{Open_M} For any polyhedral convex surface, each set $\mathcal{M}_{k}$
is open in $\mathcal{T}$ and each set $\tilde{\mathcal{M}}_{k}$ is open in $%
\overline{\mathcal{T}}$, $k \geq2$.
\end{corollary}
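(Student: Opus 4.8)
The plan is to prove that each $\mathcal{M}_k$ is open by showing that $\mathcal{M}_k$ is \emph{stable under small perturbations}: if $T \in \mathcal{M}_k$, then every tetrahedron sufficiently close to $T$ also lies in $\mathcal{M}_k$. Concretely, suppose $T \in \mathcal{M}_k$, so there is a point $x \in T$ with $\#M_x \geq k$; pick $k$ distinct local maxima $y^{(1)}, \ldots, y^{(k)}$ of $\rho^T_x$. Let $\{T_n\}$ be any sequence in $\mathcal{T}$ converging to $T$. The goal is to produce, for $n$ large, a point $x_n \in T_n$ with at least $k$ distinct local maxima of $\rho^{T_n}_{x_n}$, thereby certifying $T_n \in \mathcal{M}_k$ and hence that the complement of $\mathcal{M}_k$ is closed.

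First I would fix the basepoints: choose $x_n \in T_n$ converging to $x$ (possible since $T_n \to T$ in the Pompeiu--Hausdorff metric), and for each $j$ choose $y_n^{(j)} \in T_n$ converging to $y^{(j)}$. By Lemma~\ref{loc-max}(ii), each $y^{(j)}$ is a \emph{strict} local maximum of $\rho^T_x$, so Lemma~\ref{perturb_max} applies directly. Choosing a radius $r>0$ smaller than half the minimum pairwise intrinsic distance among the $y^{(j)}$, the lemma furnishes, for each fixed $j$ and all $n$ beyond some threshold $N_j$, a local maximum of $\rho^{T_n}_{x_n}$ inside the ball $B(y_n^{(j)}, r)$. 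Taking $N = \max_j N_j$, for all $n > N$ the function $\rho^{T_n}_{x_n}$ has a local maximum in each of the $k$ balls $B(y_n^{(j)}, r)$.

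The key step is to guarantee that these $k$ local maxima are genuinely distinct, so that $\#M_{x_n} \geq k$. This is where the disjointness of the balls matters: since the $y^{(j)}$ are distinct points of the compact surface $T$, their pairwise intrinsic distances have a positive minimum $\delta$; choosing $r < \delta/4$ ensures the balls $B(y^{(j)}, r)$ are pairwise disjoint, and by convergence $y_n^{(j)} \to y^{(j)}$ the balls $B(y_n^{(j)}, r)$ are also pairwise disjoint for $n$ large. Hence the $k$ local maxima, one per ball, are distinct, giving $\#M_{x_n} \geq k$ and $T_n \in \mathcal{M}_k$. This shows $\mathcal{M}_k$ is open in $\mathcal{T}$; the identical argument, carried out in $\overline{\mathcal{T}}$ using convergence of surfaces guaranteed by Lemma~\ref{Close_unf} and the metric convergence of Lemma~\ref{LIDCv}, shows $\tilde{\mathcal{M}}_k$ is open in $\overline{\mathcal{T}}$.

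I expect the main obstacle to be purely bookkeeping rather than conceptual: verifying that the single radius $r$ can be chosen small enough to satisfy \emph{simultaneously} the strict-local-maximum condition of Lemma~\ref{perturb_max} at every $y^{(j)}$ and the pairwise-disjointness condition. Both are achieved by taking $r$ below a finite minimum of positive quantities, so no real difficulty arises, but one must be careful that shrinking $r$ to separate the balls does not invalidate the hypothesis $\varepsilon > 0$ used inside Lemma~\ref{perturb_max}; this is fine because that hypothesis only improves as $r$ decreases. The appeal to Lemma~\ref{loc-max}(ii) to upgrade ``local maximum'' to ``strict local maximum'' is essential, since Lemma~\ref{perturb_max} requires strictness, and this is exactly what the excerpt already records.
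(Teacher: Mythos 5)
Your proof is correct and takes essentially the same route as the paper: the paper's entire proof of this corollary is the one-line remark that Lemmas \ref{perturb_max} and \ref{loc-max}(ii) imply it, and your write-up just supplies the routine details (strictness via Lemma \ref{loc-max}(ii), disjoint balls around the $k$ maxima, and the sequential characterization of openness in a metric space). The only blemish is the appeal to Lemma \ref{Close_unf} for the $\overline{\mathcal{T}}$ case, which is unnecessary: Lemma \ref{perturb_max} is stated for arbitrary convergent sequences of convex surfaces, degenerate or not, so it covers $\tilde{\mathcal{M}}_k$ directly.
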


\begin{lemma}
\label{Lm3} The isosceles tetrahedra belong to $\mathcal{M}_{2} \backslash 
\mathcal{M}_{3}$.
\end{lemma}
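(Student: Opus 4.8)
The plan is to establish two things for an isosceles tetrahedron $T$: that some point $x$ has at least two local maxima (so $T \in \mathcal{M}_2$), and that no point $x$ has three or more local maxima (so $T \notin \mathcal{M}_3$). The first part is automatic: since $\mathcal{M}_2 = \mathcal{F}_2 = \mathcal{T}$ holds for all tetrahedra (cited from \cite{MonMien4}), in particular every isosceles tetrahedron lies in $\mathcal{M}_2$. So the real content is showing $T \notin \mathcal{M}_3$, i.e. for every $x \in T$ we have $\#M_x \leq 2$.

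\medskip

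The key structural fact about isosceles tetrahedra is that every vertex has total angle exactly $\pi$, hence curvature $\omega_v = 2\pi - \pi = \pi$ at each of the four vertices. First I would recall from Lemma \ref{L2} that $C(x)$ is homeomorphic to one of $H$, $X$, $Y$, or $I$, and from Lemma \ref{loc-max} that $M_x$ consists only of leaves and junction points of $C(x)$, with a leaf $y$ belonging to $M_x$ if and only if $\omega_y > \pi$. Since every vertex of an isosceles tetrahedron has curvature exactly $\pi$ (not strictly greater), and by Lemma \ref{basic}(i) the leaves of $C(x)$ are vertices of $T$, the criterion in Lemma \ref{loc-max}(iv) shows that \emph{no leaf} of $C(x)$ can be a local maximum. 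Therefore $M_x$ is contained in the set of junction points of $C(x)$.

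\medskip

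The next step is to bound the number of junction points. For a tree homeomorphic to $H$, $X$, $Y$, or $I$, the number of interior junction points (vertices of degree at least two that are not leaves) is at most two: an $H$ has two degree-three junctions, an $X$ has one degree-four junction, a $Y$ has one degree-three junction, and an $I$ (a simple arc) has no junctions of degree $\geq 3$, though by the convention in Lemma \ref{loc-max}(iii) a degree-two vertex also counts. I would handle the $I$ case separately: by Lemma \ref{deg}, if $C(x)$ is an arc then $T$ is a doubly covered polygon, which is degenerate and excluded. So for a genuine isosceles tetrahedron the cut locus is $H$, $X$, or $Y$, giving at most two junction points, hence $\#M_x \leq 2$.

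\medskip

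The main obstacle I anticipate is the careful treatment of the junction-point count and of degree-two vertices. One must verify that a junction point of $C(x)$ of degree $d$ genuinely qualifies as a local maximum only when the relevant angle condition of Lemma \ref{loc-max}(ii) holds, and confirm that the topological type together with the curvature-$\pi$ constraint forces at most two such points; in particular the $H$ case (two junctions) must be checked to be compatible with both being local maxima, while ruling out any configuration producing three. The angle-bisection property in Lemma \ref{basic}(iv), combined with the fact that all vertex curvatures equal $\pi$, should force enough symmetry to pin down the cut locus type and complete the bound, but making this rigorous for every position of $x$ is the delicate part.
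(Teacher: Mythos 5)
Your proposal is correct and takes essentially the same route as the paper: exclude every leaf of $C(x)$ as a local maximum because the vertex curvatures of an isosceles tetrahedron equal exactly $\pi$ (Lemma \ref{loc-max}(iv)), then bound $\#M_x$ by counting junction points in each cut-locus type from Lemma \ref{L2}. The only differences are cosmetic --- you explicitly dispatch the $I$-case via Lemma \ref{deg} and explicitly invoke $\mathcal{F}_2=\mathcal{T}$ for membership in $\mathcal{M}_2$, both of which the paper leaves implicit --- and your final paragraph's worry is unfounded, since the junction-point count alone already yields $\#M_x\leq 2$ without verifying which junctions actually are maxima.
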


\begin{proof}
Let $T$ be an isosceles tetrahedron and $x$ a point in $T$.

Notice that a vertex $v$ of $T$ which is also a leaf of $C(x)$ cannot be a
local maximum for $\rho_{x}$. Indeed, the total angle of $T$ at $v$ is $\pi$%
, and there is only one segment from $x$ to $v$. Therefore (see Lemma \ref%
{L2}), if $C(x)$ is an $H$-tree then $\#M_{x} \leq2$, if $C(x)$ is an $X$%
-tree then $\#M_{x} = 1$, and if $C(x)$ is an $Y$-tree then $\#M_{x} \leq2$.
\end{proof}

\begin{lemma}
$\mathrm{int} \left( \mathcal{M}_{3} \setminus\mathcal{M}_{4} \right)
\neq\varnothing$.
\end{lemma}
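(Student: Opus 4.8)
The plan is to exhibit an open set $\mathcal{U}\subset\mathcal{T}$ with $\mathcal{U}\subset\mathcal{M}_{3}$ and $\mathcal{U}\cap\mathcal{M}_{4}=\varnothing$. Since $\mathcal{M}_{3}$ is open (Corollary \ref{Open_M}), it suffices to produce a single tetrahedron $T_{0}\in\mathcal{M}_{3}$ admitting a neighborhood disjoint from $\mathcal{M}_{4}$. The starting observation is a Gauss--Bonnet constraint: the four curvatures satisfy $\omega_{1}+\omega_{2}+\omega_{3}+\omega_{4}=4\pi$, so at most three vertices can be \emph{very acute}, i.e. satisfy $\omega_{i}>\pi$. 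By Lemma \ref{loc-max}(iv) a leaf of a cut locus belongs to $M_{x}$ only if its curvature exceeds $\pi$; hence for every $x$ the leaves of $C(x)$ contribute at most three points to $M_{x}$. Consequently $\#M_{x}\geq 4$ is possible only if some junction point of $C(x)$ is a local maximum. This reduces the whole problem to finding tetrahedra on which, robustly, no junction point of any cut locus is a local maximum.

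For the construction I would take $T_{0}$ to be a small perturbation of the double of a suitable acute, scalene and elongated triangle $\Delta=ABC$, obtained by pushing the fourth, flat vertex $D$ slightly off the plane; Lemma \ref{Alexandrov} guarantees that the resulting object is a genuine (nearly degenerate) tetrahedron, and Lemma \ref{Close_unf} controls the convergence. On $T_{0}$ the vertices $A,B,C$ remain very acute while $\omega_{D}$ stays below $\pi$, and these are open conditions. Taking $x_{0}=D$, Lemmas \ref{basic} and \ref{L2} together with Lemma \ref{deg} (non-degeneracy rules out an arc) force $C(D)$ to be a $Y$-tree with leaves $A,B,C$ and a single junction $J$. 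By Lemma \ref{loc-max} the three very acute leaves lie in $M_{D}$, and by choosing $\Delta$ sufficiently asymmetric I would arrange the three segments from $D$ to $J$ to leave an angular gap $\geq\pi$ at $J$, so that $J\notin M_{D}$. This gives $\#M_{D}=3$, hence $T_{0}\in\mathcal{M}_{3}$, and openness provides a neighborhood contained in $\mathcal{M}_{3}$.

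It remains to shrink this neighborhood so that it avoids $\mathcal{M}_{4}$, that is, to show $\#M_{x}\leq 3$ for every $x$ on every tetrahedron close to $T_{0}$. By the reduction above this amounts to proving that no junction point of any cut locus is a local maximum, equivalently (Lemma \ref{loc-max}(ii)) that for every $x$ and every point $y$ of degree $\geq 3$ in $C(x)$ the segments from $x$ to $y$ lie within a closed half-disk around $y$. Here I would exploit that $T_{0}$ is a thin, elongated near-double: the source and star unfoldings (Lemmas \ref{Source_unf}, \ref{Star_unf}) of such a surface are stretched along one direction, forcing the several segments reaching any junction to bunch together and hence to leave a gap $\geq\pi$. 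To pass from $T_{0}$ to a full neighborhood I would argue by contradiction and compactness: were there a sequence $T_{n}\to T_{0}$ with points $x_{n}$ having four local maxima $y_{n}^{1},\dots,y_{n}^{4}$, I would extract convergent subsequences $x_{n}\to x$ and $y_{n}^{i}\to y^{i}$ and, using Lemmas \ref{LIDCv} and \ref{perturb_max}, transfer the surrounding configuration at the limiting junction back to $T_{0}$, contradicting the half-disk property just established.

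The main obstacle is exactly this last, uniform-in-$x$ control of junction points: maximality of a junction is a delicate angular condition, the base point $x$ ranges over the entire surface, and under perturbation local maxima may split or merge, so the limiting argument must be handled with care. Making precise the claim that the elongated geometry forces the segments incident to \emph{every} junction into a half-disk, and that this property is stable under the limits produced by Lemma \ref{Close_unf}, is where the real work lies.
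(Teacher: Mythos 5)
Your reduction (Gauss--Bonnet gives at most three very acute vertices, so $\#M_{x}\geq 4$ forces a junction point of $C(x)$ to be a local maximum) is correct, but it steers you into proving that on your tetrahedron \emph{no} junction of \emph{any} cut locus is a local maximum, uniformly in $x$ and stably under perturbation. You acknowledge this is unproven; the real problem is that for your chosen example it is \emph{false}. Take the double of any acute triangle $ABC$ and let $x$ be the incenter of one face, $x'$ the corresponding point of the other face, $r$ the inradius. Any path from $x$ to $x'$ crossing the boundary at $p$ has length at least $|x-p|+|p-x'|=2|x-p|\geq 2r$ (extrinsically $x$ and $x'$ coincide), so the three paths through the tangency points of the inscribed circle, each of length $2r$ and meeting the edges perpendicularly, are segments. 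The angles at $x'$ between consecutive segments are $\pi-A$, $\pi-B$, $\pi-C$, all less than $\pi$, so $x'\in M_{x}$ by Lemma \ref{loc-max}(ii). Since the triangle is acute, each vertex has total angle $2A,2B,2C<\pi$ on the double, hence also lies in $M_{x}$. Thus $\#M_{x}\geq 4$, the double lies in $\tilde{\mathcal{M}}_{4}$, and by Corollary \ref{Open_M} \emph{every} tetrahedron sufficiently close to it --- in particular your $T_{0}$, however the flat vertex is pushed off the plane --- lies in $\mathcal{M}_{4}$, not in $\mathcal{M}_{3}\setminus\mathcal{M}_{4}$. Elongation does not rescue the argument: an elongated acute triangle is necessarily nearly isosceles with base angles close to $\pi/2$, and a direct computation for the thin isosceles double shows the three segments reach the junction at angles $2\delta$, $\pi-\delta$, $\pi-\delta$ with $\delta$ small --- all strictly less than $\pi$, so the junction is again a local maximum; your heuristic that thinness forces an angular gap $\geq\pi$ is exactly backwards.

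The paper avoids this trap by inverting the counting. Instead of bounding leaf maxima (at most $3$ by Gauss--Bonnet) and fighting the junctions, it bounds junction maxima: by Lemma \ref{L2} a cut locus on a tetrahedron has at most two junction points, so any $T\in\mathcal{M}_{4}$ must have at least \emph{two} very acute vertices. It then takes the double of a quadrilateral with one acute and three obtuse angles (a perturbed non-square rectangle): membership in $\tilde{\mathcal{M}}_{3}$ comes from the two junction-point maxima of the $H$-shaped cut locus of a near-central point plus the single acute vertex, while exclusion from $\mathcal{M}_{4}$ is automatic and robust, since having at most one very acute vertex is an open condition and caps $\#M_{x}$ at $2+1=3$ for every $x$. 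The lesson is that the exclusion criterion must be a stable, vertex-counting condition rather than an angular condition at junction points, which is delicate and, as the incenter example shows, fails precisely for near-doubles of acute triangles.
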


\begin{proof}
Recall that all cut loci on tetrahedra have at most two flat junction
points, by Lemma \ref{L2}. This and Lemma \ref{loc-max} imply that each
tetrahedron in $\mathcal{M}_{4}$ has at least two very acute vertices.
Therefore, it suffices to provide an open subset of $\mathcal{M}_{3}$, each
tetrahedron of which has at most one very acute vertex.

Let $R$ be a non-square rectangle of center $o$. The cut-locus $%
C\left(o\right)$ on the double of $R$ is a $H$-tree and its junctions points 
$u,w$ clearly belong to $M_{o}$. Consider a quadrilateral $Q$ close to $R$,
with one vertex acute and other three obtuse, and a point $x$ on the double $%
D$ of $Q$, close to $o$. By Lemma \ref{perturb_max}, there is at least one
point of $M_{x}$ near $u$, and another one near $w$. Of course, the acute
vertex of $Q$ also belongs to $M_{x}$, whence $D\in\tilde{\mathcal{M}_{3}}$.
Now, tetrahedra close enough to $D$ cannot belong to $\mathcal{M}_{4}$
because they have only one very acute vertex, and so they are interior to $%
\mathcal{M}_{3}\backslash\mathcal{M}_{4}$, by Corollary \ref{Open_M}.
\end{proof}

\begin{lemma}
$\mathrm{int} \left( \mathcal{M}_{4} \setminus\mathcal{M}_{5}\right)
\neq\varnothing$.
\end{lemma}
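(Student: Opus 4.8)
The goal is to exhibit a nonempty open subset of $\mathcal{M}_{4}\setminus\mathcal{M}_{5}$. Following the template of the previous lemma, the plan is to first locate a degenerate surface in $\overline{\mathcal{T}}$ realizing four local maxima in a robust (strict) way, and then invoke the perturbation machinery together with a structural upper bound on $\#M_x$ valid for nearby tetrahedra.

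First I would produce a doubly covered polygon on which some distance function has exactly four strict local maxima. A natural candidate is the double $D$ of a suitable quadrilateral $Q$ having all four vertices acute, so that each vertex is a genuine leaf with $\omega_v>\pi$ and hence, by Lemma \ref{loc-max}(iv), a local maximum of $\rho_x$ whenever that vertex is a leaf of the cut locus. For a central (or near-central) source point $x$ on the double, the cut locus is an $H$-tree whose four leaves are precisely the four vertices; by choosing $Q$ appropriately one arranges that all four vertices are leaves of $C(x)$ and that the two junction points are \emph{not} local maxima (e.g. by keeping the junction angles $\geq\pi$). This gives $\#M_x=4$ with each maximum strict, so $D\in\tilde{\mathcal{M}}_4$.

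Second, I would transfer this to genuine tetrahedra by perturbation. Applying Lemma \ref{perturb_max} at each of the four strict local maxima separately yields, for every tetrahedron $T$ sufficiently close to $D$ and every source $x'$ close to $x$, at least four distinct local maxima of $\rho_{x'}$ (the four small balls around the perturbed vertices being pairwise disjoint), so a whole neighborhood of $D$ lies in $\mathcal{M}_4$; by Corollary \ref{Open_M} these sets are open anyway. The crux is the reverse bound: I must ensure that nearby tetrahedra have \emph{at most four} local maxima, so they avoid $\mathcal{M}_5$. Here I would reuse the counting argument from Lemma \ref{L2} and Lemma \ref{loc-max}: on a tetrahedron the cut locus is homeomorphic to $H$, $X$, $Y$, or $I$, so it has at most four leaves and at most two junction points, giving a combinatorial ceiling. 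To force $\#M_x\leq 4$ uniformly, I would choose $Q$ so that the perturbed tetrahedra have exactly four very acute vertices and the junction points remain non-maximal; with at most four leaves contributing and the (at most two) junctions excluded, one gets $\#M_{x'}\leq 4$ for all $x'$.

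The main obstacle is precisely controlling the junction points under perturbation: a priori an $H$-tree could degenerate into an $X$-tree, or a junction angle could drop below $\pi$ and create a fifth local maximum. To handle this I would argue that the relevant angle condition of Lemma \ref{loc-max}(ii) at the junctions is an open condition that holds strictly on $D$ and therefore persists on a neighborhood, using continuity of the cut-locus data (via Lemmas \ref{LIDCv} and \ref{Close_unf}) to guarantee that the combinatorial type and the strict angle inequalities are stable. Once this stability is established, the neighborhood of $D$ lies in $\mathcal{M}_4\setminus\mathcal{M}_5$ and is open, proving $\mathrm{int}\left(\mathcal{M}_4\setminus\mathcal{M}_5\right)\neq\varnothing$.
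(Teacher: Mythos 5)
There are two genuine gaps here, one elementary and one structural. The elementary one: your candidate surface does not exist. For the double of a quadrilateral $Q$ to be a convex surface (hence a point of $\overline{\mathcal{T}}$), $Q$ must be convex, and the interior angles of a planar convex quadrilateral sum to exactly $2\pi$; therefore at most three of them can be acute. Since the total angle of the double at a vertex $v$ is twice the planar angle of $Q$ there, at most three vertices of the double can satisfy $\omega_v>\pi$. So there is no double of a quadrilateral with all four vertices very acute, and your first step already fails.

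The structural gap lies in how you propose to exclude $\mathcal{M}_5$, and it would persist even if you replaced the double by some other surface with four very acute vertices. Membership $T\in\mathcal{M}_5$ is an existential statement: it requires only \emph{some} point $x'\in T$ with $\#M_{x'}\geq 5$. Your stability argument (persistence of the strict angle inequalities at the junction points, via Lemmas \ref{LIDCv} and \ref{Close_unf}) controls the cut loci $C(x')$ only for source points $x'$ near your distinguished point $x$ and tetrahedra near $D$; it says nothing about source points elsewhere on the surface. But on a tetrahedron with four very acute vertices, Lemma \ref{loc-max}(iv) makes \emph{every} leaf of \emph{every} cut locus $C(x')$ a local maximum of $\rho_{x'}$, so any point $x'$ anywhere on $T$ whose cut locus is an $H$-tree already has four local maxima, and a single junction point of $C(x')$ satisfying the angle condition of Lemma \ref{loc-max}(ii) would put $T$ in $\mathcal{M}_5$. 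You would have to rule this out for all $x'\in T$ and all $T$ near $D$, which a perturbation argument localized at the pair $(D,x)$ cannot do. The paper avoids this by inverting your choice of example: since a tetrahedral cut locus has at most two junction points (Lemma \ref{L2}), any $T\in\mathcal{M}_5$ must have at least \emph{three} very acute vertices, so it suffices to exhibit tetrahedra of $\mathcal{M}_4$ with at most \emph{two} very acute vertices. Concretely, the paper takes $Q$ spanned by four consecutive vertices of a regular hexagon and $x$ its centre, so that $M_x$ consists of the two acute vertices (leaves of $C(x)$) together with the two obtuse vertices (junction points); ``at most two very acute vertices'' is a global, open condition on the tetrahedron, hence it yields the uniform bound $\#M_{x'}\leq 2+2=4$ for every source point $x'$ on every nearby tetrahedron, while Corollary \ref{Open_M} keeps those tetrahedra in $\mathcal{M}_4$.
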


\begin{proof}
Lemmas \ref{L2} and \ref{loc-max} imply that each tetrahedron in $\mathcal{M}%
_{5}$ has at least three very acute vertices. Therefore, it suffices to
provide an open subset of $\mathcal{M}_{4}$, each tetrahedron of which has
at most two very acute vertices.

Consider a planar convex quadrilateral $Q=v_{1}v_{2}v_{3}v_{4}$ such that $%
v_{1},v_{2},v_{3},v_{4}$ are consecutive vertices in a regular hexagon of
centre $x$.

Then $M_{x} = \{v_{1}, v_{2}, v_{3}, v_{4}\}$ on the double $R$ of $Q$,
hence $R \in\tilde{\mathcal{M}}_{4}$. Since $R$ has only two very acute
vertices, we get $R \not \in \tilde{\mathcal{M}}_{5}$.

To end the proof, notice that small perturbations of $R$ in $\overline {%
\mathcal{T}}$ provide tetrahedra with precisely two very acute vertices,
which are therefore in $\mathcal{M}_{4} \setminus \mathcal{M}_5$ by
Corollary \ref{Open_M}.
\end{proof}

\begin{lemma}
$\mathrm{int} \left( \mathcal{M}_{5}\right) \neq\varnothing$.
\end{lemma}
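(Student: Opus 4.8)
The plan is to exhibit an open subset of $\mathcal{T}$ all of whose tetrahedra lie in $\mathcal{M}_5$; by Corollary~\ref{Open_M} this suffices, since $\mathcal{M}_5$ is already open, but more importantly I want a configuration that \emph{robustly} forces five local maxima. The preceding lemmas show that membership in $\mathcal{M}_5$ requires at least three very acute vertices, so I must build a tetrahedron with three very acute vertices whose cut locus (from a suitable point $x$) realizes five local maxima simultaneously. Following the pattern of the earlier lemmas, I would first construct a degenerate model in $\overline{\mathcal{T}}$ — a doubly covered convex quadrilateral $R = $ double of $Q$ — for which a central point $x$ has $\#M_x \geq 5$, and then perturb.

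First I would choose the quadrilateral $Q$ and the point $x$ so that the cut locus $C(x)$ on $D$ is an $H$-tree (or $X$-tree) with both junction points being genuine local maxima, and so that \emph{three} of the four vertices of $Q$ are very acute (curvature $\omega > \pi$ on the double, i.e. planar angle less than $\pi/2$ at those corners). By Lemma~\ref{loc-max}(iv), each very acute vertex that is a leaf of $C(x)$ is a local maximum; combined with the two junction-point maxima of the $H$-tree, this gives $3 + 2 = 5$ points in $M_x$. Concretely I expect to take $Q$ with three acute angles and one obtuse angle (the angle sum is $2\pi$, so three angles below $\pi/2$ force the fourth above $\pi/2$, which is consistent), positioned so that $x$ sees all four vertices and the $H$-configuration of the cut locus is stable. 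Thus $D \in \tilde{\mathcal{M}}_5$.

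Next I would perturb the vertices of $Q$ in $\overline{\mathcal{T}}$ to obtain genuine (non-degenerate) tetrahedra near $D$. The three very acute vertices persist under small perturbation since ``$\omega_v > \pi$'' is an open condition, and the two junction-point maxima persist by Lemma~\ref{perturb_max} applied with $S_n \to D$: near each of the two $H$-junction points there remains a local maximum of $\rho^{S_n}_{x_n}$ for a nearby base point $x_n$. So for all tetrahedra $T$ in a neighborhood of $D$ we retain $\#M_{x} \geq 5$, whence this neighborhood lies in $\mathcal{M}_5$ and $\intr(\mathcal{M}_5) \neq \varnothing$.

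The main obstacle I anticipate is the \emph{simultaneous} survival of the junction-point maxima under perturbation. Lemma~\ref{perturb_max} guarantees a local maximum near a \emph{strict} local maximum of the limit function, so I must verify that the two $H$-junctions are strict local maxima on $D$ — equivalently, by Lemma~\ref{loc-max}(ii), that at each junction the angle between any two consecutive segments back to $x$ is strictly less than $\pi$. This is where the choice of $Q$ matters: I need the $H$-tree (rather than a degenerate $X$- or $I$-configuration) to be the stable cut-locus type, so that the two junctions are honest degree-three points with three segments to $x$ meeting at angles below $\pi$. Checking this for the explicit $Q$, and confirming that the perturbed tetrahedra do not collapse the $H$ into an $X$ (which would merge the two junctions and drop to four maxima), is the delicate point; once the strictness is in hand, the openness conclusion follows routinely from Corollary~\ref{Open_M}.
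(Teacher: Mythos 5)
Your overall strategy --- exhibit a degenerate model in $\overline{\mathcal{T}}$ with five local maxima, then invoke openness (Corollary \ref{Open_M}) and the density of genuine tetrahedra near a doubly covered quadrilateral --- is the same as the paper's, and the configuration you aim for (an $H$-tree cut locus whose two junctions, together with three very acute leaves, all lie in $M_x$) is in fact realizable. But as written the proof has a genuine gap, which you yourself flag: the entire content of the lemma is the existence of a concrete example, and you never produce one. You specify neither $Q$ nor $x$, and you never verify the one nontrivial condition, namely that both junctions of $C(x)$ are strict local maxima of $\rho_x$, i.e., that at each junction the three segments from $x$ arrive with all consecutive angles less than $\pi$. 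The count of very acute vertices only gives a necessary condition for membership in $\mathcal{M}_5$; it is no evidence that a quadrilateral with your required cut-locus behaviour exists. Without that verification, ``$3+2=5$'' is an aspiration, not a proof; moreover, your perturbation step applies Lemma \ref{perturb_max} to exactly these two junctions, so the unverified base case is load-bearing.

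The gap is closable, and more easily than your last paragraph suggests. Start from the paper's own $\mathcal{M}_3$ construction: the double of a non-square rectangle $R$ with $x=o$ its center, where $C(o)$ is an $H$-tree whose junctions $u,w$ belong to $M_o$ (hence are \emph{strict} local maxima, by Lemma \ref{loc-max}(ii)). Now perturb $R$ to a convex quadrilateral $Q$ with \emph{three} acute angles (the paper's $\mathcal{M}_3$ proof used one), and take $x$ near $o$ on the double of $Q$. The three very acute vertices are local maxima of $\rho_x$ for trivial reasons: a total angle less than $\pi$ forces every consecutive angle between segments to $x$ to be less than $\pi$, so no cut-locus analysis is needed for them. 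Lemma \ref{perturb_max} then supplies one local maximum near $u$ and one near $w$; choosing $r$ smaller than the mutual distances between $u$, $w$ and the vertices keeps all five points distinct. This also dissolves the worry in your final paragraph: Lemma \ref{perturb_max} makes no assumption whatsoever about the cut locus of the perturbed surface, so you never need to rule out the $H$-tree collapsing to an $X$-tree. For comparison, the paper proceeds quite differently: it glues an explicit planar polygon via Alexandrov's theorem into a surface whose cut locus from a flat point $x$ is a $Y$-tree, and gets $M_x$ consisting of three very acute leaves, one non-acute vertex of degree two in $C(x)$, and one flat junction point.
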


\begin{proof}
By constructing an explicit example, we show next that $\tilde{\mathcal{M}}%
_{5} \neq\varnothing$. This and Corollary \ref{Open_M} would then imply $%
\mathrm{int} \left( \mathcal{M}_{5}\right) \neq\varnothing$.

Consider the planar polygon $L$ drawn in Figure \ref{F1}, where the
line-segments marked with X are all equal, as are those marked with II. The
two big rhombi have angles $\pi/4$ and $3\pi/4$, and the smaller one to the
right has angles $\pi/8$ and $7\pi/8$. Our figure has horizontal symmetry. 
\begin{figure}[th]
\centering
\includegraphics[width=0.65\textwidth]{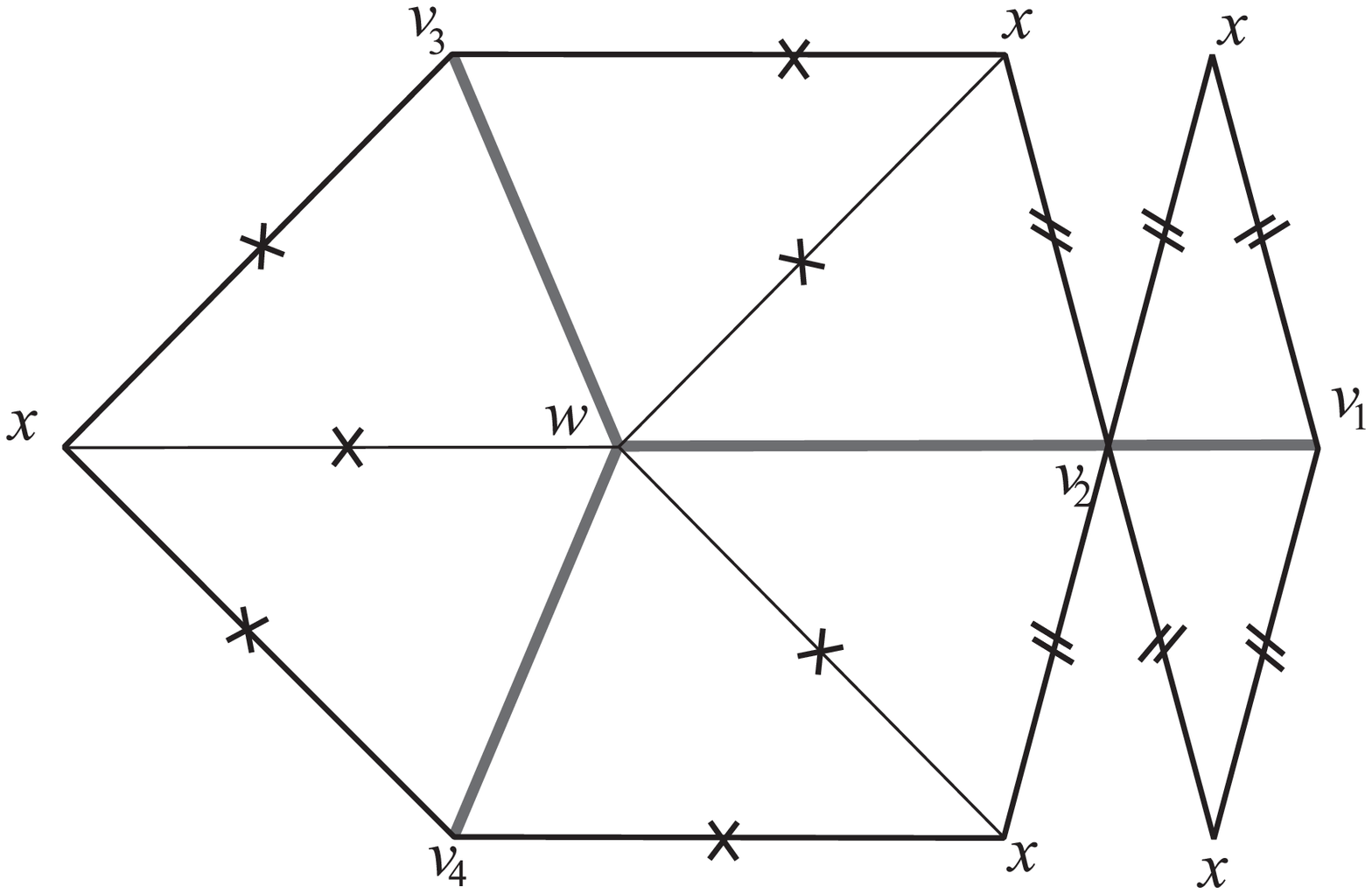}
\caption{Unfolding of a tetrahedron $T$ in $\tilde{\mathcal{M}}_{5}$,
obtained by cutting along the segments from $x\in T$ to the vertices of $T$. 
}
\label{F1}
\end{figure}

In order to obtain a polyhedral convex surface $T$, glue the sides of $L$ as
follows: identify the two sides incident to $v_{i}$, $i=1,3,4$, and identify
the upper sides incident to $v_{2}$, as well as the lower sides incident to $%
v_{2}$. These identifications are all possible, due to the length equalities.

Notice that the total angle $\theta_{i}$ at the point $v_{i}$ after gluing
verifies $\theta_{1}=7\pi/8$, $\theta_{2}=13\pi/8$, and $\theta_{3}=%
\theta_{4}=3\pi/4$. Moreover, the total angle $\theta_{x}$ after gluing at $%
x $ verifies $\theta_{x}=2\pi$. By Alexandrov's gluing theorem, the
resulting surface $T$ is a tetrahedron, and the point in $T$ corresponding
to $x\in L$, also denoted by $x$, is a flat point.

One can easily notice that the segments (on $T$) joining $x$ to $w$, $v_{1}$%
, $v_{2}$, $v_{3}$, respectively $v_{4}$, correspond to marked line-segments
in Figure \ref{F1}. Therefore, on $T$, $\angle xv_{2}w<\pi/2$, $\angle
xv_{2}v_{1}<\pi/2$; these inequalities and $\theta_{j}<\pi$, $j=1,3,4$,
imply $M_{x}=\left\{ w,v_{1},v_{2},v_{3},v_{4}\right\}$ (by Lemma \ref%
{loc-max}), whence $T\in\tilde{\mathcal{M}}_{5}$ and the conclusion follows.
\end{proof}

\bigskip

The next lemma follows from Lemma \ref{Lm3} and a more general result in 
\cite{ZamTAMS}.

\begin{lemma}
\label{g} Let $S$ be a convex surface and $x$ a point in $S$. Then $M_{x}$
is contained in a minimal (by inclusion) $Y$-subtree of $C(x)$, possibly
degenerated to an arc or a point.
\end{lemma}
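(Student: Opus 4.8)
The statement is entirely about the tree $C(x)$ and the closed set $M_{x}\subset C(x)$, so the plan is to pass to the minimal subtree $\tau$ of $C(x)$ spanned by $M_{x}$, i.e. the union of all arcs of $C(x)$ joining pairs of points of $M_{x}$. Any subtree of $C(x)$ containing $M_{x}$ contains $\tau$, so once I know that $\tau$ has at most three endpoints it is, up to homeomorphism, a letter $Y$ or one of its degenerations (an arc or a point), and it is automatically the \emph{minimal} $Y$-subtree containing $M_{x}$. Thus the whole statement reduces to the single assertion: \emph{$\tau$ has at most three endpoints}.

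First I would record the inputs from Lemma \ref{loc-max}: $M_{x}\subset C(x)$; every point of $M_{x}$ is a leaf or a junction of $C(x)$; and a leaf $y$ of $C(x)$ belongs to $M_{x}$ if and only if $\omega_{y}>\pi$. By minimality of $\tau$, each of its endpoints lies in $M_{x}$. So I must bound the number of endpoints of $\tau$, and these endpoints are of two kinds: genuine leaves of $C(x)$, and junction points of $C(x)$ that happen to be extreme for $M_{x}$.

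Next I would run a curvature budget. To each endpoint $\ell$ of $\tau$ I attach the region $R_{\ell}$ of $S$ lying \emph{beyond} $\ell$ relative to $\tau$ (the part of $S$ cut off on the side of $\ell$ that misses the rest of $\tau$); since $\tau$ is a tree these regions are pairwise disjoint. When $\ell$ is a leaf of $C(x)$, Lemma \ref{loc-max}(iv) gives at once that $R_{\ell}$ carries curvature $\omega_{\ell}>\pi$. In the junction case I would apply Gauss--Bonnet to the geodesic bigon bounded by two segments from $x$ to $\ell$ that flank the $\tau$-branch at $\ell$: the local-maximum angle condition of Lemma \ref{loc-max}(ii) (every sector between consecutive segments is $<\pi$) forces the interior angle of this bigon at $\ell$, hence the total curvature enclosed, to exceed $\pi$, so again the curvature of $R_{\ell}$ is larger than $\pi$. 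Since the total curvature of a convex surface equals $4\pi$ and the $R_{\ell}$ are disjoint, strictly more than three endpoints would demand strictly more than $4\pi$ of curvature; hence $\tau$ has at most three endpoints. The borderline situation, where an endpoint contributes exactly $\pi$ and the tree collapses, is precisely the one governed by vertices of total angle $\pi$; this is the tetrahedral content of Lemma \ref{Lm3}, which shows $\#M_{x}\le 2$ there, so that $\tau$ degenerates to an arc or a point, in agreement with the statement. For surfaces whose curvature is not concentrated at finitely many cone points I would invoke the more general form of this budget, which is exactly the result of \cite{ZamTAMS}.

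The main obstacle is the junction case of the budget. Unlike a leaf of $C(x)$, a junction point that is an endpoint of $\tau$ need not itself carry curvature $>\pi$: the required curvature is hidden in the off-$\tau$ branches of $C(x)$ hanging from it, and the Gauss--Bonnet accounting has to be set up so that this curvature is genuinely captured by $R_{\ell}$ and is genuinely disjoint from the contributions of the other endpoints. Making this bookkeeping uniform across leaves and junctions, and excluding a fourth independent branch of maxima in the delicate total-angle-$\pi$ configurations, is the hard part; it is here that Lemma \ref{Lm3} and the general result of \cite{ZamTAMS} do the real work.
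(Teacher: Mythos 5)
The paper offers no proof of Lemma \ref{g} at all: it is quoted as a consequence of Lemma \ref{Lm3} and of a more general theorem of Zamfirescu \cite{ZamTAMS}, so the comparison here is with a citation rather than with an argument. Your strategy --- pass to the subtree $\tau$ of $C(x)$ spanned by $M_x$, reduce the lemma to the assertion that $\tau$ has at most three endpoints, and exclude four endpoints by a curvature budget (pairwise disjoint regions, one per endpoint, each of curvature $>\pi$, against the total curvature $4\pi$ of a convex surface) --- is the right one and, once repaired, yields a genuine self-contained proof, which is more than the paper attempts. The leaf case (via Lemma \ref{loc-max}(iv)) and the disjointness of the regions are fine, the latter needing only the routine remark that segments from $x$ cannot cross and that each region meets $\tau$ only in its defining endpoint.

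The junction case, however, contains a step that is false as stated: the sector condition of Lemma \ref{loc-max}(ii) gives \emph{upper} bounds on the angles at $\ell$, so it cannot force the interior angle of your bigon at $\ell$ to exceed $\pi$. The paper itself supplies counterexamples: by Lemma \ref{loc-max}(iii) an endpoint $\ell$ of $\tau$ may be a point of degree two in $C(x)$ (e.g., the vertex of degree two in $C(x)$ for the first-type tetrahedra in the proof of Lemma \ref{dim_F4}), and for such $\ell$ \emph{both} sectors at $\ell$ have angle $<\pi$ precisely because $\ell\in M_x$, so the interior angle of $R_\ell$ at $\ell$ is $<\pi$ --- the opposite of what you claim; the same failure occurs at any junction which is a cone point of small total angle. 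What saves the budget is the curvature sitting at $\ell$ itself, which must be charged to $R_\ell$: if $\alpha_0<\pi$ is the angle of the sector containing the $\tau$-branch, $\theta_\ell$ the total angle at $\ell$, and $\beta_x\geq0$ the angle of the bigon at $x$, then Gauss--Bonnet for the geodesic bigon gives
\begin{equation*}
\omega\left(\mathrm{int}\,R_\ell\right)+\omega_\ell
=\beta_x+\left(\theta_\ell-\alpha_0\right)+\left(2\pi-\theta_\ell\right)
=\beta_x+2\pi-\alpha_0>\pi .
\end{equation*}
This uses only $\alpha_0<\pi$, treats degree-two points and higher-degree junctions uniformly (leaves being covered by Lemma \ref{loc-max}(iv)), and is strict in every case --- so, contrary to your closing paragraph, there is no borderline ``exactly $\pi$'' configuration to exclude, and neither Lemma \ref{Lm3} nor \cite{ZamTAMS} is needed once this accounting is made: four endpoints would force total curvature strictly greater than $4\pi$. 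As written, though, your proof asserts a wrong inequality and then delegates exactly the problematic bookkeeping to the two references, which reproduces the paper's citation-level treatment rather than completing the argument; the decisive missing term is not curvature hidden in the off-$\tau$ branches but the angle deficit $2\pi-\theta_\ell$ concentrated at $\ell$ itself.
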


\begin{lemma}
$\mathcal{M}_{6}=\varnothing$.
\end{lemma}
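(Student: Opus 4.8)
The plan is to reduce the statement to the uniform bound $\#M_{x}\le 5$, valid for every tetrahedron $T$ and every point $x\in T$; since $\mathcal{M}_{6}=\{T\in\mathcal{T}\mid \exists x\in T,\ \#M_{x}\ge 6\}$, this is exactly the claim. The strategy is a counting argument: first classify each element of $M_{x}$ into one of only two kinds, bound the number of available points of each kind, and then show that the two bounds cannot be attained simultaneously.

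First I would observe, via Lemma \ref{loc-max}(iii), that each point of $M_{x}$ is either a leaf or a junction point of $C(x)$ (degree-two vertices being counted as junctions). By Lemma \ref{basic}(i) every leaf of $C(x)$ is a vertex of $T$; a degree-two junction lying in $M_{x}$ is, by the convention in Lemma \ref{loc-max}(iii), a vertex of $T$; and a junction of degree at least three is either a vertex of $T$ or a flat point. Hence every element of $M_{x}$ is \emph{either} a vertex of $T$ \emph{or} a flat branch point of $C(x)$. Since $T$ has four vertices and, by Lemma \ref{L2}, $C(x)$ is homeomorphic to $H$, $X$, $Y$ or $I$ and therefore has at most two branch points, this already yields the weak bound $\#M_{x}\le 4+2=6$.

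The core of the argument is to rule out equality. Suppose $\#M_{x}=6$. Then all four vertices of $T$ and two distinct flat branch points of $C(x)$ belong to $M_{x}$. The presence of two branch points forces, by Lemma \ref{L2}, $C(x)$ to be homeomorphic to $H$; its four leaves are then, by Lemma \ref{basic}(i), precisely the four vertices of $T$, and its two branch points are the two flat points. In particular all four vertices are leaves of $C(x)$, and by Lemma \ref{loc-max}(iv) a leaf lies in $M_{x}$ only if it is very acute, so all four vertices would have to be very acute. This is impossible: the curvatures of a tetrahedron satisfy $\sum_{v}\omega_{v}=4\pi$, while a very acute vertex has $\omega_{v}>\pi$, so at most three vertices can be very acute. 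The contradiction gives $\#M_{x}\le 5$, hence $\mathcal{M}_{6}=\varnothing$.

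The main obstacle is exactly this final incompatibility: showing one cannot simultaneously have four vertex-maxima and two flat junction-maxima. The subtlety is that a vertex need not be a leaf of $C(x)$ (it may sit as a degree-two point on a cut-locus edge, as in the $\tilde{\mathcal{M}}_{5}$ example), so the bound ``at most three very acute vertices'' is decisive only in the \emph{forced} $H$-configuration, where every vertex is a leaf. In writing this up I would verify the two borderline conventions that keep the count tight: that a flat point of degree two cannot be a local maximum, since its two segments split the total angle $2\pi$ into two angles that cannot both be below $\pi$, by Lemma \ref{loc-max}(ii), so flat contributions are confined to genuine branch points; and that if $x$ itself is a vertex the vertex count drops to three, only improving the bound. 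Lemma \ref{g} offers an alternative route, confining $M_{x}$ to a one-branch-point $Y$-subtree, but the vertex/flat-junction dichotomy above already suffices.
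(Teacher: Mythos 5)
Your proof is correct, and it takes a genuinely different route from the paper's. The paper's entire proof is a one-line combination of Lemmas \ref{loc-max} and \ref{g}: since $M_x$ is confined to a $Y$-subtree of $C(x)$ and consists only of leaves and junction points of $C(x)$, that subtree can reach at most three leaves and at most two junction points, so $\#M_x\le 5$; the real work is hidden in Lemma \ref{g}, which rests on the external result of \cite{ZamTAMS}. You never use Lemma \ref{g} (you only mention it in passing as an alternative). Instead, your dichotomy ``vertex of $T$ or flat branch point of $C(x)$'' (from Lemmas \ref{loc-max}(iii), \ref{basic}(i) and \ref{L2}) gives the crude bound $\#M_x\le 4+2=6$, and you exclude equality by a curvature argument: equality would force $C(x)$ to be an $H$-tree whose four leaves are exactly the four vertices of $T$, each of which would need $\omega_v>\pi$ by Lemma \ref{loc-max}(iv), contradicting $\sum_v \omega_v = 4\pi$. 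This is sound, and your borderline checks (a flat point of degree two is never a local maximum; $x$ being a vertex only lowers the count) correctly seal the cases where the dichotomy could leak. Comparing the two: your argument is more elementary and self-contained, needing only the structure lemmas already proved for tetrahedra plus Gauss--Bonnet, which the paper itself uses implicitly (e.g.\ in the curvature estimates of Lemma \ref{thintetra}); the paper's argument is shorter and its key lemma is not tied to the number four, which is why essentially the same mechanism yields $\mathcal{M}_{n+2}=\varnothing$ for general $n$-vertex polyhedra in \cite{MonMien5}, a scope that your final step --- exploiting that all four vertices would have to be very acute --- does not automatically reach.
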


\begin{proof}
This follows from Lemmas \ref{loc-max} and \ref{g}.
\end{proof}


\section{The inclusions $\mathcal{M}_{k} \supset\mathcal{F}_{k}$}

\label{Sect_glmax}

\bigskip

In this section we consider the inclusions $\mathcal{F}_{k}\subset \mathcal{M%
}_{k}$, for $k=3,4$.

\begin{lemma}
\label{thintetra} Let $T=abcd\in\overline{\mathfrak{T}}$ be such that $%
\left\Vert c-\frac{a+b}2 \right\Vert \leq \frac12 \left\Vert a-b\right\Vert
\sin\frac{\pi}{16}$ and $\left\Vert d-\frac{a+b}2 \right\Vert \leq \frac12
\left\Vert a-b\right\Vert \sin\frac{\pi}{16}$. Then $\cup_{x\in
T}F_x=\left\{ a,b\right\}$.
\end{lemma}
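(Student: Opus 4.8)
First I would normalize so that the midpoint $m=\frac{a+b}{2}$ is the origin and $\ell=\frac12\|a-b\|$; then $\|a-m\|=\|b-m\|=\ell$ while $c,d\in B(m,\ell\sin\frac{\pi}{16})$. I record at once that $\rho(a,b)=\|a-b\|=2\ell$, because the edge $ab$ is a curve on $T$ realizing the Euclidean distance, which lower bounds $\rho$. The whole statement then reduces to the single assertion $F_x\subseteq\{a,b\}$ for every $x\in T$. This reduction suffices: applied to $x=a$ it gives $F_a\subseteq\{a,b\}$, and since $F_a\ne\varnothing$ while $a\notin F_a$ (as $\rho_a(a)=0<\rho_a(b)$), we get $b\in F_a$; symmetrically $a\in F_b$. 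Hence $\bigcup_{x\in T}F_x=\{a,b\}$.

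The next step is the angle bookkeeping for which the hypothesis is tailored. Seen from $a$ at distance $\ell$, the ball $B(m,\ell\sin\frac{\pi}{16})$ subtends a cone of half-angle $\arcsin(\sin\frac{\pi}{16})=\frac{\pi}{16}$ about the ray $a\to m=a\to b$, so each of $a\to c$, $a\to d$ makes an angle at most $\frac{\pi}{16}$ with $a\to b$. Thus the face angles at $a$ obey $\angle bac,\angle dab\le\frac{\pi}{16}$ and $\angle cad\le\frac{\pi}{8}$, whence $\theta_a\le\frac{\pi}{4}<\pi$, and likewise $\theta_b\le\frac{\pi}{4}$: both $a,b$ are very acute. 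For $c$ the same cone bound gives $\angle cab,\angle cba\le\frac{\pi}{16}$, so $\angle acb\ge\frac{7\pi}{8}$; the spherical triangle inequality at $c$, namely $\angle acb\le\angle acd+\angle bcd$, then yields $\theta_c\ge 2\angle acb\ge\frac{7\pi}{4}>\pi$, and similarly $\theta_d>\pi$. Hence $c,d$ are \emph{not} very acute.

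For the core inclusion $F_x\subseteq\{a,b\}$ I would argue as follows. By Lemma \ref{loc-max}, $F_x\subseteq M_x\subseteq C(x)$, and each point of $M_x$ is either a leaf of $C(x)$ or a junction point. A leaf is a vertex of $T$, and by Lemma \ref{loc-max}(iv) it is a local maximum only when very acute, so by the previous paragraph it must be $a$ or $b$. Therefore every element of $M_x$ that is not a junction point of $C(x)$ already lies in $\{a,b\}$, and it remains to prove that no junction point $j$ of $C(x)$ is a global maximum, i.e. $\rho(x,j)<\max(\rho(x,a),\rho(x,b))$. Here Lemma \ref{g} confines $M_x$ to a $Y$-subtree of $C(x)$, limiting how many junction points can occur; and since $\max(\rho(x,a),\rho(x,b))\ge\frac12\rho(a,b)=\ell$, it is enough to bound $\rho(x,j)$ strictly below the distance to the farther tip.

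The main obstacle is precisely this last step. I would handle generic $x$ (joined to each vertex by a unique segment) with the star unfolding, Lemma \ref{Star_unf}, and recover the general case by the perturbation Lemmas \ref{perturb_max}, \ref{LIDCv} and \ref{Close_unf}. In the star unfolding, $\rho(x,\cdot)$ is the distance to the set of images of $x$, and by Lemma \ref{Star_unf} the junction points of $C(x)$ are exactly the Voronoi vertices of those source images lying inside the unfolded polygon, $\rho(x,j)$ being their distance to the nearest sources. Since $a,b$ are very sharp cones ($\theta_a,\theta_b\le\frac{\pi}{4}$) while $c,d$ carry almost no curvature, this polygon is thin and elongated with its two extreme tips at the images of $a$ and $b$; the remaining task is to verify that every interior Voronoi vertex is strictly closer to the source set than at least one of these tips. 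Quantifying this comparison with the $\sin\frac{\pi}{16}$ estimates of the second paragraph is the heart of the matter. An alternative route to the same bound is to note that the two segments from $x$ bounding a sector at $j$ must enclose a vertex of $T$ (otherwise the sector develops flat and the two images of $j$ would coincide), and to use the sharp-tip and small-curvature data to show the enclosed vertex is at least as far from $x$ as $j$. Once the junctions are excluded, the third paragraph gives $F_x\subseteq\{a,b\}$ and the first paragraph concludes.
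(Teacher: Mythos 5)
Your setup is sound and, up to the angle bookkeeping, coincides with the paper's: the normalization, the reduction to proving $F_x\subseteq\{a,b\}$ for every $x$ (with the endgame $F_a=\{b\}$, $F_b=\{a\}$), and the estimates $\theta_a,\theta_b\le\pi/4$, $\theta_c,\theta_d>\pi$ are all correct, and they are essentially the paper's first claim. The further reduction via Lemma \ref{loc-max} --- leaves of $C(x)$ in $M_x$ must be very acute vertices, hence in $\{a,b\}$, so only junction points of $C(x)$ remain to be excluded --- is also legitimate.

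However, the proposal has a genuine gap, and you name it yourself: the exclusion of junction points, which you call ``the heart of the matter,'' is never carried out. Both of your sketched routes (Voronoi vertices in the star unfolding, or the enclosed-vertex comparison) are programs rather than proofs: no inequality bounding $\rho(x,j)$ against $\max\left(\rho(x,a),\rho(x,b)\right)$ is ever established, and the star-unfolding route would in addition require a genericity-plus-limit argument to cover all $x$. For comparison, the paper closes exactly this step with a short quantitative argument that needs almost no cut-locus structure: if $y\in F_x\setminus\{a,b\}$, the two or more segments from $x$ to $y$ cut $T$ into domains each containing a vertex, whence there is a flat (folded) Euclidean triangle with vertices $x,a,y$ whose angle $\alpha$ at $a$ is less than $\theta_a\le\pi/4$; the law of cosines together with the farthest-point property $\rho(x,y)\ge\rho(x,a)$ gives $\rho(y,a)\ge\sqrt2\,\rho(x,a)$, and similarly $\rho(y,b)\ge\sqrt2\,\rho(x,b)$, hence $\rho(y,a)+\rho(y,b)\ge\sqrt2\,\rho(a,b)\ge\sqrt2$ (after normalizing $\|a-b\|=1$); on the other hand, a face-by-face check using the thinness hypothesis shows that \emph{every} point $y\in T$ satisfies $\rho(y,a)+\rho(y,b)<1+\sin\frac{\pi}{16}<\sqrt2$, a contradiction. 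Some inequality playing the role of this last comparison --- actually pitting the $\sin\frac{\pi}{16}$ hypothesis against the farthest-point property --- is precisely what your proposal lacks, so as it stands it does not prove the lemma.
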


\begin{proof}
We can assume without loss of generality that $\left\Vert a-b\right\Vert =1$%
. Put $m=\frac{a+b}{2}$ and $\varepsilon=\frac{1}{2}\sin\frac{\pi}{16}$.

We claim that the total angles at $a$ and $b$ are less than $\frac{\pi}{4}$.
By the law of sines, 
\begin{equation*}
\sin\angle cam=\frac{\left\Vert c-m\right\Vert }{\left\Vert a-m\right\Vert }%
\sin\angle acm\leq2\varepsilon\text{.}
\end{equation*}
Similarly $\sin\angle dam \leq 2 \varepsilon$ and, since $\angle
cad\leq\angle dam+\angle cam$, 
\begin{equation*}
\theta_{a}\leq 4 \arcsin\left( 2\varepsilon\right)=\frac{\pi}{4}.
\end{equation*}

By the claim, $\omega_a +\omega_b \geq \frac{7\pi}2$, hence $\omega_c
+\omega_d \leq \frac{\pi}2$ and consequently the total angles at $c$ and $d$
are larger than $\frac{3\pi}2$.

Assume that $y\in F_{x}\backslash\left\{ a,b\right\} $ for some point $x\in
T $.

If $y$ is a vertex, there are at least two segments from $x$ to $y$ (see
Lemma \ref{loc-max}), dividing $T$ into two domains, each of which must
contains at least one vertex. Consider segments $\gamma_{xa}$ and $%
\gamma_{ay}$. These segments separate the domain containing $a$ into two
triangles, at most one of which contains the last vertex. Hence one of these
triangles is a (folded) Euclidean triangle.

If $y$ is a flat point, there are at least three segments from $x$ to $y$,
and $T$ is divided into three domains, each of which contains at least one
vertex. Once again, the domain containing $a$ has at most one interior
vertex distinct to $a$, so there exists a Euclidean triangle with vertices $%
a $,$x$,$y$. Let $\alpha$ be the angle of this triangle at point $a$; the
law of cosines gives%
\begin{equation*}
0\leq\rho\left( x,y\right) ^{2}-\rho\left( x,a\right) ^{2}=\rho\left(
y,a\right) \left( \rho\left( y,a\right) -2\rho\left( x,a\right)
\cos\alpha\right) \text{.}
\end{equation*}

Since $\alpha<\theta_{a}\leq\pi/4$, $\rho\left( y,a\right) \geq\sqrt{2}%
\rho\left( a,x\right) $, and similarly $\rho\left( y,b\right) \geq\sqrt {2}%
\rho\left( b,x\right) $, hence 
\begin{equation*}
\rho\left( y,b\right) +\rho\left( y,a\right) \geq\sqrt{2}\text{.}
\end{equation*}

In remains to prove that no point $y$ can satisfy this inequality.

If $y$ belongs to $abc$ then 
\begin{align*}
\rho\left( y,a\right) +\rho\left( y,b\right) & \leq\rho\left(c,a\right)
+\rho\left( c,b\right) \\
& <\rho\left( a,m\right) +\rho\left( b,m\right) +2 \rho\left(c,m\right) \leq
1+2\varepsilon\text{.}
\end{align*}
The proof is the same for $y \in abd$. If $y$ belongs $acd$, let $e$ be the
intersection of the line $ay$ with the edge $cd$. Obviously, 
\begin{align*}
\rho\left( y,b\right) +\rho\left( y,a\right) & \leq\rho\left( e,b\right)
+\rho\left( e,a\right) \\
& \leq\max\left( \rho\left( c,b\right) ,\rho\left( d,b\right) \right)
+\max\left( \rho\left( c,a\right) ,\rho\left( d,a\right) \right) \\
& <\rho\left( b,m\right) +\rho\left( a,m\right) +2\max\left( \rho\left(
c,m\right) ,\rho\left( d,m\right) \right) \leq 1+2\varepsilon\text{.}
\end{align*}
The proof is similar for $bcd$. Since $1+2\varepsilon<\sqrt{2}$, we get a
contradiction.
\end{proof}

\begin{lemma}
\label{M3-F3} $\mathrm{int}\left( \mathcal{M}_{3}\setminus\mathcal{F}%
_{3}\right) \neq\varnothing$.
\end{lemma}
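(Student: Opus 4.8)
The plan is to turn Lemma \ref{thintetra} into an open obstruction to membership in $\mathcal{F}_{3}$, and then to produce a single thin tetrahedron that nevertheless lies in $\mathcal{M}_{3}$. Let $U\subset\overline{\mathcal{T}}$ be the set of (possibly degenerate) tetrahedra $T=abcd$ satisfying the \emph{strict} inequalities $\left\Vert c-\frac{a+b}{2}\right\Vert<\frac{1}{2}\left\Vert a-b\right\Vert\sin\frac{\pi}{16}$ and $\left\Vert d-\frac{a+b}{2}\right\Vert<\frac{1}{2}\left\Vert a-b\right\Vert\sin\frac{\pi}{16}$. These are open conditions, so $U$ is open, and its trace on $\mathcal{T}$ is open in $\mathcal{T}$; moreover, by Lemma \ref{thintetra}, every $T\in U$ satisfies $\bigcup_{x\in T}F_{x}=\{a,b\}$, whence $\#F_{x}\leq 2$ for all $x$ and $U\cap\mathcal{F}_{3}=\varnothing$. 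Since $\mathcal{M}_{3}$ is open by Corollary \ref{Open_M}, it suffices to exhibit one element of $U\cap\tilde{\mathcal{M}}_{3}$: near it, the genuine tetrahedra will form a nonempty open subset of $\mathcal{M}_{3}\setminus\mathcal{F}_{3}$.

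To this end I would take the double $D$ of the thin rhombus with vertices $a=(-\frac12,0)$, $b=(\frac12,0)$, $c=(0,h)$, $d=(0,-h)$, where $0<h<\frac12\sin\frac{\pi}{16}$, so that $D\in U$. Let $o=(0,0)$ be the centre, viewed as a point of the front face, and let $o^{-}$ be the corresponding point of the back face. I claim that on $D$ we have $M_{o}=\{a,b,o^{-}\}$, so that $D\in\tilde{\mathcal{M}}_{3}$.

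The angles of the rhombus at $a$ and $b$ are small, so the total angles $\theta_{a},\theta_{b}$ on $D$ are less than $\pi$; hence $a,b$ are very acute leaves of $C(o)$ and belong to $M_{o}$ by Lemma \ref{loc-max}(iv). Conversely $\theta_{c},\theta_{d}$ are close to $2\pi$, so $c,d$ are leaves of nearly vanishing curvature and are not local maxima. The crux is the analysis of $o^{-}$. By Lemma \ref{L2} the cut locus $C(o)$ is one of $H,X,Y,I$; I would show it is the $X$-tree whose single (flat) junction is $o^{-}$ and whose four branches run to $a,b,c,d$. Indeed, a short computation places the foot of the perpendicular from $o$ to each of the four edge-lines strictly inside the corresponding edge, so the four reflections of $o$ across these lines yield four geodesics from $o$ to $o^{-}$; by the reflective symmetry of the rhombus these have equal length and are the shortest connections, so $o^{-}$ has degree four. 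They split the total angle $2\pi$ at the flat point $o^{-}$ into four sectors of sizes $\alpha,\beta,\alpha,\beta$ with $\alpha+\beta=\pi$; each sector is positive and hence smaller than $\pi$, so $o^{-}\in M_{o}$ by Lemma \ref{loc-max}(ii). Since local maxima occur only at leaves or junctions (Lemma \ref{loc-max}(iii)), this gives $M_{o}=\{a,b,o^{-}\}$. Verifying rigorously that these four paths are exactly the shortest connections to $o^{-}$, with no shorter path slipping through the thin waist near $c$ or $d$, is the main obstacle, and is precisely where the explicit coordinates and the symmetry of $D$ are needed.

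Finally, $D\in U\cap\tilde{\mathcal{M}}_{3}$, and $\tilde{\mathcal{M}}_{3}$ is open in $\overline{\mathcal{T}}$ by Corollary \ref{Open_M}, while $U$ is open by construction. Since $D$ is a limit of genuine tetrahedra, there exist non-degenerate $T\in\mathcal{T}$ arbitrarily close to $D$; for such $T$ close enough we have $T\in U\cap\mathcal{M}_{3}$, either directly from the openness of $\mathcal{M}_{3}$ or by applying Lemma \ref{perturb_max} to the three strict local maxima $a,b,o^{-}$ of $\rho_{o}$. As $U\cap\mathcal{M}_{3}$ is open and contained in $\mathcal{M}_{3}\setminus\mathcal{F}_{3}$, we conclude $\mathrm{int}\left(\mathcal{M}_{3}\setminus\mathcal{F}_{3}\right)\neq\varnothing$.
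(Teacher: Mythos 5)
Your argument is correct and shares its skeleton with the paper's proof: both use Lemma \ref{thintetra}, with strict inequalities, as an open obstruction to membership in $\tilde{\mathcal{F}}_{3}$, exhibit a single thin degenerate double lying in $\tilde{\mathcal{M}}_{3}$, and finish via Corollary \ref{Open_M} and the density of genuine tetrahedra. The difference is the witness. The paper doubles a thin \emph{isosceles triangle} $v_{1}v_{2}v_{3}$ and takes $x$ to be the foot $u$ of the altitude from the apex: then $v_{2},v_{3}$ are very acute vertices, and $v_{1}$ is a local maximum because the two copies of the altitude split the total angle $2\theta_{v_{1}}<2\pi$ at $v_{1}$ into two equal angles $\theta_{v_{1}}<\pi$; no cut locus analysis is needed at all. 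Your rhombus witness is heavier precisely because your third maximum is the flat point $o^{-}$, so you must determine \emph{all} segments from $o$ to $o^{-}$ --- the step you flag as the main obstacle. That step does close, and more easily than you fear: any path from $o$ to $o^{-}$ meets the rim at some point $p$, and since the folding map of $D$ onto the rhombus $R$ preserves lengths of paths while $R$ is convex, the path has length at least $2\left\Vert o-p\right\Vert \geq 2\,\mathrm{dist}\left( o,\mathrm{bd}(R)\right) =2h/\sqrt{1+4h^{2}}$, with equality exactly for the four perpendicular unfoldings (paths through $c$, $d$, $a$, $b$ cost $2h$, $2h$, $1$, $1$ respectively, so nothing slips through the waist). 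Hence $o^{-}$ has exactly four segments to $o$, your sector count $\alpha ,\beta ,\alpha ,\beta$ with $\alpha +\beta =\pi$ is right, $C(o)$ is the $X$-tree, and $M_{o}=\{a,b,o^{-}\}$ follows from Lemma \ref{loc-max}. In short, both proofs extract the same content from Lemma \ref{thintetra}; the paper's triangle makes membership in $\tilde{\mathcal{M}}_{3}$ immediate, while your rhombus shows the third local maximum can even be taken at a flat point, at the price of a manageable cut locus computation.
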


\begin{proof}
Consider an isosceles triangle $V=v_{1}v_{2}v_{3}$ such that $%
||v_{1}-v_{2}||=||v_{1}-v_{3}||$ and $||v_{1}-\frac{v_{2}+v_{3}}{2}||<\frac{1%
}{2}\sin\frac{\pi}{16}||v_{1}-v_{3}||$, and denote by $D$ its double.

Denote by $u$ the foot of $v_{1}$ on $v_{2}v_{3}$, and notice that $%
M_u=\{v_{1}, v_{2}, v_{3}\}$. Therefore, $D\in\tilde{\mathcal{M}}_{3}=%
\mathrm{int}\left( \tilde{\mathcal{M}}_{3}\right)$ (see Corollary \ref%
{Open_M}).

By Lemma \ref{thintetra}, $D\in \mathrm{int}\left( \overline{\mathcal{T}}
\setminus \tilde{\mathcal{F}}_{3}\right)$, and the conclusion follows.
\end{proof}

\begin{lemma}
\label{arc} \rmcite{{z-fp}} For any convex surface $S$ and any point $x$
in $S$, $F_{x}$ is contained in a minimal (by inclusion) arc $J_{x}\subset
C(x)$, possibly reduced to a point.
\end{lemma}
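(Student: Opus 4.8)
The plan is to show that the minimal subtree of $C(x)$ spanning $F_x$ is in fact an arc (or a single point). First I would record that $F_x\subset M_x\subset C(x)$ by Lemma~\ref{loc-max}(i), and that $C(x)$ is a tree by Lemma~\ref{basic}(i); hence the union $\Theta$ of all tree-geodesics between pairs of farthest points is a well-defined finite subtree of $C(x)$ containing $F_x$. Since a finite tree is an arc precisely when it has no vertex of degree at least three, the lemma reduces to proving that $\Theta$ has no branch (ramification) point, and I would take $J_x$ to be $\Theta$ itself, minimality being automatic from its construction.

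The second step is a local analysis at a junction point $j$ of $C(x)$. Writing $k\ge 3$ for its degree, Lemma~\ref{basic}(ii) gives $k$ segments from $x$ to $j$, which split the total angle $\theta_j\le 2\pi$ into sectors $\beta_1,\dots,\beta_k$, and by Lemma~\ref{basic}(iv) the cut-locus arc issuing from $j$ into the $i$-th sector bisects $\beta_i$. Unfolding the two segments bounding that sector places two copies $x',x''$ of $x$ in the plane, the arc lies on the perpendicular bisector of $x'x''$, and $\rho_x$ equals the distance to $x'$ along it; a one-line computation then shows that $\rho_x$ \emph{increases} when leaving $j$ along this arc exactly when $\beta_i>\pi$ (that is, when $j$ lies beyond the midpoint of $x'x''$). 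As $\sum_i\beta_i=\theta_j\le 2\pi$, at most one sector is reflex, so \emph{at most one} arc ascends out of $j$. I would also note that on each edge of $C(x)$ the function $\rho_x$ has the form $\sqrt{d^2+s^2}$ in the unfolding, hence is strictly convex in arc length and attains its maximum at an endpoint, consistent with Lemma~\ref{loc-max}(iii).

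The final and decisive step is to exclude a branch point of $\Theta$. Suppose $y$ were one; then at least three branches of $\Theta$ leave $y$, each, by minimality, carrying a farthest point at the common maximal value $M$. By the junction lemma at most one of these branches ascends at $y$, so along at least two of them $\rho_x$ first strictly decreases and yet must climb back to $M$. To rule out this ``valley-then-peak'' pattern in two independent directions, I would pass to the source unfolding of Lemma~\ref{Source_unf}, in which $U$ is star-shaped about $x$ and $\rho_x$ is simply the Euclidean distance to $x$, so that $F_x$ is carried to the set of boundary points of $U$ lying on the circle $S(x,M)$; the goal then becomes to prove that the top super-level sets $\{\rho_x\ge c\}$ of $C(x)$ stay connected and non-branching as $c\nearrow M$, whence their intersection $F_x$ lies on an arc.

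I expect this last connectivity statement to be the main obstacle: the local conditions of the second step alone do not forbid a star-like configuration, namely a junction at a modest value of $\rho_x$ whose several edges dip and then rise to higher leaves, so ruling out branching genuinely requires a global argument tying together the behaviour of the segments from $x$ along the different branches. This is precisely the content of the cited result \cite{z-fp}, and in a self-contained treatment I would derive the connectivity of the top super-level sets from the star-shapedness of the source unfolding together with a continuity and intermediate-value argument along the family of shortest paths emanating from $x$.
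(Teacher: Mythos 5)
The first thing to note is that the paper does not prove this lemma at all: it is quoted, with citation, from Zamfirescu's paper \cite{z-fp}, so there is no internal proof to compare yours against. Judged on its own terms, your attempt is correct in its first two steps but incomplete at exactly the decisive point --- and you say so yourself. The reduction to showing that the minimal subtree $\Theta\subset C(x)$ spanning $F_x$ has no branch point is fine, and the local computation at a junction $j$ is also fine: by Lemma \ref{basic}(iv) each cut-locus arc bisects its sector, $\rho_x$ increases along an arc leaving $j$ exactly when its sector angle exceeds $\pi$, and since the sector angles sum to $\theta_j\le 2\pi$ at most one arc ascends out of $j$.

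The genuine gap is your third step. The local analysis cannot exclude the configuration you yourself name: a junction all of whose branches first descend and then rise again to the common maximal value $M$. Points on the descending parts are not local maxima, so Lemma \ref{loc-max} says nothing about them, and nothing in your steps constrains the behaviour of $\rho_x$ far from $j$. Your proposed remedy --- connectedness of the top super-level sets $\left\{ \rho_x\ge c\right\} \cap C(x)$ as $c\nearrow M$ --- is not a remedy but a restatement: a branch point of $\Theta$ with two descending branches is precisely what produces disconnected top super-level sets, so proving that connectivity is exactly as hard as the lemma itself. Moreover, it is not clear that star-shapedness of the source unfolding delivers it: star-shapedness controls the radial geometry of the segments from $x$, while the statement to be proved concerns the variation of $\rho_x$ \emph{along} the boundary tree $C(x)$, and the ``continuity and intermediate-value argument along the family of shortest paths'' is not sketched in any checkable form. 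So what you have is a correct reduction together with an honest acknowledgement that the remaining global step is the content of \cite{z-fp}; as a self-contained proof it is incomplete, and the missing step is the heart of the theorem rather than a routine verification.
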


\begin{lemma}
\label{dim_F4} $\dim\mathcal{F}_4=4$.
\end{lemma}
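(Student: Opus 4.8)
The plan is to control $\mathcal{F}_4$ through the incidence set $Z=\{(T,x): x\in T,\ \#F_x\ge 4\}$, which lives in the $7$-dimensional total space of pairs $(T,x)$ (by Lemma \ref{dim_T} we have $\dim\mathcal{T}=5$, and each fibre $\{x\in T\}$ is a surface), and to recover $\dim\mathcal{F}_4$ as $\dim\pi(Z)$, where $\pi(T,x)=T$. The key structural input is Lemma \ref{arc}: for each such pair the set $F_x$ lies on a single arc $J_x\subset C(x)$, so its (at least four) points are linearly ordered; I would fix a branch $y_1,\dots,y_4$ of four of them and work locally. I would then prove $\dim\mathcal{F}_4\le 4$ and $\dim\mathcal{F}_4\ge 4$ separately.

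For the upper bound, near a base pair the four chosen points persist as strict local maxima of $\rho_x$ (Lemma \ref{loc-max}(ii) together with the perturbation Lemma \ref{perturb_max}), so on a neighbourhood they depend continuously on $(T,x)$, and the four values $g_i(T,x)=\rho_x(y_i)$ are functions of $(T,x)$, smooth away from degenerate configurations. Membership in $Z$ is then cut out by the three equalities $g_1=g_2=g_3=g_4$. The crucial point is that the three difference functions $g_i-g_1$ ($i=2,3,4$) have independent differentials, so their common zero set is locally a $4$-dimensional manifold; since there are only finitely many combinatorial types of cut locus (Lemma \ref{L2}) and of farthest-point configuration on $J_x$, this exhibits $Z$ as a finite union of sets of dimension at most $4$. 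As $\dim\pi(Z)\le\dim Z\le 4$, we get $\dim\mathcal{F}_4\le 4$; equivalently this shows $\mathcal{F}_4$ has empty interior in $\mathcal{T}$, whence $\dim\mathcal{F}_4\le 4$ by the dimension theorem for subsets of a manifold.

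For the lower bound I would start from a degenerate witness. Taking the double $D$ of a cyclic quadrilateral $Q=v_1v_2v_3v_4$ whose circumcenter $x$ lies in its interior, all four vertices are equidistant from $x$; arranging (as in the regular-hexagon example used to show $\mathrm{int}(\mathcal{M}_4\setminus\mathcal{M}_5)\neq\varnothing$) that they are exactly the local maxima, hence the farthest points, one obtains $D\in\tilde{\mathcal{F}}_4$, and letting $Q$ vary produces a $3$-parameter family of such doubles. The fourth dimension must come from genuine tetrahedra: I would perturb $D$ transversally to the codimension-one locus of doubles inside $\overline{\mathcal{T}}$, solving the three equalities $g_1=\dots=g_4$ by the implicit function theorem to obtain a $4$-dimensional family $Z_0\subset Z$ of non-degenerate tetrahedra, each in $\mathcal{F}_4$. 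Checking that the $x$-partial differential of $(g_2-g_1,g_3-g_1,g_4-g_1)$ has rank $2$ then makes $\pi|_{Z_0}$ finite-to-one, so $\dim\pi(Z_0)=4$ and $\dim\mathcal{F}_4\ge 4$.

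The main obstacle is precisely the transversality input used in both halves: that the three equal-distance conditions are independent (rank $3$ overall) and that they still leave an honest surface of witnesses $x$ (rank $2$ in the $x$-directions). This is delicate because the natural explicit members of $\tilde{\mathcal{F}}_4$ are highly symmetric — it is exactly their symmetry that forces the four distances to coincide — and symmetry is what can make the relevant differentials dependent and can trap the whole solution family inside the degenerate doubles. The heart of the proof is therefore to select a base configuration, together with explicit perturbation directions realized through controlled gluings and unfoldings as in Lemmas \ref{Source_unf}--\ref{Close_unf}, for which one can verify by direct computation of how the four geodesic distances $\rho_x(y_i)$ respond to independent motions of the vertices that these rank conditions hold and that the family genuinely escapes into non-degenerate tetrahedra.
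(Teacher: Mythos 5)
Your proposal is a strategy outline, not a proof: the step you yourself flag as ``the main obstacle'' --- verifying that the three difference functions $g_i-g_1$ have independent differentials, and that the solution family genuinely escapes the locus of doubly covered quadrilaterals --- is exactly the mathematical content of the lemma, and it is nowhere carried out. Without the rank-$3$ condition your upper bound collapses (a priori an equal-distance stratum could fail to drop in codimension), and without the rank-$2$/escape condition your lower bound could produce only the $3$-parameter family of degenerate doubles you start from; your own remark about symmetry trapping the solutions is a real failure mode, not a side worry. There is also a technical gap you do not address: the functions $g_i(T,x)=\rho_x(y_i)$ are not smooth on the incidence space --- at farthest points there are several segments from $x$, and the value of a persisting local maximum is only a Lipschitz (max-type) function of $(T,x)$ --- so before any implicit-function or transversality argument one must stratify by the combinatorial type of the segments and of the cut locus and work with explicit unfoldings on each stratum. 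That stratified, explicit analysis is precisely what your sketch postpones.

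For comparison, the paper's proof goes a different and more concrete route: using Lemma \ref{arc} and Lemma \ref{deg} it rules out $X$-tree and arc cut loci, then classifies the pairs $(T,x)$ with $\#F_x=4$ into three combinatorial types according to whether $C(x)$ is a $Y$-tree (first type) or an $H$-tree with the two segments to the antipodal leaves consecutive (second type) or alternating (third type) at $x$. The families of the first and third types are explicitly described and parameterized in \cite{RS1}, with dimensions $3$ and $4$ respectively, and the second type is handled like the third; the maximum gives $\dim\mathcal{F}_4=4$. In other words, the paper replaces your transversality claims by citing an explicit parameterization of each stratum, which is exactly the ingredient your proposal would need to supply to become a proof. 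If you want to complete your approach independently of \cite{RS1}, the work reduces to constructing, for each combinatorial type, an explicit unfolding (as in the proof of Lemma \ref{F3}) and reading off the number of free parameters --- at which point you have essentially reproduced the paper's argument.
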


\begin{proof}
Consider $T\in\mathcal{F}_4$, $x\in T$ with $\#F_x=4$, and the arc $J_x$
given by Lemma \ref{arc}. Then (see Lemma \ref{loc-max}) $C\left( x\right)$
cannot be an $X$-tree, because this would imply $\#F_x\leq3$. Neither can $%
C\left( x\right)$ be an arc, by Lemma \ref{deg}. (The later case covers
doubly covered quadrilaterals inscribed in a semi-circle centered at $x$,
hence a subfamily of dimension $2$ in $\overline{\mathcal{T}}$.) Therefore,
one of the following statements holds:

\begin{itemize}
\item $C\left( x\right)$ is a $Y$-tree. Then $F_x$, and consequently $J_x$,
contains two leaves of $C\left( x\right)$, as well as its point of degree
three and the vertex of $T$ of degree two in $C\left( x\right)$. Call such a
tetrahedron of \textsl{first type}.

\item $C\left( x\right)$ is a $H$-tree. Then $F_x$ contains the two junction
points of $C\left( x\right)$ and two leaves, separated along $J_x$ by those
junction points.
\end{itemize}

The last case is divided into two subcases. Let's consider the directions at
point $x$ of the four segments between $x$ and the leaves of its cut-locus.
The two segments to the antipodes can be either consecutive (\textsl{second
type}) or alternated with the other two (\textsl{third type}). The families
of tetrahedra of the first and of the third type are explicitly described in 
\cite{RS1}; they have respective dimensions $3$ and $4$. The family of
tetrahedra of the second type is also $4$-dimensional, the proof being
similar to the one for the third case.
\end{proof}

\begin{lemma}
\label{M4-F4} $\mathcal{M}_4 \overset{n.d.}{\supset } \mathcal{F}_4$.
\end{lemma}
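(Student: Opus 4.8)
The plan is to prove the two ingredients of the relation $\mathcal{M}_4 \overset{n.d.}{\supset} \mathcal{F}_4$ separately. The inclusion $\mathcal{F}_4 \subset \mathcal{M}_4$ is already recorded in the diagram of the introduction (indeed $\mathcal{F}_k \subset \mathcal{M}_k$ for all $k$), so the real content is that $\mathcal{F}_4$ is nowhere dense, i.e. $\mathrm{int}\left(\mathrm{cl}\left(\mathcal{F}_4\right)\right) = \varnothing$. Since $\mathcal{M}_4$ is open (Corollary \ref{Open_M}), it is equivalent to show nowhere density in $\mathcal{T}$. The starting point is Lemma \ref{dim_F4}: we have $\dim \mathcal{F}_4 = 4$, strictly below $\dim \mathcal{T} = 5$. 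My aim is to upgrade this codimension-one bound to genuine nowhere density.

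First I would work inside the open dense $5$-manifold $\mathcal{O}$ of Lemma \ref{dim_T}. The proof of Lemma \ref{dim_F4} exhibits $\mathcal{F}_4 \cap \mathcal{O}$ as a finite union of the families of the first, second and third type, each an embedded submanifold of dimension at most $4$. By invariance of domain, a submanifold of dimension $< 5$ has empty interior in $\mathcal{O}$ and is a null set for the $5$-dimensional measure; hence $\mathcal{F}_4$ itself has empty interior. The complement $\mathcal{T} \setminus \mathcal{O}$, being the image of the locus $\mathfrak{I}$ of tetrahedra with two congruent faces, is cut out by one nontrivial equation among the edge lengths and is therefore also of dimension at most $4$, so it contributes no interior either.

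The main obstacle is that nowhere density is a statement about the closure $\mathrm{cl}\left(\mathcal{F}_4\right)$, not about $\mathcal{F}_4$, and a priori a $4$-dimensional family could accumulate onto a fat set. I would remove this obstacle in one of two ways. The clean way is to observe that each of the three families is carved out, in local vertex coordinates, by the prescribed combinatorial type of $C(x)$ together with the equalities between competing geodesic distances that force four points to be simultaneously farthest from $x$; all these relations are real-analytic in the vertices, so each family lies in a proper real-analytic subvariety of dimension at most $4$, and the closure of such a variety is again a proper subvariety of the same dimension, with empty interior. Alternatively, one argues directly: if $T \in \mathrm{cl}\left(\mathcal{F}_4\right)$, choose $T_n \in \mathcal{F}_4$ with $T_n \to T$ and points $x_n$ having four antipodes $y_n^{(1)}, \dots, y_n^{(4)}$; passing to subsequences and using Lemma \ref{LIDCv}, the limits $x$ and $y^{(i)}$ still realize the farthest distance, so $T$ satisfies the limiting analytic relations even when some antipodes coalesce. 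In either case $\mathrm{cl}\left(\mathcal{F}_4\right)$ is contained in a finite union of closed sets of dimension at most $4$ in the $5$-manifold, so $\mathrm{int}\left(\mathrm{cl}\left(\mathcal{F}_4\right)\right) = \varnothing$, which completes the proof.
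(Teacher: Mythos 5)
Your proposal is correct and takes essentially the same route as the paper, whose entire proof is the dimension count you start from: $\dim \mathcal{F}_4 = 4 = \dim\mathcal{T}-1$ (Lemmas \ref{dim_F4} and \ref{dim_T}) together with $\dim\mathcal{M}_4=\dim\mathcal{T}$ from the openness of $\mathcal{M}_4$ (Corollary \ref{Open_M}). Your additional control of $\mathrm{cl}\left(\mathcal{F}_4\right)$ is care that the paper leaves implicit (it relies on the explicit descriptions of the families in the cited work of Rouyer and Sari); of your two variants, the tameness argument is the one that genuinely closes this step, provided it is phrased in the semi-algebraic or subanalytic category rather than via real-analytic varieties --- $\mathcal{F}_4$ is defined by existential quantification over $x$ and the antipodes, hence is a projection, and projections of analytic sets need not lie in proper analytic subvarieties, whereas semi-algebraic sets are closed under projection and satisfy $\dim \mathrm{cl}(S)=\dim S$ --- while your purely limiting argument by itself does not bound the dimension of the limit set once antipodes coalesce.
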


\begin{proof}
By Lemmas \ref{dim_F4} and \ref{dim_T}, $\dim\mathcal{F}_{4} = 4= \dim%
\mathcal{T} -1$, and by Corollary \ref{Open_M}, $\dim\mathcal{M}_4= \dim%
\mathcal{T}$.
\end{proof}


\section{The inclusions $\mathcal{F}_{k} \supset\mathcal{F}_{k+1}$}

\label{Sect_gmax}

In this section we treat the second chain of inclusions in our theorem,
concerning the sets $\mathcal{F}_{k}$.

\begin{lemma}
\label{F2-F3} $\mathrm{int}\left( \mathcal{F}_{2}\setminus\mathcal{F}%
_{3}\right) \neq\varnothing$.
\end{lemma}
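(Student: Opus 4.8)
The plan is to exhibit an explicit open set of tetrahedra, each of which has no point with three or more antipodes. Since $\mathcal{F}_{2}=\mathcal{T}$ (by \cite{MonMien4}), we have $\mathcal{F}_{2}\setminus\mathcal{F}_{3}=\mathcal{T}\setminus\mathcal{F}_{3}$, so it suffices to produce an open subset of $\mathcal{T}$ disjoint from $\mathcal{F}_{3}$. Any tetrahedron $T$ for which $\#F_{x}\leq2$ for every $x\in T$ lies outside $\mathcal{F}_{3}$, so the thin tetrahedra of Lemma \ref{thintetra} are tailor-made for this purpose.

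First I would fix a non-degenerate tetrahedron $T_{0}=abcd$ satisfying the strict inequalities $\left\Vert c-\frac{a+b}{2}\right\Vert <\frac12\left\Vert a-b\right\Vert \sin\frac{\pi}{16}$ and $\left\Vert d-\frac{a+b}{2}\right\Vert <\frac12\left\Vert a-b\right\Vert \sin\frac{\pi}{16}$. Concretely one may take $a=(-\frac12,0,0)$, $b=(\frac12,0,0)$, $c=(0,\delta,0)$, $d=(0,0,\delta)$ for a sufficiently small $\delta>0$: then $\frac{a+b}{2}$ is the origin, $\left\Vert c-\frac{a+b}{2}\right\Vert =\left\Vert d-\frac{a+b}{2}\right\Vert =\delta$, and the four points are non-coplanar, so $T_{0}$ is a genuine tetrahedron. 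For $\delta$ small enough both strict inequalities hold, so Lemma \ref{thintetra} gives $\cup_{x\in T_{0}}F_{x}=\{a,b\}$; in particular $\#F_{x}\leq2$ for every $x$, whence $T_{0}\notin\mathcal{F}_{3}$.

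It remains to upgrade this single example to an open set. The two quantities $\left\Vert c-\frac{a+b}{2}\right\Vert /\big(\frac12\left\Vert a-b\right\Vert \big)$ and $\left\Vert d-\frac{a+b}{2}\right\Vert /\big(\frac12\left\Vert a-b\right\Vert \big)$ are continuous functions of the vertices and are invariant under the group $G$ of affine isometries and homotheties of $\mathbb{R}^{3}$. Hence the set of tetrahedra in $\mathfrak{T}$ satisfying the two strict inequalities is open and is a union of $G$-orbits; its image in $\mathcal{T}=\mathfrak{T}/G$ is therefore open (the quotient map is open) and contains the class of $T_{0}$. Every tetrahedron in this open set still satisfies the (non-strict) hypotheses of Lemma \ref{thintetra}, so for each of them $\cup_{x}F_{x}$ consists of exactly two points and $\#F_{x}\leq2$ for all $x$; consequently the whole open set is disjoint from $\mathcal{F}_{3}$. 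This produces an interior point of $\mathcal{F}_{2}\setminus\mathcal{F}_{3}$, as required.

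I do not foresee a serious obstacle. The only points requiring care are ensuring that the chosen vertices are non-coplanar, so that we stay in $\mathcal{T}$ rather than in the degenerate part of $\overline{\mathcal{T}}$ that Lemma \ref{thintetra} also permits, and verifying that openness descends through the quotient $\mathfrak{T}\rightarrow\mathcal{T}$; both are routine once the $G$-invariance of the defining inequalities is noted.
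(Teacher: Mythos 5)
Your proof is correct and takes essentially the same approach as the paper: the paper's one-line proof likewise observes that the set of tetrahedra satisfying the hypothesis of Lemma \ref{thintetra} with strict inequality is open and contained in $\mathcal{F}_{2}\setminus\mathcal{F}_{3}$. Your extra details (the explicit thin tetrahedron $T_{0}$ and the verification that openness descends through the quotient $\mathfrak{T}\rightarrow\mathcal{T}$) simply make explicit what the paper leaves implicit.
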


\begin{proof}
The set of tetrahedra satisfiying the hypothesis of Lemma 4.1 with a strict
inequality is open and included in $\mathcal{F}_{2}\backslash \mathcal{F}%
_{3} $.
\end{proof}

\bigskip

Notice that $\mathrm{int}\left( \mathcal{F}_{2}\setminus \mathcal{F}_{3}
\right) $ doesn't contain only \textquotedblleft thin\textquotedblright\
tetrahedra, but also the regular one.

\begin{lemma}
\label{F3} $\dim \mathcal{F}_{3} =\dim \mathcal{T}$, hence $%
\mathrm{int}\left( \mathcal{F}_{3}\setminus \mathcal{F}_{4}\right) \neq
\varnothing $.
\end{lemma}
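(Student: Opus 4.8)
The plan is to reduce the statement to producing a nonempty \emph{open} subset of $\mathcal{T}$ contained in $\mathcal{F}_3$. Such a set witnesses $\dim\mathcal{F}_3=\dim\mathcal{T}=5$ at once; and since $\dim\mathcal{F}_4=4$ by Lemmas \ref{dim_F4} and \ref{dim_T}, the set $\mathcal{F}_4$ is nowhere dense in $\mathcal{T}$, so deleting $\mathrm{cl}(\mathcal{F}_4)$ from that open set leaves a nonempty open subset of $\mathcal{F}_3\setminus\mathcal{F}_4$, which gives $\mathrm{int}(\mathcal{F}_3\setminus\mathcal{F}_4)\neq\varnothing$. Thus everything rests on exhibiting one tetrahedron $T_0$ with a point $x_0\in T_0$ such that $\#F_{x_0}=3$ in a manner stable under small perturbations of $T_0$.

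First I would build the seed configuration. By Lemma \ref{arc} the three antipodes must lie on a single arc $J_{x_0}\subset C(x_0)$; consequently a three-fold symmetric triple of antipodes is impossible, since an order-three self-homeomorphism of the surface fixing $x_0$ would have to permute such a triple cyclically along the minimal arc $J_{x_0}$, which no self-map of an interval can do. I would therefore take $T_0$ with a single \emph{reflection} symmetry and choose $x_0$ to be a flat point on the symmetry plane, seeking a farthest-point set consisting of a mirror-fixed point $p_0$ together with a mirror-symmetric pair $p_+,p_-$, all lying on $J_{x_0}$ (with $p_0$ between $p_+$ and $p_-$) and strictly exceeding every other local maximum of $\rho_{x_0}$. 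Existence of such a pair $(T_0,x_0)$ can be secured explicitly, either by an Alexandrov gluing of a symmetric planar polygon in the spirit of the $\tilde{\mathcal{M}}_5$ example, or by starting from a tetrahedron of $\mathcal{F}_4$ and breaking exactly one of its four ties while preserving the reflection symmetry.

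Next I would prove stability by the implicit function theorem. Via the source or star unfolding (Lemmas \ref{Source_unf} and \ref{Star_unf}) the three distance functions $x\mapsto\rho_x(p_0),\rho_x(p_+),\rho_x(p_-)$ become, near $x_0$, smooth functions of $x$ whose gradients are the inward unit directions $-u_0,-u_+,-u_-$ toward the (smoothly moving) regional maxima. Consider $\Phi(T,x)=\bigl(\rho^T_x(p_+)-\rho^T_x(p_0),\ \rho^T_x(p_+)-\rho^T_x(p_-)\bigr)$, which vanishes at $(T_0,x_0)$. Its partial differential in $x$ has rows proportional to $u_+-u_0$ and $u_+-u_-$; as $u_\pm$ are mirror images of one another while $u_0$ lies along the mirror, the second row is nonzero and transverse to the mirror while the first is not parallel to it, so these vectors are linearly independent and $\partial\Phi/\partial x$ is invertible. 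Hence for each $T$ near $T_0$ there is a point $x(T)$ near $x_0$ at which the three distances coincide; by Lemma \ref{LIDCv} the strict gap to the remaining local maxima persists, so these three coinciding values are the global maximum and the three points stay distinct. Therefore $T\in\mathcal{F}_3$ throughout a neighborhood of $T_0$, which is the required open set.

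The main obstacle is this stability step, for two reasons. Unlike the sets $\mathcal{M}_k$, which are open by Corollary \ref{Open_M}, the sets $\mathcal{F}_k$ are controlled by a \emph{global} maximality condition, so openness must be engineered rather than inherited — which is exactly what the reflection symmetry together with the two-dimensional freedom in the choice of $x$ supplies. The delicate points are verifying the transversality (invertibility of $\partial\Phi/\partial x$) and checking that the three candidate maxima remain the genuine global maxima after perturbation; both I would control through the unfolding, where the intrinsic distances become ordinary Euclidean distances and the gradient computation above becomes literal.
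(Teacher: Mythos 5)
Your overall reduction is fine: a nonempty open subset of $\mathcal{T}$ contained in $\mathcal{F}_3$ would give $\dim\mathcal{F}_3=\dim\mathcal{T}$, and deleting $\mathrm{cl}(\mathcal{F}_4)$ (harmless, since $\dim\mathcal{F}_4=4$ by Lemma \ref{dim_F4}) would give $\mathrm{int}(\mathcal{F}_3\setminus\mathcal{F}_4)\neq\varnothing$; this is the same logic the paper uses. The first genuine gap is that your seed $(T_0,x_0)$ is never constructed, only asserted to be "securable explicitly." That existence statement is essentially the whole content of the lemma: the paper's entire proof is an explicit construction --- a polygon with seven boundary points on the unit circle $S(x,1)$ at prescribed circular coordinates and three interior points $v_2,v_3,v_4$, chosen so that Alexandrov's gluing theorem (Lemma \ref{Alexandrov}) produces a tetrahedron whose cut locus $C(x)$ is exactly the glued boundary, followed by a star-shapedness/triangle-inequality argument showing that $\rho^T_x$ equals the planar distance in the unfolding, so that $F_x$ is read off as the three boundary points lying on $S(x,1)$. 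Note also that plausible-looking candidates fail: the doubles of thin isosceles triangles used in Lemma \ref{M3-F3} lie in $\mathcal{M}_3$ but, by Lemma \ref{thintetra}, not in $\mathcal{F}_3$; so producing a seed is not routine.

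The second gap is a mathematical error in the stability step, and it cannot be repaired without the explicit seed anyway. By Lemma \ref{arc} the three antipodes lie on an arc $J_{x_0}\subset C(x_0)$, and an arc in a tree contains at most two leaves of the tree; since your $p_0$ lies \emph{between} $p_+$ and $p_-$ on $J_{x_0}$, it is an interior point of the arc, hence by Lemma \ref{loc-max}(iii) a junction point of $C(x_0)$, joined to $x_0$ by at least two segments (Lemma \ref{basic}(ii)). For such a point, the quantity you differentiate is the value of a moving local maximum of a minimum of several smooth distance branches; when it is differentiable at all (a Danskin/envelope-type statement you would also need to justify), its gradient in $x$ is a \emph{convex combination} of the unit vectors along the several segments from $x_0$ to $p_0$, not a unit vector $-u_0$. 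This matters: for the double of an equilateral triangle with $x$ at the center of a face, the local maximum at the opposite center is reached by three segments and the gradient of its value is $0$. So $g_0$ can have any norm in $[0,1]$; its mirror component can then coincide with that of $u_+$, making $\partial\Phi/\partial x$ singular (e.g., if the segments to $p_\pm$ leave $x_0$ perpendicular to the mirror and $g_0=0$, both rows of your matrix are perpendicular to the mirror). Hence invertibility is not a consequence of the reflection symmetry; it is an open condition on the particular seed, verifiable only on an explicit example --- which returns you to the first gap. The paper sidesteps all of this by making the distance function literally Euclidean on a $5$-parameter family of unfoldings and invoking Lemma \ref{Close_unf}, so that no differentiability of max-values is ever needed.
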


\begin{proof}
We construct a family of tetrahedra in $\mathcal{F}_{3}$ depending on $5$
independent parameters, and thus of maximal dimension in $\mathcal{T}$ (see
Lemma \ref{dim_T}). In fact, we construct next a family of planar and simple
polygonal domains depending on $5$ independent parameters, in such a way
that they glue to tetrahedra. Moreover, the boundary of each such polygonal
domain will correspond, after gluing, to the cut locus of a distinguished
point $x$, so we may think about these domains as being the source
unfoldings of tetrahedra in a $5$-dimensional family. Our construction
directly shows that $\#F_{x}=3$ on each resulting tetrahedron $T$.

\medskip

Consider the unit circle $S(x,1)$ and the points $v_{1},w,u,u^{\prime},u^{%
\prime\prime},w^{\prime},w^{\prime\prime}\in S(x,1)$ having respectively the
following circular coordinates: $0$, $\pi/6$, $4\pi/6$, $5\pi/6$, $13\pi/12$%
, $19\pi/12$, $11\pi/6$. A straightforward computation shows that 
\begin{equation}
\left\{ 
\begin{array}{ccc}
{||v_{1}-w}{||} & {=} & {||v_{1}-w}^{\prime\prime}{||} \\ 
{||w}{-u}{||} & {=} & {||w}^{\prime}{-u^{\prime\prime}}{||}%
\end{array}
\right.  \label{*2}
\end{equation}
and 
\begin{equation*}
\begin{array}{rcl}
{\angle v_{1}wu+\angle u^{\prime\prime}w^{\prime}w^{\prime\prime}+\angle
w^{\prime}w^{\prime\prime}v_{1}} & {=} & {2\pi+\pi/12}, \\ 
{\angle wuu^{\prime}+\angle uu^{\prime}u^{\prime\prime}+\angle
u^{\prime}u^{\prime\prime}w^{\prime}} & {=} & {2\pi+\pi/12}\text{{.}}%
\end{array}%
\end{equation*}
Therefore, there exist points $v_{2},v_{3},v_{4}$ inside the polygon $%
v_{1}wuu^{\prime}u^{\prime\prime}w^{\prime}w^{\prime\prime}$ (see Figure \ref%
{F3-F4}) 
\begin{figure}[t]
\centering
\includegraphics[width=0.45\textwidth]{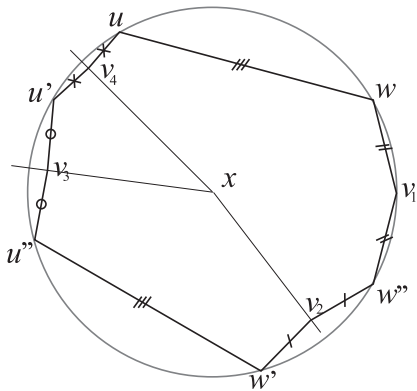}
\caption{Polygonal domain folding to a tetrahedron in $\mathcal{F}%
_{3}\setminus\mathcal{F}_{4}$.}
\label{F3-F4}
\end{figure}
such that 
\begin{equation}
\left\{ 
\begin{array}{ccc}
{||u}{-v_{4}||} & {=} & {||u^{\prime}}{-v_{4}||} \\ 
{||u^{\prime}}{-v_{3}||} & {=} & {||u^{\prime\prime}}{-v_{3}||} \\ 
{||w}^{\prime}{-v_{2}||} & {=} & {||w}^{\prime\prime}{-v_{2}||}\text{{,}}%
\end{array}
\right.  \label{*}
\end{equation}
and, moreover, 
\begin{equation}
\left\{ 
\begin{array}{ccc}
{\angle v_{1}w}{u}{+\angle u}^{\prime\prime}{w}^{\prime}{v_{2}+\angle v_{2}w}%
^{\prime\prime}{v_{1}} & {=} & {2\pi} \\ 
{\angle w}{u}{v_{4}+\angle v_{4}u}^{\prime}{v_{3}+\angle v_{3}u}%
^{\prime\prime}{w^{\prime}} & {=} & {2\pi}\text{{.}}%
\end{array}
\right.  \label{**}
\end{equation}

Let $L$ denote the closed planar set bounded by $v_{1}wuv_{4}u^{%
\prime}v_{3}u^{\prime\prime}w^{\prime}v_{2}w^{\prime\prime}$.

Glue the equal sides of $L$, according to the equalities (\ref{*}) and (\ref%
{*2}). By Alexandrov's gluing theorem (Lemma \ref{Alexandrov}), the result $%
T $ is (isometric to) a polyhedral convex surface. More precisely, the
conditions (\ref{**}) imply that $T$ is a tetrahedron, with vertices $%
v_{1},v_{2},v_{3},v_{4}$.

\medskip

Now fix $v_{1}\in S(x,1)$, and freely perturb the positions of $w$, $u$, $%
u^{\prime}$, $u^{\prime\prime}$ on $S(x,1)$, wherefrom $4$ independent
parameters. These determine, through the above conditions (\ref{*2}), the
positions of $w^{\prime}$, $w^{\prime\prime}$ on $S(x,1)$. The first
equality in (\ref{**}) determines then the position of $v_{2}$ on the
bisector of the angle $\angle w^{\prime}xw^{\prime\prime}$, but the second
inequality does not determine the position of both $v_{3}$ and $v_{4}$.
Hence we have a fifth degree of liberty.

By Alexandrov's gluing theorem, each polygonal domain obtain as above yields
a tetrahedron $T$, and Lemma \ref{Close_unf} shows that the resulting
tetrahedra are close in $\overline{\mathcal{T}}$, because their unfoldings
are close in the plane. Therefore, we have obtained a subfamily $\mathcal{L}$
of dimension $5$ in $\overline{\mathcal{T}}$. Of course, $\mathrm{dim}\left( 
\mathcal{L}\cap \mathcal{T}\right) =5$.

\medskip

We claim that the distance function $\rho_{x}^{T}$ coincides to the distance
function from $x$ in the planar domain $L$ producing $T$. Assume this is not
true, hence there exists a point $y$ in $L$ such that $\rho^T(x,y) < ||x-y||$%
. Let $\gamma$ be a segment on $T$ from $x$ to $y$, and denote by $\bar{%
\gamma}$ the image of $\gamma$ in $L$, under the unfolding of $T$ to $L$.
Because $L$ is star-shaped with respect to $x$, $\bar{\gamma} \cap \mathrm{bd%
}(L) \neq \emptyset$. Consider $z \in \bar{\gamma} \cap \mathrm{bd}(L)$, say 
$z \in uv_4$, hence the point $z^{\prime}\in u^{\prime}v_4$ given by $%
||u-z||=||u^{\prime}-z^{\prime}||$ also belongs to $\bar{\gamma} \cap 
\mathrm{bd}(L)$. Assume, for the simplicity of the presentation, that $\bar{%
\gamma} \cap \mathrm{bd}(L)=\{z,z^{\prime}\}$. (The case $\# \left( \bar{%
\gamma} \cap \mathrm{bd}(L) \right)>2$ follows by a straightforward
induction.) The triangle inequality implies 
\begin{equation*}
||x-y|| < ||x-z^{\prime}|| + ||z^{\prime}-y||= ||x-z|| + ||z^{\prime}-y||=
\rho^T(x,y),
\end{equation*}
and a contradiction is obtained, proving the claim.

The claim shows that $\mathrm{bd}\left( L\right) $ yields $C(x)$ after
gluing, so one can easily check that $\#F_{x}=3$ on $T$, and thus $\mathcal{L%
}\cap\mathcal{T}\subset\mathcal{F}_{3}$.
\end{proof}
\bigskip

Lemmas \ref{M3-F3}, \ref{F2-F3} and \ref{F3} imply the following

\begin{corollary}
$\mathcal{M}_3 \spset \mathcal{F}_3$ and $\mathcal{F}_2 \spset \mathcal{F}_3$.
\end{corollary}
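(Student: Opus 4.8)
The plan is to obtain both relations directly by unwinding the definition of $\spset$ and feeding it the three preceding lemmas. Recall that $A\spset B$ asserts three things simultaneously: the inclusion $A\supset B$, that $B$ has interior points, and that $A\setminus B$ has interior points. So for each of the two claimed relations I must verify exactly these three items.

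First I would dispatch the inclusions, which are definitional. Both $\mathcal{F}_3\subset\mathcal{M}_3$ and $\mathcal{F}_3\subset\mathcal{F}_2$ hold directly from the definitions of the sets: a global maximum of $\rho_x$ is in particular a local maximum, so $\mathcal{F}_k\subset\mathcal{M}_k$, and $\#F_x\geq 3$ implies $\#F_x\geq 2$, so $\mathcal{F}_3\subset\mathcal{F}_2$. These are precisely the inclusions already recorded in the introductory diagram.

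Next, the common ingredient for both relations is that the smaller set $\mathcal{F}_3$ has interior points. This is supplied by Lemma \ref{F3}: there one shows $\mathrm{int}\left(\mathcal{F}_3\setminus\mathcal{F}_4\right)\neq\varnothing$, and since $\mathrm{int}\left(\mathcal{F}_3\setminus\mathcal{F}_4\right)\subset\mathrm{int}\left(\mathcal{F}_3\right)$, we conclude $\mathrm{int}\left(\mathcal{F}_3\right)\neq\varnothing$. Equivalently, the statement $\dim\mathcal{F}_3=\dim\mathcal{T}=5$ from the same lemma already guarantees that $\mathcal{F}_3$ is full-dimensional, hence has nonempty interior.

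Finally I would supply the two "difference" conditions, each of which is exactly one of the earlier lemmas. For $\mathcal{M}_3\spset\mathcal{F}_3$, the set $\mathcal{M}_3\setminus\mathcal{F}_3$ has interior points by Lemma \ref{M3-F3}; for $\mathcal{F}_2\spset\mathcal{F}_3$, the set $\mathcal{F}_2\setminus\mathcal{F}_3$ has interior points by Lemma \ref{F2-F3}. Assembling, for each relation, the inclusion, the interior of $\mathcal{F}_3$, and the relevant interior of the difference yields the two $\spset$ statements. I do not expect any genuine obstacle: all the constructive work lives in the three cited lemmas, and this corollary is the purely logical bookkeeping step that repackages them into the two relations displayed in the main diagram.
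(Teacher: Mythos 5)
Your proposal is correct and matches the paper exactly: the paper states this corollary as an immediate consequence of Lemmas \ref{M3-F3}, \ref{F2-F3} and \ref{F3}, and your write-up is precisely the bookkeeping the paper leaves implicit (definitional inclusions, $\mathrm{int}\left(\mathcal{F}_3\right)\supset\mathrm{int}\left(\mathcal{F}_3\setminus\mathcal{F}_4\right)\neq\varnothing$ from Lemma \ref{F3}, and the two difference sets from Lemmas \ref{M3-F3} and \ref{F2-F3}). One caveat: rely on the containment $\mathrm{int}\left(\mathcal{F}_3\setminus\mathcal{F}_4\right)\subset\mathrm{int}\left(\mathcal{F}_3\right)$ rather than your ``equivalently'' remark, since full dimension alone does not in general guarantee nonempty interior for an arbitrary subset.
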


\begin{lemma}
$\mathcal{F}_{4}\overset{n.d.}{\subset }\mathcal{F}_{3}$.
\end{lemma}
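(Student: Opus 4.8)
The plan is to reduce the statement to a dimension count, exactly as in Lemma~\ref{M4-F4}. First I would note the inclusion $\mathcal{F}_4\subset\mathcal{F}_3$, immediate from the definitions since $\#F_x\ge 4$ forces $\#F_x\ge 3$. Hence only the nowhere-density remains, i.e. $\mathrm{int}\left(\mathrm{cl}\left(\mathcal{F}_4\right)\right)=\varnothing$. For this I would feed in the two dimension computations already available: $\dim\mathcal{F}_4=4$ by Lemma~\ref{dim_F4}, while $\dim\mathcal{F}_3=5=\dim\mathcal{T}$ by Lemma~\ref{F3} together with Lemma~\ref{dim_T}.

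The substance of the proof is to upgrade the strict inequality $\dim\mathcal{F}_4<\dim\mathcal{T}$ to nowhere-density, and for this I would lean on the explicit description obtained inside the proof of Lemma~\ref{dim_F4}: every tetrahedron of $\mathcal{F}_4$ is of first, second, or third type, and these are submanifolds of the $5$-dimensional manifold $\mathcal{O}$ of Lemma~\ref{dim_T}, of dimensions $3$, $4$, and $4$ respectively. Thus $\mathcal{F}_4$ is contained in a finite union of submanifolds of $\mathcal{O}$, each of dimension at most $4$; its closure in $\mathcal{O}$ is therefore again at most $4$-dimensional and so contains no open subset of $\mathcal{O}$. Since $\mathcal{O}$ is open and dense in $\mathcal{T}$, this nowhere-density transfers to $\mathcal{T}$: an open set of $\mathcal{T}$ lying in $\mathrm{cl}\left(\mathcal{F}_4\right)$ would meet $\mathcal{O}$ in a nonempty open set contained in $\mathrm{cl}_{\mathcal{O}}\left(\mathcal{F}_4\right)$, a contradiction.

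The main obstacle is not computational but lies in correctly invoking the structural input of Lemma~\ref{dim_F4}: one must use it in the strong form that $\mathcal{F}_4$ is \emph{exhausted} by finitely many submanifolds of dimension $\le 4$, so that passing to closures cannot raise the dimension to $5$. Once this is granted the nowhere-density is routine. Should ``nowhere dense in $\mathcal{F}_3$'' be intended in the relative topology rather than in $\mathcal{T}$, I would add that $\mathcal{F}_3$ has nonempty interior in $\mathcal{T}$ by the Corollary preceding this lemma, so over that open part the relative and ambient notions coincide and the same dimension gap concludes the argument.
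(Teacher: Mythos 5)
Your argument establishes (essentially correctly, modulo the usual care about closures of the finitely many explicit families) that $\mathrm{cl}\left(\mathcal{F}_{4}\right)$ has empty interior in the ambient space $\mathcal{T}$. But that is not what the statement asserts. By the paper's convention, $A\overset{n.d.}{\supset}B$ means that $B$ is nowhere dense \emph{in} $A$, and the paper's own proof accordingly works with the relative interior $\mathrm{int}_{\mathcal{F}_{3}}\left(\mathrm{cl}\left(\mathcal{F}_{4}\right)\right)$. The distinction is essential here because $\mathcal{F}_{3}$ is not known to be open in $\mathcal{T}$. This is exactly the difference from Lemma \ref{M4-F4}, which you invoke as your model: there the ambient set $\mathcal{M}_{4}$ \emph{is} open (Corollary \ref{Open_M}), so relative and ambient interiors agree on it and the bare dimension count suffices. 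For $\mathcal{F}_{3}$ no such reduction is available: a priori, near some tetrahedron $T\in\mathcal{F}_{4}$ the set $\mathcal{F}_{3}$ could be entirely contained in $\mathrm{cl}\left(\mathcal{F}_{4}\right)$ (for instance, $\mathcal{F}_{3}$ and $\mathcal{F}_{4}$ could locally coincide), in which case $\mathcal{F}_{4}$ would contain a nonempty relatively open subset of $\mathcal{F}_{3}$ and the lemma would fail, even though $\dim\mathcal{F}_{4}=4<5=\dim\mathcal{T}$ remains true. So no ambient dimension gap, however carefully justified, can by itself yield the relative statement. Your fallback remark does not repair this: the fact that $\mathcal{F}_{3}$ has interior points \emph{somewhere} in $\mathcal{T}$ (the Corollary) says nothing about neighborhoods of the points of $\mathrm{cl}\left(\mathcal{F}_{4}\right)$, which is where relative nowhere density must be checked; those interior points could lie far away from $\mathcal{F}_{4}$.

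The missing idea is precisely what occupies most of the paper's proof: a claim that every $T\in\mathrm{cl}\left(\mathcal{F}_{4}\right)$ is a limit of tetrahedra of $\mathrm{int}\left(\mathcal{F}_{3}\right)$. The paper proves it constructively, using the structure from Lemma \ref{dim_F4}: for a second- or third-type tetrahedron with $\#F_{x}=4$, the cut locus $C(x)$ is an $H$-tree; in the source unfolding, radially moving the farthest vertex $v$ on one vertical bar strictly inside the circle of farthest points, while adjusting the other extremity $w$ so that the total angle at the adjacent junction point stays $2\pi$, yields tetrahedra with $\#F_{x}=3$ depending on $5$ parameters (four from the $\mathcal{F}_{4}$ family, one from the radial position of $v$), hence lying in $\mathrm{int}\left(\mathcal{F}_{3}\right)$ and arbitrarily close to $T$. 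Only with this claim in hand does the dimension count finish the job: if $\mathcal{F}_{3}\cap U\subset\mathrm{cl}\left(\mathcal{F}_{4}\right)$ for some open $U$, the claim produces a genuinely open nonempty set $V\cap U\subset\mathrm{cl}\left(\mathcal{F}_{4}\right)$, contradicting $\dim\mathcal{F}_{4}=4$. As written, your proposal proves a weaker statement than the lemma.
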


\begin{proof}
We first claim that any tetrahedron $T\in \mathrm{cl}\left( \mathcal{F}%
_{4}\right) $ is limit of a sequence of tetrahedra of $\mathrm{int}\left(
\mathcal{F}_{3}\right) $. Clearly it is sufficient to consider $T$ in a dense subset
of $\mathcal{F}_{4}$, so we can assume that $T$ is of second or third type
(see Lemma \ref{dim_F4} and \cite{RS1}). Let $x$ be a point on $T$ with $4$
farthest points. The cut-locus of $x$ is an $H$-tree; let $v$, $w$ be the
extremities of one of the vertical bars of the $H$. Exactly one of those
vertices, say $v$, belongs to $F_{x}$. Denote by $p$ the point of degree three
of $C\left( x\right) $ adjacent to $v$ and $w$. Consider the source
unfolding of $T$ with respect to $x$. All points of $F_x$ lie on a circle centered %
at $x$ and of radius $\rho(x,F_x)$, and $w$ is inside this circle. Now, if one radially
moves $v$ inside the circle, and radially moves $w$ in such a way that the
total angle around $p$ remains $2\pi $, he obtains a new tetrahedron such
that $\#F_{x}=3$. This tetrahedron can be deformed with $5$ degrees of
liberty: four of them come from the fact that $\mathcal{F}_{4}$ is $4$%
-dimensional \cite{RS1}, and the fifth one is the radial position of $v$; %
see, {\it e.g.}, the proof of Lemma \ref{F3}.
This proves the claim. 

Assume now that there exists $T\in \mathrm{int}_{\mathcal{F}_{3}}\left( 
\mathrm{cl}\left( \mathcal{F}_{4}\right) \right) $. In other words, there
exists an open (in $\mathcal{T}$) set $U$ containing $T$ such that $\mathcal{%
F}_{3}\cap U\subset \mathrm{cl}\left( \mathcal{F}_{4}\right) $. By the
claim, one can find a tetrahedron $T^{\prime }\in \mathrm{int} \left(\mathcal{F}_3\right) \cap U$, %
{\it i.e.\,}, there exists an open set $V$ such that $T^{\prime }\in V\subset 
\mathcal{F}_{3}$. Now $V\cap U$ is open, nonempty, and included in $\mathrm{%
cl}\left( \mathcal{F}_{4}\right) $, in contradiction with $\dim %
\mathcal{F}_{4} =4$.
\end{proof}

\bigskip

\noindent\textbf{Acknowledgement. } This work was supported by the grant
PN-II-ID-PCE-2011-3-0533 of the Romanian National Authority for Scientific
Research, CNCS-UEFISCDI.


{\small \bigskip}

{\small Jo\"{e}l Rouyer }

{\small \noindent Institute of Mathematics ``Simion Stoilow'' of the
Romanian Academy \newline
P.O. Box 1-764, Bucharest 014700, ROMANIA \newline
Joel.Rouyer@imar.ro }

{\small \bigskip}

{\small Costin V\^\i lcu }

{\small \noindent Institute of Mathematics ``Simion Stoilow'' of the
Romanian Academy \newline
P.O. Box 1-764, Bucharest 014700, ROMANIA \newline
Costin.Vilcu@imar.ro }

\end{document}